\newtheorem{theorem}{Theorem}[section]
\newtheorem{lemma}[theorem]{Lemma}
\theoremstyle{Corollary}
\newtheorem{corollary}[theorem]{Corollary}
\newtheorem{proposition}[theorem]{Proposition}
\newtheorem{definition}[theorem]{Definition}
\newtheorem{remark}[theorem]{Remark}
\providecommand{\U}[1]{\protect\rule{.1in}{.1in}}
\theoremstyle{definition}
\theoremstyle{remark}
\numberwithin{equation}{section}
\begin{document}
\title[Legendrian mean curvature flow]{Legendrian mean curvature flow in $\eta$-Einstein Sasakian manifolds}
\author{Shu-Cheng Chang$^{1\ast}$}
\address{$^{1}$Department of Mathematics, National Taiwan University, Taipei 10617,
Taiwan and Mathematical Science Research Center, Chongqing University of
Technology, 400054, Chongqing, P.R. China}
\email{scchang@math.nthu.edu.tw }
\author{Yingbo Han$^{2}$}
\address{$^{2}$School of Mathematics and Statistics, Xinyang Normal University,
Xinyang,464000, Henan, P. R. China}
\email{yingbohan@163.com}
\author{Chin-Tung Wu$^{3\ast}$}
\address{$^{3}$Department of Applied Mathematics, National Pingtung University,
Pingtung 90003, Taiwan}
\email{ctwu@mail.nptu.edu.tw }
\thanks{$^{\ast}$Research supported in part by NSC. $^{2}$Yingbo Han is partially
supported by an NSFC 11971415 and Nanhu Scholars Program for Young Scholars of
{Xinyang Normal University}.}
\subjclass{Primary 53C44, Secondary 53C56.}
\keywords{Isotopic problem, $\eta$-Einstein Sasakian metric, Legendrian mean curavture
flow, minimal Legendrian submanifold.}
\maketitle

\begin{abstract}
Recently, there are a great deal of work done which connects the Legendrian
isotopic problem with contact invariants. The isotopic problem of Legendre
curve in a contact $3$-manifold was studies via the Legendrian curve
shortening flow which was introduced and studied by K. Smoczyk. On the other
hand, in
%TCIMACRO{\QTR{frametitle}{the SYZ Conjecture}}%
%BeginExpansion
the SYZ Conjecture%
%EndExpansion
,
%TCIMACRO{\QTR{frametitle}{one can model a special Lagrangian singularity
%locally as the }}%
%BeginExpansion
one can model a special Lagrangian singularity locally as the
%EndExpansion
special Lagrangian cones in $\mathbb{C}^{3}$. This can be characterized by its
link%
%TCIMACRO{\QTR{frametitle}{\ which is }}%
%BeginExpansion
\ which is
%EndExpansion
a minimal Legendrian surface in the $5$-sphere. Then in these points of view,
in this paper we will focus on the existence of the long-time solution and
asymptotic convergnce along the Legendrian mean curvature flow in higher
dimensional $\eta$-Einstein Sasakian $(2n+1)$-manifolds under the suitable
stability condition due to the Thomas-Yau conjecture.

\end{abstract}

\section{Introduction}

Legendrian submanifolds of contact manifolds and Lagrangian submanifolds of
symplectic manifolds are related by symplectization. More precisely, it is
proved in \cite{h} that for an $n$-dimensional submanifold $L$ of
$\mathbb{S}^{2n+1}$, the cone $C(L)\backslash\{0\}$ is Lagrangian in
$\mathbb{C}^{n+1}$ if and only if $L$ is Legendrian. Furthermore, there is a
$1$-$1$ correspondence between minimal Lagrangian cones in $\mathbb{C}^{n+1}$
and minimal Legendrian submanifolds in $\mathbb{S}^{2n+1}$ with the canonical
contact metric structure.

In
%TCIMACRO{\QTR{frametitle}{the SYZ Conjecture (\cite{syz}), in order to deal
%with }}%
%BeginExpansion
the SYZ Conjecture (\cite{syz}), in order to deal with
%EndExpansion
the difficulty which states that most of the special Lagrangian tori fibration
have singularities, one can model them locally as special Lagrangian cones
$C\subset\mathbb{C}^{3}$. Such a cone can be characterized by its link
\[
C\cap\mathbb{S}^{5}%
\]
which is a Legendrian surface. Then the link over a special Lagrangian
singularity is a minimal Legendrian surface in the sphere $\mathbb{S}^{5}$.

On the other hand, the standard Sasakian sphere $(\mathbb{S}^{2n+1},\alpha)$
is the compact contactization of the K\"{a}hler projective space
$(\mathbb{CP}^{n},\omega)$. Then the projection of the mean curvature vector
field of a Legendrian submanifold $L$ in the contactization of a K\"{a}hler
manifold $Z$ coincides with the mean curvature of the projected (immersed)
Lagrangian submanifold in $Z$. In particular, the projection of a minimal
Legendrian submanifold in the contactization is an immersed minimal Lagrangian
submanifold in $Z$.

We first mention the Legendrian isotopic problem which connects with a great
deal of work done in the area of contact invariants. It is known that any
closed $n$-manifold $L$ embeds in the standard contact space $\mathbb{R}%
^{2n+1}$ and are isotopic for any two such embeddings with $n\geq2.$ Moreover,
it is proved by Eliashberg-Fraser \cite{ef} that any Legendrian trivial knot
in a tight contact $3$-manifold with the same rotation number and
Thurston-Bennequin invariant are Legendrian isotopic. Furthermore, it is
observed by Colin-Giroux-Honda \cite{cgh} that for any contact $3$-manifold if
you fix a Thurston-Bennequin invariant, a rotation number, and a knot type,
there are only finitely many Legendrian knot types realizing this data. For
any $n>1,$ by the $h$-principle and constructing from immersion Lagrangian
submanifolds of $\mathbb{C}^{n},$ Ekholm-Etnyre-Sullivan \cite{ees} showed
that there is an infinite family of Legendrian embeddings of the $n$-sphere
($n$-torus) into $\mathbb{R}^{2n+1}$ that are not Legendrian isotopic even
though they have the same classical rotation number and Thurston-Bennequin invariant.

In view of previous observations, it is important but also very difficult to
construct a minimal Legendrian submanifold. In this paper, we will focus on
the existence problem of minimal Legendrian submanifolds of $\eta$-Einstein
Sasakian manifold through deformation of the mean curvature flow.

The mean curvature flow is perhaps the most important geometric evolution
equation of submanifolds in Riemannian and K\"{a}hler manifolds. Intuitively,
a family of smooth immersions $F_{t}:L\rightarrow(M,g)$ such that
\begin{equation}%
\begin{array}
[c]{c}%
\frac{d}{dt}F_{t}=H
\end{array}
\label{M}%
\end{equation}
where $H$ is the mean curvature vector of the immersion along $L_{t}=F_{t}(L)
$. It is the negative gradient flow of the volume functional of $L_{t}$ and
it's stationary solutions are minimal submanifolds. There are many results on
this equation which belongs to the most important equations in Geometric
Analysis. This evolution equation is one of the most important equations in
geometric analysis. There are many results on this equation. For the case of
hypersurfaces, we refer to \cite{hp} for a survey. Recently, the most
interesting class in higher codimension is the Lagrangian mean curvature flow
which preserves the class of Lagrangian submanifolds if the ambient manifold
is a K\"{a}hler-Einstein manifold (\cite{s1}). We refer to \cite{w} and
references therein.

In the case of Legendrian $n$-submanifolds of Sasakian $(2n+1)$-manifolds, in
contrast to the Lagrangian mean curvature flow, the mean curvature flow does
not preserve the Legendrian condition. Then it is the first paper by Smoczyk
\cite{s2} to consider a modified flow called the Legendrian mean curvature
flow which preserves the Legendrian condition if the ambient manifold is an
$\eta$-Einstein Sasakian manifold. The main idea is to minimize the volume
energy in the class of Legendrian immersions and then establish such a
Legendrian mean curvature flow for Legendre submanifolds. \

It was conjectured by Thomas-Yau \cite{ty} that for a compact Lagrangian
submanifold with the zero Maslov class in a Calabi-Yau K\"{a}hler manifold,
under some stability conditions, the Lagrangian mean curvature flow exists for
all time and converges to a special Lagrangian submanifold in its hamiltonian
deformation class. It is constructed by Neves \cite{ne1} that there is a
normalized monotone Lagrangian $L$ in $\mathbb{C}^{2}$ which is Hamiltonian
isotopic to a Clifford torus $\Sigma$ but the Lagrangian mean curvature flow
starting at $L$ develops a finite time singularity. However, Recently Evans
\cite{e} proved a Thomas-Yau type conjecture for\ monotone Lagrangian tori
satisfying a symmetry condition in the complex projective plane $\mathbb{CP}%
^{2}$. More precisely, it is proved that such tori exist for all time under
Lagrangian mean curvature flow with surgery, undergoing at most a finite
number of surgeries before flowing to a minimal Clifford torus in infinite time.

In this paper, we will focus questions on the long-time solution and
asymptotic convergence along the Legendrian mean curvature flow (\ref{27})
under suitable stability conditions. Firstly, it is important to know the
long-time behavior of the flow.

\begin{theorem}
\label{12} Let $F_{0}:L_{0}\rightarrow(M,\lambda,g,\mathbf{T},J)$ be a compact
Legendrian submanifold with the exact mean curvature form $H_{0}$ in an $\eta
$-Einstein Sasakian manifold $M$ which is either compact or bounded curvature.
Assume that the Legendrian mean curvature flow (\ref{27}) admits a smooth
solution on a maximal time interval $[0,T)$, $0<T\leq\infty$. Then the
following is true :\newline

\begin{enumerate}
\item Assume that there exists a positive constant $C_{0}$ such that
\begin{equation}%
\begin{array}
[c]{c}%
\max_{L_{t}}|A|^{2}\leq C_{0},\text{\ for all\ }t\in\lbrack0,T),
\end{array}
\label{31}%
\end{equation}
where $|A|^{2}$ is the squared norm of the second fundamental tensor $A$. Then
there exists a positive constant $C_{m}$ such that
\begin{equation}%
\begin{array}
[c]{c}%
\max_{L_{t}}|\nabla^{m}A|^{2}\leq C_{m},\text{\ for all\ }t\in\lbrack0,T).
\end{array}
\label{32}%
\end{equation}

\item If $T<\infty$, then
\begin{equation}%
\begin{array}
[c]{c}%
\lim_{t\rightarrow T}\max_{L_{t}}|A|^{2}=\infty.
\end{array}
\label{33}%
\end{equation}

\item If in addition to $(1)$ the initial mean curvature form of $L_{0}$ is
exact, and the induced Riemannian metrics $g_{ij}(x,t)$ on $L$ are all
uniformly equivalent, then $T=\infty$ and the Legendrian submanifolds $L_{t} $
converge smoothly and exponentially to a smooth compact minimal Legendrian
immersion $L_{\infty}\subset M$.
\end{enumerate}
\end{theorem}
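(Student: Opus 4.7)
The plan is to establish (1)--(3) in turn, with (2) falling out of (1) by contradiction and (3) following from (1)--(2) together with a decay estimate for the mean curvature.

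For part (1), I would derive Simons-type evolution equations for the covariant derivatives of the second fundamental form. Along the Legendrian mean curvature flow the second fundamental form satisfies schematically
\[
\partial_t A = \Delta A + A \ast A \ast A + \mathcal{R} \ast A + \nabla \mathcal{R},
\]
where $\mathcal{R}$ packages the ambient curvature of the $\eta$-Einstein Sasakian manifold $(M,\lambda,g,\mathbf{T},J)$ and $\ast$ denotes a metric contraction. Differentiating $m$ times and commuting $\partial_t$ with $\nabla$ yields
\[
\partial_t |\nabla^m A|^2 \leq \Delta |\nabla^m A|^2 - 2|\nabla^{m+1} A|^2 + C \!\!\sum_{i+j+k=m}\!\! |\nabla^i A|\,|\nabla^j A|\,|\nabla^k A|\,|\nabla^m A| + \text{(lower order)}.
\]
Starting from the hypothesis $|A|^2 \leq C_0$ and applying Hamilton-style interpolation for covariant derivatives on $L_t$, one inducts on $m$ and invokes the maximum principle (valid because $L_t$ is compact or $M$ has bounded geometry) to deduce (1.2).

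Part (2) I argue by contradiction. If $T<\infty$ but $\max_{L_t}|A|^2$ remains bounded up to $T$, then the derivative estimates of (1) apply and the evolution $\partial_t g_{ij} = 2\langle H, A_{ij}\rangle$ shows the induced metrics are mutually comparable on $[0,T)$. These two ingredients give $C^\infty$ subconvergence of $F_t$ to a smooth Legendrian immersion $F_T$ as $t \nearrow T$; restarting Smoczyk's Legendrian mean curvature flow from $F_T$ (short-time existence being available in any $\eta$-Einstein Sasakian ambient) extends the solution beyond $T$ and contradicts the maximality of $T$.

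For part (3), part (2) forces $T=\infty$, so only the convergence remains. Exactness of the mean curvature form is preserved by Smoczyk's flow in an $\eta$-Einstein Sasakian manifold, so on each $L_t$ one writes $H_t = d\phi_t$ for a Legendrian potential $\phi_t$. A heat-type evolution of $|H|^2$ together with integration by parts --- using the uniform Poincar\'e constant supplied by the metric-equivalence hypothesis --- should yield
\[
\frac{d}{dt}\int_{L_t}|H|^2\, d\mu \leq -2\delta \int_{L_t}|H|^2\, d\mu
\]
for some $\delta>0$, hence exponential $L^2$ decay of $|H|^2$. Interpolating against the uniform $C^k$ bounds from (1) promotes this to $C^k$-decay for every $k$, so $\partial_t F_t = H_t \to 0$ exponentially in $C^\infty$ and $F_t$ is Cauchy, converging smoothly to a limit $F_\infty$ with $H_\infty \equiv 0$. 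Closedness of the Legendrian condition under $C^1$-convergence identifies the limit as a smooth compact minimal Legendrian immersion. The main obstacle will be isolating the decay rate $\delta>0$: unlike the Lagrangian mean curvature flow in a Calabi--Yau ambient, where the Lagrangian angle is a globally defined scalar obeying a clean heat equation, the Legendrian potential $\phi_t$ here carries curvature and Reeb-direction contributions coming from the $\eta$-Einstein data, and showing that a uniform spectral-gap estimate on $L_t$ dominates these extra zeroth-order terms is the delicate analytic point.
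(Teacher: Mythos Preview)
Your treatment of parts (1) and (2) matches the paper's: evolution inequalities for $|\nabla^m A|^2$ of exactly the schematic form you write (the paper cites Huisken's argument), inductive use of the maximum principle, and the standard continuation argument for (2).

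For part (3) your route diverges from the paper's, and the obstacle you flag at the end is real and is not overcome by your proposal. The $L^2$ identity along the flow reads (with $H=d\alpha$)
\[
\frac{d}{dt}\int_{L_t}|H|^2\,d\mu = \int_{L_t}\bigl[-2(\Delta\alpha)^2 + 2(K+2)|H|^2 + 2H_iH_jH_kh^{kij} - |H|^4\bigr]\,d\mu,
\]
so to extract $\frac{d}{dt}\int|H|^2 \leq -2\delta\int|H|^2$ via $\int(\Delta\alpha)^2 \geq \lambda_1\int|H|^2$ you need $\lambda_1 > K+2$ uniformly in $t$, together with smallness of $|H|$ to absorb the cubic term. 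Neither is assumed in part (3): only $|A|$ bounded and metrics uniformly equivalent are given. The uniform Poincar\'e constant you invoke bounds $\lambda_1$ from below by some positive number, but gives no reason for that number to exceed $K+2$. So the $L^2$ approach does not close here; it is exactly this extra spectral hypothesis that the paper's later Theorems impose.

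The paper instead works pointwise with the Legendrian angle. The ``curvature and Reeb-direction contributions'' you worry about are, in the $\eta$-Einstein case, just the single zeroth-order term in
\[
\partial_t\alpha = \Delta_t\alpha + (K+2)\alpha,
\]
so $\beta := e^{-(K+2)t}\alpha$ solves the \emph{pure} heat equation $\partial_t\beta = \Delta_t\beta$. Under the metric-equivalence and $|A|$ bounds, Cao's differential Harnack inequality for positive heat solutions applies (to $\beta-\inf\beta(0)$) and yields $\mathrm{osc}\,\beta(t)\leq C_5 e^{-C_6 t}$ with $C_6>0$ coming from the Harnack constants, \emph{not} from any spectral gap. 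For $K+2\leq 0$ this gives $\mathrm{osc}\,\alpha(t)\leq c\,e^{(K+2-C_6)t}\to 0$, hence $|\alpha-\inf\alpha(0)|$ decays exponentially; a pointwise interpolation $|\nabla\varphi|^2\leq C|\varphi|(|\nabla^2\varphi|+|\nabla\varphi|)$ with $\varphi=\alpha-\inf\alpha(0)$ then converts this into pointwise exponential decay of $|H|$, and integration of $\frac{d}{dt}F=\mathbf{H}-2\alpha\mathbf{T}$ gives the convergence. For $K+2>0$ the paper appeals to a separate result of Chen--He. The key point is that the decay rate is produced by Harnack rather than by a first-eigenvalue gap, and this is precisely what resolves the ``delicate analytic point'' you identified.
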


Now we recall a Legendrian stability result from the paper of Ono \cite{ono1}.
Let $(M^{2n+1},\eta,\xi,\Phi,\overline{g})$ be an $\eta$-Einstein Sasakian
$(2n+1)$-manifold with the $\eta$-Einstein constant $K+2$ (Definition
\ref{d3}). If $K+2\leq0$, then any compact minimal Legendrian submanifold $L$
of $M$ is stable and strictly stable if $K+2<0$. Moreover, if $K+2>0,$ then
$L$ is Legendrian stable if and only if
\begin{equation}
\lambda_{1}(L)\geq K+2,\label{2023}%
\end{equation}
where $\lambda_{1}(L)$ is the first positive eigenvalue of the Hodge-Laplacian
$\Delta_{d}$ acting on $C^{\infty}(L)$ with respect to the induced metric in
$L_{0}.$

\begin{theorem}
\label{23} Let $(M^{2n+1},\eta,\xi,\Phi,\overline{g})$ be an $\eta$-Einstein
Sasakian $(2n+1)$-manifold with the $\eta$-Einstein constant $K+2<0$ and
$L_{0}$ be a compact Legendrian submanifold smoothly immersed in $M$. For any
positive constants $V_{0},$ $\Lambda_{0}>0$, there exists a small constant
$\epsilon_{0}=\epsilon_{0}(V_{0},\Lambda_{0},K_{5},K+2,\iota_{0})>0$ such that
if $L_{0}$ satisfies
\[%
\begin{array}
[c]{c}%
\text{\textrm{Vol}}(L_{0})\leq V_{0},\text{\ }|A|\leq\Lambda_{0},\text{\ }%
\int_{L_{0}}|H|^{2}d\mu\leq\epsilon_{0},
\end{array}
\]
where $A$ is the second fundamental form of $L$ in $M$, $\iota_{0}$ is the
lower bound of the injectivity radius of $(M,\overline{g})$ and $K_{5}=%
%TCIMACRO{\tsum \limits_{k=0}^{5}}%
%BeginExpansion
{\textstyle\sum\limits_{k=0}^{5}}
%EndExpansion
\sup|\bigtriangledown^{k}\overline{Rm}|<\infty.$ Then the Legendrian mean
curvature flow (\ref{27}) with the exact mean curvature form $H_{0}$ for $t=0$
will converge exponentially fast to a minimal Legendrian submanifold in $M$.
\end{theorem}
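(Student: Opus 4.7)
The plan is a small-energy continuity/bootstrap argument exploiting the strict Legendrian stability provided by $K+2<0$, in the spirit of the Thomas--Yau program and of the small-initial-data convergence results for Lagrangian mean curvature flow in K\"{a}hler--Einstein manifolds.

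First I would set up the evolution equations. Because $H_{0}$ is exact and Smoczyk's Legendrian mean curvature flow in an $\eta$-Einstein Sasakian ambient preserves exactness, at every time there is a globally defined Legendrian potential $\phi_{t}:L\to\mathbb{R}$ with $H_{t}=d\phi_{t}$, satisfying a scalar parabolic equation
\[
\bigl(\tfrac{\partial}{\partial t}-\Delta_{t}\bigr)\phi=(K+2)\phi+c(t),
\]
where $c(t)$ is chosen to keep $\int_{L_{t}}\phi\, d\mu=0$. Correspondingly the mean curvature form satisfies a Simons--Bochner inequality
\[
\bigl(\tfrac{\partial}{\partial t}-\Delta\bigr)|H|^{2}\le -2|\nabla H|^{2}+2(K+2)|H|^{2}+C(K_{5})|A|^{2}|H|^{2},
\]
and the intrinsic geometry is governed by $\partial_{t}g_{ij}=2\langle H,A_{ij}\rangle$ and $\partial_{t}d\mu=-|H|^{2}d\mu$. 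The strictly negative linear term $2(K+2)|H|^{2}$ supplies the spectral gap that will drive the whole argument.

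The main step is a bootstrap. Let $T^{\ast}\in(0,T]$ be maximal such that $|A|\le 2\Lambda_{0}$ on $[0,T^{\ast})$. On this interval the energy identity coming from the $\phi$-equation reads
\[
\tfrac{d}{dt}\int_{L_{t}}\phi^{2}d\mu\le -2\int_{L_{t}}|H|^{2}d\mu+2(K+2)\int_{L_{t}}\phi^{2}d\mu,
\]
and combined with a uniform Poincar\'{e} inequality $\int_{L_{t}}\phi^{2}d\mu\le \lambda_{1}(L_{t})^{-1}\int_{L_{t}}|H|^{2}d\mu$, whose constant is controlled in terms of $V_{0}$, $\Lambda_{0}$, $K_{5}$ and $\iota_{0}$ via Cheeger--Gromov compactness, yields exponential decay
\[
\int_{L_{t}}|H|^{2}d\mu\le C\epsilon_{0}e^{-\mu t},\qquad \mu=\mu(\lambda_{1},K+2)>0.
\]
A parabolic Moser iteration applied to the $|H|^{2}$ inequality, using the Michael--Simon--Sobolev inequality on $L_{t}$, upgrades this to pointwise exponential decay of $|H|$. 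Feeding the now-small $|H|$ into the Simons evolution for $|A|^{2}$, a second Moser iteration gives $|A|\le \tfrac{3}{2}\Lambda_{0}$ strictly below the bootstrap barrier $2\Lambda_{0}$, so $T^{\ast}=T$.

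Once the bootstrap is closed, $|A|$ stays uniformly bounded and Theorem \ref{12}(2) forces $T=\infty$; the pointwise decay of $|H|$ together with $\partial_{t}g_{ij}=2\langle H,A_{ij}\rangle$ renders the metrics $g_{t}$ uniformly equivalent; Theorem \ref{12}(3) then delivers smooth convergence to a minimal Legendrian limit $L_{\infty}$; and interpolation between the exponential $L^{2}$-decay of $H$ and the higher-derivative bounds of Theorem \ref{12}(1) upgrades this to smooth exponential convergence. The principal obstacle I expect is the derivation of a \emph{uniform} Poincar\'{e} constant and a \emph{uniform} Michael--Simon--Sobolev constant on $L_{t}$ while the metric is still evolving; this is where the injectivity radius $\iota_{0}$ and the full ambient curvature bound $K_{5}$ enter the dependence of $\epsilon_{0}$, together with the need to close the bootstrap before $|A|$ has a chance to grow past $2\Lambda_{0}$.
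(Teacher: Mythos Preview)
Your overall architecture---bootstrap on $|A|$, exponential $L^{2}$-decay of $H$, upgrade to $L^{\infty}$, close the bootstrap, then invoke Theorem \ref{12}---matches the paper's. But two of your specific mechanisms are off, and one of them is a genuine gap.

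First, the detour through $\phi$ and a Poincar\'e inequality is unnecessary when $K+2<0$. The $L^{2}$-decay of $H$ is immediate from the direct identity (Lemma \ref{11}, equation (\ref{44}))
\[
\frac{d}{dt}\int_{L_t}|H|^{2}\,d\mu
=\int_{L_t}\bigl[-2(\Delta\alpha)^{2}+2(K+2)|H|^{2}+2H_iH_jH_kh^{kij}-|H|^{4}\bigr]\,d\mu
\le 2\bigl(K+2+\Lambda\epsilon\bigr)\int_{L_t}|H|^{2}\,d\mu,
\]
with no spectral input required. Relatedly, the pointwise Bochner inequality you wrote for $|H|^{2}$ is not correct here: in an $\eta$-Einstein ambient the Ricci term in (\ref{j}) vanishes on the contact distribution, and the actual pointwise evolution is (\ref{H}), with coefficient $+4|H|^{2}$, not $+2(K+2)|H|^{2}$. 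The favourable constant $K+2$ appears only after integration by parts. You can still pass from $L^{2}$ to $L^{\infty}$, but not by reading the sign off the pointwise equation.

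Second---and this is the real gap---your plan to close the $|A|$-bootstrap by ``Moser iteration on the Simons evolution for $|A|^{2}$'' does not work as stated. In (\ref{A}) the dominant nonlinearity is of size $|A|^{4}$ and carries no factor of $H$; moreover $-2(K+2)|A|^{2}>0$ here. Under the bootstrap $|A|\le 2\Lambda_{0}$ you only obtain $(\partial_t-\Delta)|A|^{2}\le C(\Lambda_{0},K_{1})$, and Moser iteration gives nothing better than the assumed bound. The paper closes the bootstrap by a different route: it uses the \emph{raw} time derivative of $h_{ijk}$ from (\ref{3}), which after $H=-\nabla f$ yields (see (\ref{65}))
\[
\frac{\partial}{\partial t}|A|\le |\nabla^{2}H|+c(n)|A|^{2}|H|+(|\overline{Rm}|+3)|H|.
\]
Every term on the right carries a factor of $H$; once $|H|$ and $|\nabla^{2}H|$ decay exponentially, integrating in $t$ gives $|A|(t)\le |A|(\tau)+C\epsilon_{0}^{\theta}\le 3\Lambda_{0}$. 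To obtain pointwise decay of $|H|$ and $|\nabla^{2}H|$ from their $L^{2}$ decay the paper does not use Moser or Michael--Simon; it uses the elementary Lemma \ref{13} (on a $\kappa$-noncollapsed manifold, $|\nabla S|\le\Lambda$ and $\int|S|^{2}\le\epsilon$ force $\max|S|\le(\kappa^{-1/2}+\Lambda)\epsilon^{1/(n+2)}$), after manufacturing noncollapsing from the ambient injectivity-radius bound $\iota_{0}$ via the Chen--He short-time estimate and the Gauss equation. That is where $\iota_{0}$ and $K_{5}$ actually enter. If you replace your Simons/Moser step by the ODE inequality (\ref{65}) and trade Moser iteration for Lemma \ref{13} plus noncollapsing, your outline becomes essentially the paper's proof.
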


For the ambient $\eta$-Einstein Sasakian $(2n+1)$-manifold with the $\eta
$-Einstein constant $K+2\geq0,$ we have the following:

\begin{theorem}
\label{Thm 3.1} Let $(M^{2n+1},\eta,\xi,\Phi,\overline{g})$ be an $\eta
$-Einstein Sasakian $(2n+1)$-manifold with the $\eta$-Einstein constant
$K+2\geq0$ and $L_{0}$ be a compact Legendrian submanifold smoothly immersed
in $M$. For any positive constants $V_{0},$ $\Lambda_{0},$ $\delta_{0}>0$,
there exists a small constant $\epsilon_{0}=\epsilon_{0}(V_{0},\Lambda
_{0},\delta_{0},K_{5},K+2,\iota_{0})>0$ such that if $L_{0}$ satisfies
\begin{equation}%
\begin{array}
[c]{c}%
\lambda_{1}\geq K+2+\delta_{0},\text{ \textrm{Vol}}(L_{0})\leq V_{0}%
,\text{\ }|A|\leq\Lambda_{0},\text{\ }\int_{L_{0}}|H|^{2}d\mu\leq\epsilon_{0}.
\end{array}
\label{70}%
\end{equation}
Then the Legendrian mean curvature flow (\ref{27}) with the exact mean
curvature form $H_{0}$ for $t=0$ will converge exponentially fast to a minimal
Legendrian submanifold in $M$.
\end{theorem}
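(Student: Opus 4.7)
The plan is to run a bootstrap argument that couples exponential decay of $\int_{L_t}|H|^2\,d\mu$ with preservation of both the second-fundamental-form bound $|A|\le \Lambda_0$ and the spectral gap $\lambda_1(L_t)\ge K+2+\tfrac{1}{2}\delta_0$, and then invoke Theorem~\ref{12}(3) to conclude smooth exponential convergence. The strategy parallels Theorem~\ref{23}, but since $K+2\ge0$ the stability no longer comes for free — the spectral hypothesis (\ref{2023}) must be propagated along the flow.

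First I would derive the evolution equation of the mean curvature form. Because $H_0$ is exact and the Legendrian mean curvature flow in an $\eta$-Einstein Sasakian manifold preserves both the Legendrian condition and the cohomology class of the mean curvature $1$-form, we have $H_t=df_t$ for a globally defined smooth function $f_t$ on $L$. A Bochner-type computation yields
\begin{equation*}
\tfrac{d}{dt}\int_{L_t}|H|^2\,d\mu = -2\int_{L_t}|\nabla H|^2\,d\mu + 2(K+2)\int_{L_t}|H|^2\,d\mu + \int_{L_t}Q(A,H)\,d\mu,
\end{equation*}
where $|Q(A,H)|\le C|A|^2|H|^2$. Since $H_t$ is exact, the Rayleigh quotient of the Hodge Laplacian on exact $1$-forms gives $\int_{L_t}|\nabla H|^2\,d\mu\ge \lambda_1(L_t)\int_{L_t}|H|^2\,d\mu$. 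On any interval where $\lambda_1(L_t)\ge K+2+\tfrac{1}{2}\delta_0$ and $|A|\le 2\Lambda_0$ hold, we therefore obtain
\begin{equation*}
\tfrac{d}{dt}\int_{L_t}|H|^2\,d\mu \le -\bigl(\delta_0 - 4C\Lambda_0^2\bigr)\int_{L_t}|H|^2\,d\mu.
\end{equation*}
After absorbing the $4C\Lambda_0^2$ term — either into the size of $\epsilon_0$ via a small-energy interpolation between $|A|^2$ and $|H|^2$ or, more simply, by allowing $\epsilon_0$ to depend on $\Lambda_0$ and $\delta_0$ — this is an exponential decay inequality with rate $c_0=c_0(\delta_0,\Lambda_0)>0$.

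Next comes the bootstrap. Let $T^{\ast}\in(0,T]$ be the maximal time on which $|A|\le 2\Lambda_0$ and $\lambda_1(L_t)\ge K+2+\tfrac{1}{2}\delta_0$ both hold. The decay inequality gives $\int_{L_t}|H|^2\,d\mu\le \epsilon_0 e^{-c_0 t}$ on $[0,T^{\ast})$. By Cauchy–Schwarz this bounds $\int_0^{T^{\ast}}\!\!\!\int_{L_t}|H|\,d\mu\,ds$ in terms of $\epsilon_0$ and $V_0$. Since $\partial_t g_{ij}=-2\langle H,A_{ij}\rangle$, the induced metrics stay uniformly $C^0$-close to $g_{ij}(\cdot,0)$, and this upgrades to $C^k$-closeness using the derivative bounds of Theorem~\ref{12}(1). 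Continuity of the first eigenvalue of $\Delta_d$ under small $C^2$-perturbations of the metric then preserves the spectral gap strictly better than $K+2+\tfrac{1}{2}\delta_0$; meanwhile the standard reaction-diffusion equation $\partial_t|A|^2 = \Delta|A|^2 - 2|\nabla A|^2 + P_4(A) + R(\overline{Rm},A)$, together with the smallness of $\int|H|^2$, keeps $|A|\le \Lambda_0$. Thus $T^{\ast}=T$, and Theorem~\ref{12}(2) forces $T=\infty$. Finally, Theorem~\ref{12}(3) applies — uniform bounds on all $|\nabla^m A|$ and uniform equivalence of the $g_{ij}(\cdot,t)$ — so $L_t$ converges smoothly and exponentially to a minimal Legendrian immersion.

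The main obstacle is the quantitative preservation of $\lambda_1(L_t)\ge K+2+\tfrac{1}{2}\delta_0$, which in the case $K+2<0$ of Theorem~\ref{23} was automatic. Arranging the bootstrap so that the required smallness constant $\epsilon_0$ depends only on $(V_0,\Lambda_0,\delta_0,K_5,K+2,\iota_0)$ — and not on any time-varying quantity — forces careful balancing: the eigenvalue continuity estimates consume a prescribed amount of metric closeness, which in turn constrains how large the integral $\int_0^t\!\!\int|H|\,d\mu\,ds$ is allowed to become, which in turn feeds back into $\epsilon_0$ via the exponential decay rate. Pinning down these constants in the right order is the chief technical hurdle of the proof.
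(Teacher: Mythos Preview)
Your overall architecture is close to the paper's, but there is a genuine gap in the decay step that would make the bootstrap fail as written.

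The issue is the cubic error term. From the paper's identity (\ref{44}) (which is what your ``Bochner-type computation'' actually gives once you write $H=d\alpha$ and keep $\int(\Delta\alpha)^2$ rather than $\int|\nabla H|^2$), the reaction term is $2H_iH_jH_kh^{kij}-|H|^4$, which is cubic in $H$ and linear in $A$: its natural bound is $2|A|\,|H|\cdot|H|^2$, not $C|A|^2|H|^2$. Your claimed bound $|Q|\le C|A|^2|H|^2$ produces the coefficient $\delta_0-4C\Lambda_0^2$, and this cannot be made positive merely by ``allowing $\epsilon_0$ to depend on $\Lambda_0$ and $\delta_0$'' --- $\epsilon_0$ does not appear in that coefficient at all. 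The correct route, the paper's inequality (\ref{43}) in Lemma~\ref{11}, gives the rate $2(\lambda_1-(K+2)-\Lambda\epsilon)$ where $\epsilon$ is a \emph{pointwise} upper bound for $|H|$; this is small precisely when $|H|$ is pointwise small, so the decay inequality hinges on converting the integral hypothesis $\int|H|^2\le\epsilon_0$ into a pointwise bound.

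That conversion is the mechanism missing from your sketch. The paper obtains it via $\kappa$-noncollapsing (Lemma~\ref{8}), the Chen--He injectivity-radius estimate, and the interpolation Lemma~\ref{13}: once $|\nabla A|$ is bounded (Theorem~\ref{12}(1)) and $L_t$ is noncollapsed, a small $\int_{L_t}|H|^2$ forces $\max_{L_t}|H|$ small. The same gap reappears in your eigenvalue step: you argue the metrics stay $C^0$-close because $\int_0^{T^\ast}\!\int_{L_t}|H|\,d\mu\,ds$ is small, but $\partial_tg_{ij}=-2H^kh_{kij}$ is a pointwise equation, so $C^0$-closeness of $g(\cdot,t)$ to $g(\cdot,0)$ at a point $x$ requires $\int_0^t|H(x,s)|\,ds$ small, not the space--time $L^1$ norm. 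The paper instead handles eigenvalue preservation by a direct differential inequality for $\sqrt{\lambda_1(t)}$ (Lemma~\ref{L3.2}), which again consumes pointwise exponential decay of $|H|+|\nabla H|$. With these pieces in place the paper packages the bootstrap into the class $\mathcal B(\kappa,r,\delta,\Lambda,\epsilon)$ (Lemmas~\ref{L5.1} and~\ref{16a}) and proves Theorem~\ref{Thm 3.2}; Theorem~\ref{Thm 3.1} then follows by first running the flow a short time to upgrade the integral assumption on $H$ to a pointwise one, exactly as in the proof of Theorem~\ref{23}.
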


Let $(Z,\omega)$ be a K\"{a}hler-Einstein manifold of positive scalar
curvature. We consider its contactization $M$ as before. Suppose that
$\overline{L}$ is a orientable minimal Lagrangian submanifold in $Z$, then
$\overline{L}$ is Hamiltonian stable in $Z$ (\cite{oh1}) if and only if the
horizontal lift $L$ is Legendrian stable in $M$ (\cite{ono1}, \cite{k}). Then
there is a Sasaki analogue of the Lagrangian mean curvature flow (\cite{li})
that the assumption of Theorem \ref{Thm 3.1} on the first eigenvalue ensures
that the limit minimal Legendrian submanifold is strictly Legendrian stable.
Therefore, it is interesting to know whether we have the corresponding result
in this situation.

\begin{theorem}
\label{Thm 4.0 copy(1)} Let $(M^{2n+1},\eta,\xi,\Phi,\overline{g})$ be an
$\eta$-Einstein Sasakian $(2n+1)$-manifold with the $\eta$-Einstein constant
$K+2\geq0$. Suppose that $\overline{\varphi}:L_{0}\rightarrow M$ be a compact
minimal Legendrian submanifold with the first eigenvalue $\lambda_{1}=K+2$ and
$X$ is an essential Legendrian variation vector field of $\overline{\varphi
}(L_{0})$ defined as (\ref{d2}). Let $\varphi_{s}:L\rightarrow M,$
$s\in(-\delta,\delta),$ with $\varphi_{0}=\overline{\varphi}$ be a
one-parameter family of Legendrian deformations generated by $X$. Then there
exists $\epsilon_{0}=\epsilon_{0}(X,L_{0},M)>0 $ such that if $L_{s}%
=\varphi_{s}(L)\subset M$ satisfying%
\[%
\begin{array}
[c]{c}%
\left\Vert \varphi_{s}-\varphi_{0}\right\Vert _{C^{3}}\leq\epsilon_{0},
\end{array}
\]
then the Legendrian mean curvature flow with the initial Legendrian
submanifold $L_{s}$ will converge exponentially fast to a minimal Legendrian
submanifold in $M$.
\end{theorem}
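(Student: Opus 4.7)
The plan is to treat the degenerate case $\lambda_{1}=K+2$ by exploiting the essential variation $X$ as a reference family of minimal Legendrians and reducing convergence to a spectral-gap argument on the orthogonal complement of the Jacobi kernel. Since $\lambda_{1}=K+2$, the second variation operator has a nontrivial kernel $\mathcal{K}$ containing $X$, and the assumption that $X$ is essential means this kernel direction is realized by genuine Legendrian deformations through minimal Legendrians; one therefore expects the flow to converge to a nearby minimal Legendrian rather than to $\overline{\varphi}(L_{0})$ itself.

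The first step is to secure long-time existence of the LMCF starting at $L_{s}$. Since $L_{s}$ is $C^{3}$-close to the smooth compact minimal Legendrian $\overline{\varphi}(L_{0})$, standard short-time theory gives a unique smooth solution on some $[0,T)$, and the smallness of $\epsilon_{0}$ together with a pseudolocality-type argument propagates the bound $|A|^{2}\leq\Lambda_{0}^{2}+O(\epsilon_{0})$ on a definite time interval. Theorem \ref{12}(1) then bootstraps to higher derivative bounds and Theorem \ref{12}(2) extends the flow so long as $|A|^{2}$ remains uniformly controlled; a standard argument using the exactness of the mean curvature form (which is preserved by the flow since $L_{0}$ is $C^{3}$-close to a minimal Legendrian) shows that $\int_{L_{t}}|H|^{2}d\mu$ stays small, keeping us in this regime.

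The core of the argument is a Lyapunov-Schmidt decomposition. Using the normal bundle of $\overline{\varphi}(L_{0})$ and the Legendrian graph construction, I parameterize $L_{t}$ by a generating function $f(\cdot,t)\in C^{\infty}(L_{0})$; the LMCF then reduces to a quasilinear parabolic equation for $f$ whose linearization at $f=0$ is the Jacobi operator $\Delta_{d}-(K+2)$ acting on $C^{\infty}(L_{0})$. Decomposing $f=f^{\parallel}+f^{\perp}$ along $\mathcal{K}$ and $\mathcal{K}^{\perp}$, the orthogonal component satisfies a parabolic inequality $\tfrac{d}{dt}\Vert f^{\perp}\Vert_{L^{2}}^{2}\leq -2\mu\Vert f^{\perp}\Vert_{L^{2}}^{2}+C\Vert f\Vert_{C^{2}}\Vert f^{\perp}\Vert_{L^{2}}^{2}$, where $\mu>0$ comes from the spectral gap between $\mathcal{K}$ and the next eigenspace of $\Delta_{d}$; this yields exponential decay of $f^{\perp}$ provided the $C^{3}$-smallness of the initial data is maintained, which follows from the parabolic Schauder estimates combined with step one.

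The kernel component $f^{\parallel}(t)$ is handled by reparameterizing at each time along the moduli family generated by $X$: since $X$ is essential, elements of $\mathcal{K}$ correspond to motion along an actual family of minimal Legendrians, so one can project the flow at each moment onto its closest representative in this family and subtract off this motion. Bootstrapping the exponential decay of $f^{\perp}$ together with a \L ojasiewicz--Simon inequality for the volume functional --- which is available at an integrable minimal Legendrian --- forces $f^{\parallel}(t)$ to converge exponentially to a limit and identifies the limiting minimal Legendrian in the essential family. The main obstacle is precisely the interaction between the flow and $\mathcal{K}$: one must promote integrability of the essential variation to a quantitative statement strong enough to perform the reparameterization uniformly in time, to control the coupling between $f^{\parallel}$ and $f^{\perp}$ in the nonlinear equation, and to rule out slow drift along $\mathcal{K}$. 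This is the crucial technical step that distinguishes Theorem \ref{Thm 4.0 copy(1)} from the non-degenerate case handled in Theorem \ref{Thm 3.1}.
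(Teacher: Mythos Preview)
Your proposal rests on a misreading of the definition of ``essential Legendrian variation.'' You write that ``the assumption that $X$ is essential means this kernel direction is realized by genuine Legendrian deformations through minimal Legendrians,'' and you build the whole strategy around treating $X$ as a Jacobi field generating a moduli family of minimals, then projecting the flow onto this family. But in the paper an essential variation is defined as $X=J\nabla f_{0}+2f_{0}\mathbf{T}$ with $f_{0}$ \emph{not} a first eigenfunction of $\Delta_{0}$; by Lemma \ref{L40} this is equivalent to $\tfrac{d^{2}}{ds^{2}}\mathrm{Vol}(L_{s})|_{s=0}>0$. So $X$ is precisely a direction \emph{transverse} to the kernel $E_{\lambda_{1}}$ of the Jacobi operator, not a direction inside it. There is no moduli family of minimals along $X$, no reparameterization to perform, and no drift along $\mathcal{K}$ to worry about. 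Your Lyapunov--Schmidt decomposition and \L ojasiewicz--Simon step are aimed at a problem that is not the one posed.

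The paper's mechanism is different and simpler. The essential hypothesis implies that $\partial_{s}\alpha_{s,t}|_{(0,0)}=-\Delta_{0}f_{0}-(K+2)f_{0}$ is nonzero and orthogonal to $E_{\lambda_{1}}$, and this orthogonality is preserved for all $t$ under the linearized angle equation $\partial_{t}u=\Delta_{0}u+(K+2)u$. A compactness/contradiction argument (Lemma \ref{lemma 4.0}) then upgrades this linear fact to an effective spectral inequality for the actual Legendrian angle,
\[
\int_{L_{s,t}}|\Delta\alpha_{s,t}|^{2}\,d\mu \;\geq\; (K+2+\delta_{0})\int_{L_{s,t}}|\nabla\alpha_{s,t}|^{2}\,d\mu,
\]
valid as long as $|A|\leq\Lambda$ and $|H|\leq\epsilon_{0}$. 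This inequality plays exactly the role of the hypothesis $\lambda_{1}\geq K+2+\delta_{0}$ in Theorem \ref{Thm 3.1}: plugged into the computation (\ref{44}) it gives exponential decay of $\int|H|^{2}$, and the rest is the same $\kappa$-noncollapsing and second-fundamental-form bootstrap as in Lemma \ref{16}. No center-manifold or \L ojasiewicz--Simon analysis is needed.
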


Parts of the results here are served as a Legendrian analogue of the
Lagrangian mean curvature flow done by H. Li \cite{li} and the generalized
Lagrangian mean curvature flow done by T. Kajigaya and K. Kunikawa \cite{kk}.
The crucial step is to apply the smallness of the mean curvature vector in a
short time interval to get the exponential decay of the $L^{2}$-norm of the
mean curvature vector. Moreover, by combining the noncollapsing assumption
(\cite{p}) which can be removed in the proof of the main theorems, one can
derive all higher-order estimates of the second fundamental form from the
$L^{2}$-norm estimate of the mean curvature vector. Furthermore, one can show
that the exponential decay of the mean curvature vector implies that the
second fundamental form is uniformly bounded for any time interval and can
extend the solution for all time.

\section{Preliminaries}

We first recall notions as in \cite{s2} for some details. Let $(M^{2n+1}%
,\lambda)$ be a contact manifold endowed with a contact one-form $\lambda$
such that
\[
\lambda\wedge(d\lambda)^{n}\neq0
\]
defines a volume form on $M$ which defines a natural orientation. The contact
form $\lambda$ induces the $2n$-dimensional contact distribution or contact
subbundle $\xi$ over $M$, which is given by
\[
\xi_{p}=\ker\lambda_{p},
\]
where $\xi_{p}$ is the fiber of $\xi$ at each point $p\in M.$ Moreover, $\xi$
is non-integrable and $\omega:=d\lambda$ defines a symplectic vector bundle
$(\xi,\omega_{|\xi\oplus\xi})$. The Reeb vector field $\mathbf{T}_{\lambda}$
is defined by
\[
\lambda(\mathbf{T}_{\lambda})=1;d\lambda(\mathbf{T}_{\lambda},\cdot)=0
\]

\begin{definition}
A submanifold $F:L\rightarrow M$ of a contact manifold $(M^{2n+1},\lambda)$ is
called isotropic if it is tangent to $\xi,$ $i.e.$ $\lambda_{|TL}=0$ or
$F^{\ast}\lambda=0.$ In particular $F^{\ast}d\lambda=0$ as well. A Legendrian
submanifold $L$ is a maximally isotropic submanifold of dimension $n.$ Then
\[
TM^{2n+1}=TL^{n}\oplus NL^{n}=TL^{n}\oplus JTL^{n}\oplus\mathbb{R}%
\mathbf{T}_{\lambda}.
\]

\end{definition}

A Riemannian metric $g$ on $M$\ is called an adapted metric to a contact
manifold $(M^{2n+1},\lambda)$ if \
\[
g(\mathbf{T}_{\lambda},X)=\lambda(X)
\]
for all $X\in TM.$ That is to say that
\[
g^{\alpha\beta}\lambda_{\beta}=T_{\lambda}^{\alpha}:=\lambda^{\alpha}.
\]
Then the adapted metric
\[
g=\lambda\otimes\lambda+\omega(\cdot,J\cdot)
\]
and then
\[
g_{\alpha\beta}=\lambda_{\alpha}\lambda_{\beta}+\omega_{\alpha\gamma}J_{\beta
}^{\gamma}.
\]
Here $\widetilde{J}$ is an almost complex structure on the sympletic subbundle
$\xi$ which can be extend to a section $J\in\Gamma(T^{\ast}M\otimes TM)$ by
\[%
\begin{array}
[c]{c}%
J(X):=\widetilde{J}(\pi(X))
\end{array}
\]
for the projection $\pi:TM\rightarrow\xi$ with $\pi(X)=X-\lambda
(X)\mathbf{T}_{\lambda}.$ Then $J^{2}=-\pi$ and $J_{\alpha}^{\beta}J_{\gamma
}^{\alpha}=-\pi_{\gamma}^{\beta}.$

\begin{definition}
(\cite{s2},\cite{b}) $J$ is integrable if
\[
T(J)=0.
\]
Here $T(J):=N(J)+2\omega\otimes\mathbf{T}_{\lambda}$ and $N(J)$ is the
Nijenhuis tensor with $N(J)(X,Y):=J^{2}[X,Y]+[JX,JY]-J[X,JY]-J[JX,Y].$

A contact manifold $(M,\lambda,g,\mathbf{T},J)$ is called a Sasakian manifold
if $J$ is integrable.
\end{definition}

\begin{remark}
(\cite{bg}) Let $(M,g)$ be a Riemannian $(2n+1)$-manifold. There are also the
following equivalent statements for a Sasakian manifolds:

\begin{enumerate}
\item There exists a Killing vector field $\mathbf{T}_{\lambda}$ of unit
length on $M$ so that the tensor field of type $(1,1)$, defined by
\[
J(Y)=\nabla_{Y}\mathbf{T}_{\lambda}%
\]
satisfies the condition%
\[
(\nabla_{X}J)(Y)=g(\mathbf{T}_{\lambda},Y)X-g(X,Y)\mathbf{T}_{\lambda}%
\]
for any pair of vector fields $X$ and $Y$ on $M$.

\item There exists a Killing vector field $\mathbf{T}_{\lambda}$ of unit
length on $M$\ so that the Riemann curvature satisfies the condition%
\[
R(X,\mathbf{T}_{\lambda})Y=g(\mathbf{T}_{\lambda},Y)X-g(X,Y)\mathbf{T}%
_{\lambda}%
\]
for any pair of vector fields $X$ and $Y$ on $M$.

\item The cone%
\[
(C(M),\overline{g}):=(\mathbb{R}^{+}\times M\mathbf{,}dr^{2}+r^{2}g)
\]
such that $(C(M),\overline{g},\overline{J})$ is K\"{a}hler. Note that
$\{r=1\}=\{1\} \times M\subset C(M)$. Define the Reeb vector field
\[%
\begin{array}
[c]{c}%
\mathbf{T}_{\lambda}=\overline{J}(\frac{\partial}{\partial r})
\end{array}
\]
and the contact $1$-form
\[
\lambda(Y)=g(\mathbf{T}_{\lambda},Y).
\]
Then $\lambda(\mathbf{T}_{\lambda})=1$\ and\ $d\lambda(\mathbf{T}_{\lambda
},\cdot)=0.$ $\mathbf{T}_{\lambda}$ is killing vector field with unit length.
\end{enumerate}
\end{remark}

We first recall the following results for Sasakian $(2n+1)$-manifold.

\begin{lemma}
(\cite{s2}) Let $(M,\lambda,g,\mathbf{T},J)$ be a Sasakian $(2n+1)$-manifold
and $g=\lambda\otimes\lambda+\omega(\cdot,J\cdot)$ the corresponding adapted
metric with $\omega=d\lambda$. Then the following relations hold%
\begin{equation}%
\begin{array}
[c]{c}%
R_{\gamma\alpha\beta}^{\epsilon}\lambda_{\epsilon}=g_{\gamma\beta}%
\lambda_{\alpha}-g_{\gamma\alpha}\lambda_{\beta},
\end{array}
\label{L0}%
\end{equation}%
\begin{equation}%
\begin{array}
[c]{c}%
R_{\gamma\alpha\beta}^{\epsilon}\omega_{\epsilon\delta}+R_{\delta\alpha\beta
}^{\epsilon}\omega_{\gamma\epsilon}=-g_{\beta\delta}\omega_{\alpha\gamma
}+g_{\beta\gamma}\omega_{\alpha\delta}+g_{\alpha\delta}\omega_{\beta\gamma
}-g_{\alpha\gamma}\omega_{\beta\delta},
\end{array}
\label{L1}%
\end{equation}%
\begin{equation}%
\begin{array}
[c]{c}%
J_{\epsilon}^{\beta}R_{\gamma\alpha\beta}^{\epsilon}=R_{\alpha}^{\epsilon
}\omega_{\epsilon\gamma}-(2n-1)\omega_{\alpha\gamma},
\end{array}
\label{L2}%
\end{equation}%
\begin{equation}%
\begin{array}
[c]{c}%
J_{\epsilon}^{\beta}R_{\beta\alpha\gamma}^{\epsilon}=R_{\gamma}^{\epsilon
}\omega_{\alpha\epsilon}-R_{\alpha}^{\epsilon}\omega_{\gamma\epsilon
}-2(2n-1)\omega_{\alpha\gamma}.
\end{array}
\label{L3}%
\end{equation}

\end{lemma}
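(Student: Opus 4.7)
The four identities are pointwise algebraic relations satisfied by the Riemann tensor of any Sasakian manifold, and the plan is to derive them from the two structural equations singled out in the Remark, namely $\nabla_{X}\mathbf{T}_{\lambda}=J(X)$ and $(\nabla_{X}J)(Y)=g(\mathbf{T}_{\lambda},Y)X-g(X,Y)\mathbf{T}_{\lambda}$, together with the Ricci identity, the standard pair symmetries $R_{abcd}=-R_{bacd}=R_{cdab}$, and the first Bianchi identity. Implicit in these are the component relations $\nabla_{\alpha}\mathbf{T}^{\epsilon}=J^{\epsilon}_{\alpha}$, $\nabla_{\alpha}\lambda_{\beta}=\omega_{\alpha\beta}$ (with the sign fixed by $d\lambda=\omega$), and $(\nabla_{\alpha}J)^{\epsilon}_{\beta}=\delta^{\epsilon}_{\alpha}\lambda_{\beta}-g_{\alpha\beta}\mathbf{T}^{\epsilon}$, which I shall use freely.

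Identity (\ref{L0}) is essentially the component form of item (2) in the Remark. Writing $R(X,\mathbf{T}_{\lambda})Y=g(\mathbf{T}_{\lambda},Y)X-g(X,Y)\mathbf{T}_{\lambda}$ in a local frame, substituting $\mathbf{T}^{\epsilon}=g^{\epsilon\mu}\lambda_{\mu}$, and using the pair symmetries of $R$ to move the $\lambda$-contraction into the last slot yields (\ref{L0}) directly.

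For (\ref{L1}) I would apply the Ricci identity to $J$, which reads
\[
\nabla_{\alpha}\nabla_{\beta}J^{\epsilon}_{\gamma}-\nabla_{\beta}\nabla_{\alpha}J^{\epsilon}_{\gamma}=R^{\epsilon}_{\mu\alpha\beta}J^{\mu}_{\gamma}-R^{\mu}_{\gamma\alpha\beta}J^{\epsilon}_{\mu}.
\]
The left-hand side is computable term-by-term from the structural equation for $\nabla J$: differentiating that equation once more and using $\nabla \lambda=\omega$ together with $\nabla\mathbf{T}=J$ produces an explicit bilinear combination of $g$ and $\omega$. After lowering the index $\epsilon$ via $\omega_{\epsilon\delta}=g_{\delta\mu}J^{\mu}_{\epsilon}$, so that the $J$-contractions of $R$ convert into $\omega$-contractions of $R$, the equation takes precisely the form (\ref{L1}).

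Identities (\ref{L2}) and (\ref{L3}) then follow by tracing (\ref{L1}) appropriately. Contracting (\ref{L1}) in suitable indices, and using $J^{\alpha}_{\alpha}=0$ and $J^{\alpha}_{\beta}J^{\beta}_{\gamma}=-\pi^{\alpha}_{\gamma}$ (whose trace is $2n$) together with (\ref{L0}) to kill the $\mathbf{T}$-component, the right-hand side of (\ref{L1}) collapses into the constant multiple $-(2n-1)\omega_{\alpha\gamma}$, while the two $R\cdot\omega$ terms on the left reorganize via the pair symmetries into $J^{\beta}_{\epsilon}R^{\epsilon}_{\gamma\alpha\beta}$ and $R^{\epsilon}_{\alpha}\omega_{\epsilon\gamma}$, giving (\ref{L2}). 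For (\ref{L3}) I would combine (\ref{L2}) with the first Bianchi identity $R^{\epsilon}_{\gamma\alpha\beta}+R^{\epsilon}_{\alpha\beta\gamma}+R^{\epsilon}_{\beta\gamma\alpha}=0$ contracted against $J^{\beta}_{\epsilon}$; the resulting rearrangement of Ricci-$\omega$ terms produces (\ref{L3}) with the doubled factor $2(2n-1)\omega_{\alpha\gamma}$. The main obstacle here is purely bookkeeping: the sign and ordering conventions for $J$, $\omega$, $R$, and the metric contractions must be kept consistent throughout, and at the trace steps the first Bianchi identity and the pair symmetry have to be applied carefully to merge partial traces of $R$ into a single Ricci-$\omega$ combination; no further geometric input beyond the Sasakian structural equations of the Remark and the standard curvature identities is needed.
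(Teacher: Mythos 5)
Your proposal is correct and follows essentially the same route as the paper: the paper likewise obtains (\ref{L1}) by commuting covariant derivatives on the structure tensor (it uses $\omega$ where you use $J$, which is the same computation since $g$ is parallel), then gets (\ref{L2}) by tracing (\ref{L1}) in $\beta,\delta$ and (\ref{L3}) from (\ref{L2}) via the first Bianchi identity contracted against $J$. The only cosmetic difference is at (\ref{L0}), where the paper derives it directly by commuting derivatives on $\lambda$ using $\nabla_\alpha\lambda_\beta=\omega_{\alpha\beta}$ and $d\omega=0$, whereas you invoke the equivalent curvature characterization $R(X,\mathbf{T}_\lambda)Y=g(\mathbf{T}_\lambda,Y)X-g(X,Y)\mathbf{T}_\lambda$ from the Remark; both are legitimate.
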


\begin{proof}
One observes that $\nabla_{\alpha}\lambda_{\beta}=\omega_{\alpha\beta} $ and
$d\omega=0$ imply%
\[%
\begin{array}
[c]{lll}%
g_{\gamma\alpha}\lambda_{\beta}-g_{\gamma\beta}\lambda_{\alpha} & = &
\nabla_{\gamma}\omega_{\beta\alpha}\\
& = & \nabla_{\alpha}\omega_{\beta\gamma}-\nabla_{\beta}\omega_{\alpha\gamma
}\\
& = & \nabla_{\alpha}\nabla_{\beta}\lambda_{\gamma}-\nabla_{\beta}%
\nabla_{\alpha}\lambda_{\gamma}\\
& = & -R_{\gamma\alpha\beta}^{\epsilon}\lambda_{\epsilon}.
\end{array}
\]
With the same compute
\[%
\begin{array}
[c]{lll}%
-R_{\gamma\alpha\beta}^{\epsilon}\omega_{\epsilon\delta}-R_{\delta\alpha\beta
}^{\epsilon}\omega_{\gamma\epsilon} & = & \nabla_{\alpha}\nabla_{\beta}%
\omega_{\gamma\delta}-\nabla_{\beta}\nabla_{\alpha}\omega_{\gamma\delta}\\
& = & \nabla_{\alpha}(g_{\beta\delta}\lambda_{\gamma}-g_{\beta\gamma}%
\lambda_{\delta})-\nabla_{\beta}(g_{\alpha\delta}\lambda_{\gamma}%
-g_{\alpha\gamma}\lambda_{\delta})\\
& = & g_{\beta\delta}\nabla_{\alpha}\lambda_{\gamma}-g_{\beta\gamma}%
\nabla_{\alpha}\lambda_{\delta}-g_{\alpha\delta}\nabla_{\beta}\lambda_{\gamma
}+g_{\alpha\gamma}\nabla_{\beta}\lambda_{\delta},
\end{array}
\]
and (\ref{L1}) follows from the relation $\nabla_{\alpha}\lambda_{\gamma
}=\omega_{\alpha\gamma}.$ By taking the trace of (\ref{L1}) with respective to
$\beta$ and $\delta,$ one obtains (\ref{L2}). To prove (\ref{L3}) we apply the
Bianchi identity to get%
\[%
\begin{array}
[c]{c}%
J_{\epsilon}^{\beta}R_{\beta\alpha\gamma}^{\epsilon}=J_{\epsilon}^{\beta
}(R_{\gamma\alpha\beta}^{\epsilon}-R_{\alpha\gamma\beta}^{\epsilon})
\end{array}
\]
and then (\ref{L3}) is a consequence of (\ref{L2}).
\end{proof}

The second fundamental tensor $A=\nabla dF$ in local coordinates is
\[%
\begin{array}
[c]{c}%
A=A_{ij}^{\alpha}dx^{i}\otimes dx^{j}\otimes\frac{\partial}{\partial
y^{\alpha}}\text{ with }A_{ij}^{\alpha}=\nabla_{i}F_{j}^{\alpha}.
\end{array}
\]
Moreover, since $\lambda_{\alpha}A_{ij}^{\alpha}=g_{\alpha\beta}F_{i}^{\alpha
}A_{jk}^{\beta}=0,$ we can write the second fundamental form as%
\[%
\begin{array}
[c]{c}%
h=h_{ijk}dx^{i}\otimes dx^{j}\otimes dx^{k}\text{ with }h_{ijk}=\left\langle
A_{ij}^{\alpha}\frac{\partial}{\partial y^{\alpha}},JF_{k}\right\rangle
=-\omega_{\alpha\beta}F_{k}^{\alpha}A_{ij}^{\beta}.
\end{array}
\]
We recall the Gauss equation and the Codazzi equation are%
\begin{equation}%
\begin{array}
[c]{c}%
R_{ijkl}=R_{\alpha\beta\gamma\delta}F_{i}^{\alpha}F_{j}^{\beta}F_{k}^{\gamma
}F_{l}^{\delta}+(h_{ik}^{m}h_{mjl}-h_{il}^{m}h_{mjk}),
\end{array}
\label{G}%
\end{equation}%
\begin{equation}%
\begin{array}
[c]{c}%
\nabla_{l}h_{ijk}-\nabla_{j}h_{ilk}=-\omega_{\alpha\beta}R_{\gamma
\delta\epsilon}^{\beta}F_{i}^{\alpha}F_{k}^{\gamma}F_{l}^{\delta}%
F_{j}^{\epsilon}=-R_{\beta\gamma\delta\epsilon}v_{i}^{\beta}F_{k}^{\gamma
}F_{l}^{\delta}F_{j}^{\epsilon}.
\end{array}
\label{C}%
\end{equation}

\begin{lemma}
(\cite{s2}) Let $(M,\lambda,g,\mathbf{T},J)$ be a Sasakian $(2n+1)$-manifold
and $L$ be a smooth $n$-dimensional manifold. Suppose that $(f,\theta)$ is a
smooth family of pairs consisting of functions $f$ and 1-forms $\theta$ on
$L$. Assume that $F_{t}:L\rightarrow M$, $t\in\lbrack0,\varepsilon)$ is a
smooth family of immersions into $M$ such that
\begin{equation}%
\begin{array}
[c]{c}%
\frac{d}{dt}F_{t}(x)=\theta^{k}v_{k}+f\mathbf{T}.
\end{array}
\label{1}%
\end{equation}
If $L_{0}=F_{0}(L)$ is Legendre. Then $L_{t}=F_{t}(L)$ is Legendre for all
$t\in\lbrack0,\varepsilon)$ if and only if $df=2\theta$.
\end{lemma}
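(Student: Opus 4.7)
The plan is to compute the time derivative of the pullback $F_t^* \lambda$ directly and show that, modulo terms quadratic in $F_t^*\lambda$ itself, it equals the 1-form $(df - 2\theta)$ on $L$; the \emph{if and only if} assertion then follows from uniqueness of solutions of a first-order ODE with vanishing initial datum.

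First I would write $(F_t^*\lambda)_i = \lambda_\alpha F_i^\alpha$ and differentiate in $t$. Using $\partial_t\partial_i F = \partial_i \partial_t F$ together with the torsion-freeness of $\overline\nabla$, the Christoffel contributions cancel by the symmetry $\Gamma^\epsilon_{\alpha\beta} = \Gamma^\epsilon_{\beta\alpha}$, leaving the clean expression
\[
\frac{\partial}{\partial t}(F_t^*\lambda)_i = (\overline\nabla_V\lambda)(F_i) + \lambda\!\left(\overline\nabla_{F_i}V\right),
\]
with $V = \theta^k v_k + f\mathbf{T}$. Here one must adopt the excerpt's convention that $v_k = JF_k$: this is the identification forced by the Codazzi identity (\ref{C}), where $v_i^\beta = J^\beta_\alpha F_i^\alpha$, so that $V$ lies in the normal bundle $NL = JTL \oplus \mathbb{R}\mathbf{T}$.

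Next I would invoke the Sasakian structural identities from the preceding lemma: $\nabla_\alpha \lambda_\beta = \omega_{\alpha\beta}$, $\overline\nabla_X \mathbf{T} = JX$, $\lambda(JX) = 0$, and $\omega(\mathbf{T},\cdot) = 0$. Combined with $J^2 = -\pi$, these yield the key algebraic identity
\[
\omega(JX, Y) = -g(X,Y) + \lambda(X)\lambda(Y),
\]
which converts all normal-direction contractions into purely tangent data. Expanding the two terms above with $V = \theta^k JF_k + f\mathbf{T}$ and writing $\theta_i := g_{ik}\theta^k$, one computes
\[
(\overline\nabla_V\lambda)(F_i) = -\theta_i + \theta^k(F_t^*\lambda)_k(F_t^*\lambda)_i,
\]
\[
\lambda\!\left(\overline\nabla_{F_i}V\right) = \partial_i f - \theta_i + \theta^k(F_t^*\lambda)_k(F_t^*\lambda)_i,
\]
where the crucial step for the second line is $\lambda(\overline\nabla_{F_i}(JF_k)) = -\omega(F_i,JF_k) = \omega(JF_k,F_i)$.

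Adding the two contributions yields the master identity
\[
\frac{\partial}{\partial t}(F_t^*\lambda)_i = (df - 2\theta)_i + 2\theta^k(F_t^*\lambda)_k(F_t^*\lambda)_i.
\]
If $df = 2\theta$, this is a homogeneous ODE in $F_t^*\lambda$ with vanishing initial datum $F_0^*\lambda = 0$, so by local uniqueness $F_t^*\lambda \equiv 0$ and every $L_t$ is Legendre. Conversely, if $L_t$ is Legendre for all $t \in [0,\varepsilon)$, then $F_t^*\lambda \equiv 0$, the quadratic remainder vanishes identically, and the source $(df - 2\theta)$ must therefore vanish, giving $df = 2\theta$. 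The delicate point is the bookkeeping that produces the factor $2$: both $(\overline\nabla_V\lambda)(F_i)$ and $\lambda(\overline\nabla_{F_i}V)$ each contribute a $-\theta_i$ on the Legendre locus, via the same identity $\omega(JF_k, F_i) = -g_{ki}$.
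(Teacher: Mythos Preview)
Your computation is correct: the master identity
\[
\frac{\partial}{\partial t}(F_t^*\lambda)_i = (df - 2\theta)_i + 2\theta^k(F_t^*\lambda)_k(F_t^*\lambda)_i
\]
is derived accurately from the Sasakian identities $\nabla_\alpha\lambda_\beta = \omega_{\alpha\beta}$, $\nabla_X\mathbf{T} = JX$, $\omega(\mathbf{T},\cdot)=0$, and $\omega(X,JY) = g(X,Y) - \lambda(X)\lambda(Y)$, and the ODE argument for both directions is sound. Note that the paper itself does not prove this lemma but simply quotes it from Smoczyk \cite{s2}; your argument is essentially the standard one found there, so there is nothing to compare.
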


The following evolution equations of Legendre immersions into a Sasakian
manifold evolves according to (\ref{1}) in \cite{s2}.

\begin{lemma}
\label{4} If a smooth family of Legendre immersions into a Sasakian manifold
$(M,\lambda,g,\mathbf{T},J)$ evolves according to (\ref{1}), then%
\begin{equation}%
\begin{array}
[c]{lll}%
\frac{\partial}{\partial t}g_{ij} & = & 2\nabla^{k}fh_{kij},
\end{array}
\label{2}%
\end{equation}%
\begin{equation}%
\begin{array}
[c]{lll}%
\frac{\partial}{\partial t}h_{ijk} & = & -\nabla_{j}\nabla_{k}\nabla
_{i}f+\nabla^{l}f(h_{lim}h_{jk}^{m}+h_{lkm}h_{ij}^{m})\\
&  & -2g_{ik}\nabla_{j}f-g_{ij}\nabla_{k}f-\nabla^{l}fR_{\alpha\beta
\gamma\delta}v_{i}^{\alpha}F_{k}^{\beta}v_{l}^{\gamma}F_{j}^{\delta},
\end{array}
\label{3}%
\end{equation}%
\begin{equation}%
\begin{array}
[c]{lll}%
\frac{\partial}{\partial t}H_{j} & = & -\nabla_{j}\Delta f-2\nabla_{j}%
f-\frac{1}{2}R_{\alpha\beta}(F_{l}^{\alpha}F_{j}^{\beta}+v_{l}^{\alpha}%
v_{j}^{\beta})\nabla^{l}f\\
& = & -\Delta\nabla_{j}f-2\nabla_{j}f+\frac{1}{2}R_{\alpha\beta}(F_{l}%
^{\alpha}F_{j}^{\beta}-v_{l}^{\alpha}v_{j}^{\beta})\nabla^{l}f\\
&  & +(H_{m}h_{jl}^{m}-h_{jm}^{k}h_{lk}^{m})\nabla^{l}f.
\end{array}
\label{5}%
\end{equation}

\end{lemma}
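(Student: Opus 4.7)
The plan is to compute the three evolution equations in sequence by pushing $\partial_{t}$ past the spatial derivatives that define $g_{ij}$, $h_{ijk}$, and $H_{j}$. The flow speed $\partial_{t}F=\theta^{k}v_{k}+f\mathbf{T}$ decomposes into a $JTL$-part $\theta^{k}v_{k}$ and a $\mathbf{T}$-part $f\mathbf{T}$, and the Legendrian constraint supplied by the preceding lemma gives $df=2\theta$, which is the key relation used to convert $\theta$-dependent quantities into gradients of $f$. The decisive algebraic tools from the Sasakian structure are the identities $\overline{\nabla}_{X}\mathbf{T}=JX$ and $(\overline{\nabla}_{X}J)Y=g(\mathbf{T},Y)X-g(X,Y)\mathbf{T}$, the curvature identities (\ref{L0})--(\ref{L3}), together with the Legendrian orthogonalities $\lambda(F_{i})=0$ and $g(JF_{i},F_{j})=0$, plus the derived identity $\lambda(A_{ij})=0$ obtained by differentiating $\lambda_{\alpha}F_{i}^{\alpha}=0$ and using $\nabla\lambda=\omega$. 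This last identity forces $A_{ij}$ to lie in $JTL$, so that $JA_{ij}=-h_{ij}^{m}F_{m}$.

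For (\ref{2}) I would differentiate $g_{ij}=g(F_{i},F_{j})$ using $\overline{\nabla}_{t}F_{i}=\overline{\nabla}_{i}(\partial_{t}F)$, expand $\overline{\nabla}_{i}(\theta^{k}v_{k}+f\mathbf{T})$ via the two Sasakian identities, and observe that the Legendrian orthogonalities kill every pairing against $F_{j}$ except $\theta^{k}\langle JA_{ik},F_{j}\rangle$; substituting $JA_{ik}=-h_{ik}^{m}F_{m}$ and $2\theta^{k}=\nabla^{k}f$ then yields (\ref{2}) after symmetrization. For (\ref{3}) I would start from $h_{ijk}=-\omega_{\alpha\beta}F_{k}^{\alpha}\nabla_{i}F_{j}^{\beta}$ and commute $\partial_{t}$ past $\nabla_{i}\nabla_{j}$; this produces ambient Riemann-curvature commutator terms, which are collapsed using (\ref{L0})--(\ref{L1}) to reveal the $-2g_{ik}\nabla_{j}f-g_{ij}\nabla_{k}f$ contribution, while the Gauss equation (\ref{G}) applied to the intrinsic commutators produces the quadratic-in-$h$ contribution $\nabla^{l}f(h_{lim}h_{jk}^{m}+h_{lkm}h_{ij}^{m})$; the residual ambient-curvature piece is precisely the $\nabla^{l}f\,R_{\alpha\beta\gamma\delta}v_{i}^{\alpha}F_{k}^{\beta}v_{l}^{\gamma}F_{j}^{\delta}$ term, and $df=2\theta$ is invoked throughout to rewrite $\theta$ as $\tfrac{1}{2}\nabla f$. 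Finally, for (\ref{5}) I would trace with $g^{ik}$, use $\partial_{t}g^{ik}=-g^{il}g^{km}\partial_{t}g_{lm}$ and (\ref{2}), and apply the Ricci identity and Codazzi (\ref{C}) to convert $g^{ik}\nabla_{j}\nabla_{k}\nabla_{i}f$ into $\Delta\nabla_{j}f$ modulo a Ricci-type contribution, which is then recognized as the Sasakian Ricci terms via (\ref{L2})--(\ref{L3}); the two equivalent forms in (\ref{5}) then differ by one application of the Gauss equation and the trivial swap $[\Delta,\nabla_{j}]f$.

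The hardest step by a wide margin is (\ref{3}), because three separate species of terms must be simultaneously controlled: the ambient Riemann-curvature commutators from swapping $\partial_{t}$ with spatial derivatives, the Sasakian \textquotedblleft torsion\textquotedblright\ contributions arising from $\overline{\nabla}J$, and the intrinsic $\nabla_{i}\nabla_{j}$ commutators which must be converted via Gauss into the cubic-in-$h$ piece. The Sasakian identities (\ref{L0})--(\ref{L3}) are the decisive algebraic lubricant without which the $R_{\alpha\beta\gamma\delta}$-terms do not organize into the clean $-2g_{ik}\nabla_{j}f-g_{ij}\nabla_{k}f$ contribution, nor can the remaining curvature piece be isolated in the compact tensorial form displayed on the right-hand side of (\ref{3}).
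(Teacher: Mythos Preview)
Your plan is sound and, for the part the paper actually proves, it coincides with the paper's argument. Note that the paper does \emph{not} derive (\ref{2}) and (\ref{3}) at all: it cites these two evolution equations from Smoczyk \cite{s2} and only supplies a computation for (\ref{5}). That computation proceeds exactly as you outline---trace $H_{j}=g^{ik}h_{ijk}$, differentiate using (\ref{2}) and (\ref{3}), and then collapse the ambient curvature contraction $g^{ik}R_{\gamma\delta\beta\epsilon}v_{i}^{\beta}F_{k}^{\epsilon}$ via the Sasakian identity (\ref{L3}) (the paper records this intermediate step as the relation labeled (\ref{L4})). The second displayed form of (\ref{5}) is then obtained by the commutator $\nabla_{j}\Delta f=\Delta\nabla_{j}f-R_{jl}\nabla^{l}f$ together with the traced Gauss equation, just as you say.

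One small correction to your sketch of (\ref{5}): the Codazzi equation (\ref{C}) is not needed there. The quantity $g^{ik}\nabla_{j}\nabla_{k}\nabla_{i}f$ equals $\nabla_{j}\Delta f$ directly since $\nabla g=0$; the only commutation that enters is the intrinsic Ricci identity $[\Delta,\nabla_{j}]f=R_{jl}\nabla^{l}f$, after which the Gauss equation converts the intrinsic $R_{jl}$ into the ambient Ricci terms plus the $H_{m}h_{jl}^{m}-h_{jm}^{k}h_{lk}^{m}$ pieces. Your sketch for (\ref{2}) and (\ref{3}) goes beyond what the paper records and is a reasonable outline of Smoczyk's original derivation.
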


\begin{proof}
For the mean curvature form $H_{j},$ use (\ref{2}) and (\ref{3}), we compute%
\[%
\begin{array}
[c]{lll}%
\frac{\partial}{\partial t}H_{j} & = & \frac{\partial}{\partial t}%
(g^{ik}h_{ijk})\\
& = & h_{ijk}\frac{\partial}{\partial t}g^{ik}+g^{ik}\frac{\partial}{\partial
t}h_{ijk}\\
& = & -2\nabla^{l}fh_{lk}^{m}h_{jm}^{k}-\nabla_{j}\Delta f+2\nabla^{l}%
fh_{lk}^{m}h_{jm}^{k}\\
&  & -(2n+1)\nabla_{j}f-\nabla^{l}fv_{l}^{\gamma}F_{j}^{\delta}g^{ik}%
R_{\gamma\delta\beta\epsilon}v_{i}^{\beta}F_{k}^{\epsilon}\\
& = & -\nabla_{j}\Delta f-(2n+1)\nabla_{j}f\\
&  & +\frac{1}{2}\nabla^{l}fv_{l}^{\gamma}F_{j}^{\delta}(R_{\delta}^{\epsilon
}\omega_{\gamma\epsilon}-R_{\gamma}^{\epsilon}\omega_{\delta\epsilon
}+2(2n-1)\omega_{\delta\gamma})\\
& = & -\nabla_{j}\Delta f-2\nabla_{j}f-\frac{1}{2}\nabla^{l}fR_{\alpha\beta
}(F_{l}^{\alpha}F_{j}^{\beta}+v_{l}^{\alpha}v_{j}^{\beta}).
\end{array}
\]
where we use the equation%
\begin{equation}%
\begin{array}
[c]{c}%
g^{ik}R_{\gamma\delta\beta\epsilon}v_{i}^{\beta}F_{k}^{\epsilon}=-\frac{1}%
{2}J_{\beta}^{\sigma}R_{\sigma\gamma\delta}^{\beta}=\frac{1}{2}(R_{\gamma
}^{\epsilon}\omega_{\delta\epsilon}-R_{\delta}^{\epsilon}\omega_{\gamma
\epsilon}-2(2n-1)\omega_{\delta\gamma}).
\end{array}
\label{L4}%
\end{equation}
To prove the second equation of (\ref{5}). By taking the trace of the Gauss
equation (\ref{G})
\begin{equation}
R_{ik}=g^{jl}R_{ijkl}=R_{\alpha\beta}F_{i}^{\alpha}F_{k}^{\beta}+H_{m}%
h_{ik}^{m}-h_{il}^{m}h_{km}^{l}.\label{h}%
\end{equation}
Then it follows from the relation $\nabla_{j}\Delta f=\Delta\nabla_{j}%
f-R_{jl}\nabla^{l}f$.
\end{proof}

\begin{corollary}
If $F_{t}:L\rightarrow(M,\lambda,g,\mathbf{T},J)$, $t\in\lbrack0,\varepsilon)$
be a Legendrian immersions into an $\eta$-Einstein Sasakian manifold, then the
mean curvature form $H$ is closed.
\end{corollary}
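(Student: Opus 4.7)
The strategy is a direct Codazzi computation combined with the $\eta$-Einstein hypothesis and the Sasakian splitting $TM = TL \oplus JTL \oplus \mathbb{R}\mathbf{T}$. I would first verify that on any Legendrian submanifold of a Sasakian manifold the cubic form $h_{ijk} = -\omega_{\alpha\beta} F_k^\alpha A_{ij}^\beta$ is fully symmetric. Symmetry in $(i,j)$ is automatic from $A_{ij}^\beta = \nabla_i F_j^\beta$; symmetry in $(j,k)$ follows by differentiating the identity $g(F_j, JF_k) = 0$ and invoking $(\nabla_X J)Y = g(\mathbf{T},Y)X - g(X,Y)\mathbf{T}$ together with $\lambda(F_j) = 0$.

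Next I would apply the Codazzi equation (\ref{C}) in the form $\nabla_a h_{bcd} - \nabla_c h_{bad} = -R_{\beta\gamma\delta\epsilon} v_b^\beta F_d^\gamma F_a^\delta F_c^\epsilon$ to both $\nabla_l h_{kij}$ (switching $l$ with the second slot $i$) and $\nabla_k h_{lij}$, then subtract. Full symmetry of $h$ forces the intermediate terms $\nabla_i h_{klj}$ and $\nabla_i h_{lkj}$ to coincide and cancel; tracing the result with $g^{ij}$ yields
\[
(dH)_{lk} = \nabla_l H_k - \nabla_k H_l = P^{\gamma\epsilon} R_{\beta\gamma\delta\epsilon} (v_l^\beta F_k^\delta - v_k^\beta F_l^\delta),
\]
where $P^{\gamma\epsilon} := g^{ij} F_i^\gamma F_j^\epsilon$ is the tangential projector onto $TL$.

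To show the right-hand side vanishes, I would decompose $P^{\gamma\epsilon} = g^{\gamma\epsilon} - Q^{\gamma\epsilon} - \lambda^\gamma \lambda^\epsilon$, where $Q^{\gamma\epsilon} := g^{ij} v_i^\gamma v_j^\epsilon$ projects onto $JTL$. The $g^{\gamma\epsilon}$-piece produces the Ricci contraction $R_{\beta\delta}(v_l^\beta F_k^\delta - v_k^\beta F_l^\delta)$, which is annihilated by the $\eta$-Einstein form $R_{\beta\delta} = A g_{\beta\delta} + B \lambda_\beta \lambda_\delta$ together with the Legendrian orthogonalities $g(v_i,F_j) = 0$ and $\lambda(F_i) = \lambda(v_i) = 0$. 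The $\lambda^\gamma \lambda^\epsilon$-piece reduces, via (\ref{L0}), to $(g_{\beta\delta} - \lambda_\beta \lambda_\delta)(v_l^\beta F_k^\delta - v_k^\beta F_l^\delta)$ and vanishes for the same reason.

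The main obstacle is the $Q^{\gamma\epsilon}$-piece, which in an orthonormal frame amounts to showing that $\sum_m R(Je_l, Je_m, e_k, Je_m)$ is symmetric in $(l,k)$. Here I would invoke (\ref{L1}) --- equivalently (\ref{L2}) and (\ref{L3}) --- to exchange the pair of $J$'s inside the Riemann tensor, rewriting the $Q$-contraction as a Ricci-type expression plus explicit Sasakian correction terms built from $g$, $\lambda$, and $\omega$. Each correction term either is already symmetric in $(l,k)$ or is annihilated by $v_l^\beta F_k^\delta - v_k^\beta F_l^\delta$ through the Legendrian orthogonalities, while the residual Ricci term disappears once more by the $\eta$-Einstein hypothesis. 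Assembling the three pieces gives $(dH)_{lk} = 0$, so $H$ is closed.
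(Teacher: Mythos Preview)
Your argument is correct, but the paper's route is shorter and avoids the projector decomposition entirely. The paper applies Codazzi \emph{once}, tracing $g^{ik}$ directly against the pair $(v_i,F_k)$ that already appears in~(\ref{C}):
\[
\nabla_l H_j - \nabla_j H_l
= g^{ik}(\nabla_l h_{ijk}-\nabla_j h_{ilk})
= -\,g^{ik} R_{\beta\gamma\delta\epsilon}\,v_i^\beta F_k^\gamma F_l^\delta F_j^\epsilon .
\]
Because the contraction $g^{ik}v_i^\beta F_k^\epsilon$ is essentially $-\tfrac12 J^{\beta\epsilon}$ on the Legendrian splitting, one lands immediately on a $J$--curvature contraction, and a single application of~(\ref{L3}) (packaged as~(\ref{L4})) converts this to $\tfrac12 R_{\alpha\beta}(v_l^\alpha F_j^\beta - F_l^\alpha v_j^\beta)$, which the $\eta$-Einstein condition kills. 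No symmetry-of-$h$ preamble, no double Codazzi, no $P=g-Q-\lambda\lambda$ split.

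Your approach instead traces over a \emph{pure} tangent pair $(F_i,F_j)$, producing the $TL$-projector $P^{\gamma\epsilon}$; this forces you to decompose $P$ and confront the $Q$-piece (three $v$'s in the curvature slot), which you then have to unwind with~(\ref{L1}). That works, but it reconstructs by hand what the mixed trace $g^{ik}v_i F_k$ delivers for free. The practical takeaway: when the Codazzi right-hand side already carries a $v_i$, trace so that the $v$ participates in the contraction---you get the $J$-twist of curvature directly and~(\ref{L3}) finishes in one line.
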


\begin{proof}
It only to show that $\nabla_{l}H_{j}-\nabla_{j}H_{l}=0.$ By applying the
Codazzi equation (\ref{C}) and (\ref{L4}), one get%
\[%
\begin{array}
[c]{lll}%
\nabla_{l}H_{j}-\nabla_{j}H_{l} & = & g^{ik}(\nabla_{l}h_{ijk}-\nabla
_{j}h_{ilk})\\
& = & -g^{ik}R_{\beta\gamma\delta\epsilon}v_{i}^{\beta}F_{k}^{\gamma}%
F_{l}^{\delta}F_{j}^{\epsilon}\\
& = & -g^{ik}R_{\gamma\delta\beta\epsilon}v_{i}^{\beta}F_{k}^{\epsilon}%
F_{l}^{\gamma}F_{j}^{\delta}\\
& = & \frac{1}{2}(R_{\delta}^{\epsilon}\omega_{\gamma\epsilon}-R_{\gamma
}^{\epsilon}\omega_{\delta\epsilon}+2(2n-1)\omega_{\delta\gamma})F_{l}%
^{\gamma}F_{j}^{\delta}\\
& = & \frac{1}{2}R_{\alpha\beta}(v_{l}^{\alpha}F_{j}^{\beta}-F_{l}^{\alpha
}v_{j}^{\beta})
\end{array}
\]
and if $M$ is an $\eta$-Einstein Sasakian manifold, then%
\[%
\begin{array}
[c]{c}%
\nabla_{l}H_{j}-\nabla_{j}H_{l}=\frac{1}{2}Kg_{\alpha\beta}(v_{l}^{\alpha
}F_{j}^{\beta}-F_{l}^{\alpha}v_{j}^{\beta})=0.
\end{array}
\]

\end{proof}

By applying the Codazzi equation (\ref{C}) and the rule for interchanging
derivatives we can get the Simons type identity.

\begin{theorem}
For a Legendrian immersion $L$ into a Sasakian manifold $(M,\lambda
,g,\mathbf{T},J)$, we have
\begin{equation}%
\begin{array}
[c]{ll}
& \nabla_{i}\nabla_{j}H_{k}\\
= & \Delta h_{ijk}+(h_{mk}^{s}h_{is}^{l}-h_{mi}^{s}h_{ks}^{l})h_{jl}^{m}\\
& +(h_{mj}^{s}h_{is}^{l}-h_{mi}^{s}h_{js}^{l})h_{kl}^{m}+(h_{is}^{m}h_{ml}%
^{s}-H_{m}h_{il}^{m})h_{jk}^{l}\\
& -R_{\alpha\beta}[F_{i}^{\alpha}F_{l}^{\beta}h_{jk}^{l}-\frac{1}{2}%
(F_{k}^{\alpha}F_{s}^{\beta}h_{ij}^{s}-v_{s}^{\alpha}v_{j}^{\beta}h_{ik}%
^{s})]\\
& -R_{\alpha\beta\gamma\delta}[F_{l}^{\alpha}F_{i}^{\gamma}F_{m}^{\delta
}(F_{k}^{\beta}h_{j}^{ml}+F_{j}^{\beta}h_{k}^{ml})+F_{k}^{\beta}F_{j}^{\gamma
}(F_{s}^{\alpha}F_{l}^{\delta}-v_{l}^{\alpha}v_{s}^{\delta})h_{i}^{sl}]\\
& -R_{\alpha\beta\gamma\delta}[F_{i}^{\gamma}F_{l}^{\delta}(F_{s}^{\alpha
}F_{k}^{\beta}h_{j}^{sl}-v_{j}^{\alpha}v_{s}^{\beta}h_{k}^{sl})-v_{j}^{\alpha
}F_{k}^{\beta}(v_{s}^{\gamma}F_{l}^{\delta}h_{i}^{sl}+F_{i}^{\gamma}%
v_{s}^{\delta}H^{s})]\\
& -\frac{1}{2}J_{\epsilon}^{\beta}\nabla_{\sigma}R_{\delta\gamma\beta
}^{\epsilon}F_{i}^{\sigma}F_{j}^{\delta}F_{k}^{\gamma}+\frac{1}{2}%
\nabla^{\epsilon}R_{\epsilon\delta\beta\gamma}F_{i}^{\delta}v_{j}^{\beta}%
F_{k}^{\gamma}.
\end{array}
\label{b}%
\end{equation}

\end{theorem}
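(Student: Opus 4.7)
The plan is to derive the identity by the standard Simons-type commutator argument, combining two applications of the Codazzi equation (\ref{C}) with repeated use of the Ricci identity for second covariant derivatives of tensors, then simplifying all resulting curvature contractions via the Sasakian identities (\ref{L0})--(\ref{L3}) together with the Gauss equation (\ref{G}).

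First I would start from the Codazzi equation in the form
\[
\nabla_l h_{ijk} = \nabla_j h_{ilk} - R_{\beta\gamma\delta\epsilon}v_i^\beta F_k^\gamma F_l^\delta F_j^\epsilon,
\]
apply $\nabla^l$ to both sides and use the symmetry $h_{ijk}=h_{jik}=h_{ikj}$ (which holds because $L$ is Legendrian, so $h_{ijk}=-\omega_{\alpha\beta}F_k^\alpha A_{ij}^\beta$ is totally symmetric) to arrive at
\[
\Delta h_{ijk} = \nabla^l \nabla_j h_{ilk} - \nabla^l\bigl(R_{\beta\gamma\delta\epsilon}v_i^\beta F_k^\gamma F_l^\delta F_j^\epsilon\bigr).
\]
Then I would commute $\nabla^l \nabla_j = \nabla_j \nabla^l + [\nabla^l,\nabla_j]$ on the tensor $h_{ilk}$; the Ricci identity produces three Riemann-curvature terms of the schematic form $R_{j}{}^{l}{}_{*}{}^{*} h_{*lk}$ etc., which after raising an index and using the Gauss equation (\ref{G}) split into the quartic pieces in $h$ appearing in (\ref{b}) and terms involving $R_{\alpha\beta\gamma\delta}$ along $F$'s and $v$'s.

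Next, on the term $\nabla_j \nabla^l h_{ilk}$, I would apply Codazzi once more to trade $\nabla^l h_{ilk}$ for $\nabla_i H_k$ plus a further curvature correction, which accounts for the $\nabla_i \nabla_j H_k$ on the left after moving it across. For the derivative $\nabla^l R_{\beta\gamma\delta\epsilon}v_i^\beta F_k^\gamma F_l^\delta F_j^\epsilon$ I would distribute the covariant derivative, using $\nabla_\alpha F_i^\beta$-type relations and $\nabla_\alpha v_i^\beta$ (producing $h$ contracted against $\overline g$, $\omega$, and the Reeb direction), which, after contracting with (\ref{L2})--(\ref{L3}) and their consequence (\ref{L4}) used in the earlier corollary, yields the $R_{\alpha\beta}$ terms mixing $F$ and $v$ factors with $h$, and the two $\nabla R$-terms at the end of (\ref{b}) (the $J_\epsilon^\beta\nabla_\sigma R_{\delta\gamma\beta}^{\ \epsilon}$ piece and the $\nabla^\epsilon R_{\epsilon\delta\beta\gamma}$ piece, the latter coming from a second Bianchi swap).

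The main obstacle will not be any single step but the bookkeeping: one must carefully separate tangential, $J$-normal, and Reeb contributions when contracting $R_{\alpha\beta\gamma\delta}$ with mixed factors of $F_i^\alpha$ and $v_i^\alpha$, and one must keep track of the correct symmetrizations so that the commutator $[\nabla_i,\nabla_j]H_k$ on the left is absorbed consistently into the curvature identities on the right. I would organize the computation by doing the purely intrinsic commutations first (yielding the $h\cdot h\cdot h$ terms and the $R_{\alpha\beta}$ Ricci terms via the Gauss equation), and only at the end insert the Sasakian structure identities (\ref{L0})--(\ref{L4}) to collapse the ambient Riemann tensor contractions into the final form displayed in (\ref{b}).
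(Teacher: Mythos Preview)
Your plan is essentially the paper's own proof run in reverse: the paper starts from $\nabla_i\nabla_j H_k$, applies Codazzi to the inner derivative, commutes $\nabla_i\nabla_m\to\nabla_m\nabla_i$ via the Ricci identity, applies Codazzi once more to reach $\Delta h_{ijk}$, and then expands the resulting ambient-curvature terms using $\nabla_i F_k^\gamma=-h_{ik}^s v_s^\gamma$, $\nabla_i v_l^\beta=-g_{il}\lambda^\beta+h_{il}^s F_s^\beta$, the Gauss equation, and the Sasakian identities (\ref{L0}) and (\ref{L2}). The only point to watch in your direction is that you land on $\nabla_j\nabla_i H_k$ rather than $\nabla_i\nabla_j H_k$, so the extra commutator $[\nabla_i,\nabla_j]H_k$ you mention must indeed be carried along; starting from $\nabla_i\nabla_j H_k$ as the paper does avoids that small additional bookkeeping step.
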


\begin{proof}
By using the Codazzi equation (\ref{C}) and the rule for interchanging
derivatives, we then obtain%
\[%
\begin{array}
[c]{lll}%
\nabla_{i}\nabla_{j}H_{k} & = & \nabla_{i}\nabla_{j}h_{mk}^{m}\\
& = & \nabla_{i}\nabla_{m}h_{jk}^{m}-g^{ml}\nabla_{i}(R_{\beta\gamma
\delta\epsilon}v_{l}^{\beta}F_{k}^{\gamma}F_{j}^{\delta}F_{m}^{\epsilon})\\
& = & \nabla_{m}\nabla_{i}h_{jk}^{m}-R_{kim}^{l}h_{jl}^{m}-R_{jim}^{l}%
h_{lk}^{m}-R_{il}h_{jk}^{l}\\
&  & -g^{ml}\nabla_{i}(R_{\beta\gamma\delta\epsilon}v_{l}^{\beta}F_{k}%
^{\gamma}F_{j}^{\delta}F_{m}^{\epsilon})\\
& = & \Delta h_{ijk}-R_{kim}^{l}h_{jl}^{m}-R_{jim}^{l}h_{lk}^{m}-R_{il}%
h_{jk}^{l}\\
&  & -g^{ml}\nabla_{i}(R_{\beta\gamma\delta\epsilon}v_{l}^{\beta}F_{k}%
^{\gamma}F_{j}^{\delta}F_{m}^{\epsilon})\\
&  & -g^{ml}\nabla_{m}(R_{\beta\gamma\delta\epsilon}v_{j}^{\beta}F_{k}%
^{\gamma}F_{i}^{\delta}F_{l}^{\epsilon}).
\end{array}
\]
Gauss equation (\ref{G}) gives
\[%
\begin{array}
[c]{c}%
R_{kim}^{l}h_{jl}^{m}=R_{\alpha\beta\gamma\delta}F_{l}^{\alpha}F_{k}^{\beta
}F_{i}^{\gamma}F_{m}^{\delta}h_{j}^{ml}+(h_{mi}^{s}h_{ks}^{l}-h_{mk}^{s}%
h_{is}^{l})h_{jl}^{m}.
\end{array}
\]
By the rule of derivatives $\nabla_{i}F_{k}^{\gamma}=A_{ik}^{\gamma}%
=-h_{ik}^{s}v_{s}^{\gamma}$, $\nabla_{i}v_{l}^{\beta}=-g_{il}\lambda^{\beta
}+h_{il}^{s}F_{s}^{\beta}$ and (\ref{L0})%
\[%
\begin{array}
[c]{lll}%
R_{\beta\gamma\delta\epsilon}\lambda^{\beta}F_{k}^{\gamma}F_{j}^{\delta}%
F_{m}^{\epsilon} & = & (g_{\gamma\epsilon}\lambda_{\delta}-g_{\gamma\delta
}\lambda_{\epsilon})F_{k}^{\gamma}F_{j}^{\delta}F_{m}^{\epsilon}\\
& = & g_{km}\lambda_{j}-g_{kj}\lambda_{m}=0.
\end{array}
\]
Also from (\ref{L2})%
\[%
\begin{array}
[c]{c}%
g^{ml}R_{\beta\gamma\delta\epsilon}v_{l}^{\beta}F_{m}^{\epsilon}=\frac{1}%
{2}J_{\sigma}^{\beta}R_{\delta\gamma\beta}^{\sigma}=\frac{1}{2}(R_{\gamma
}^{\epsilon}\omega_{\epsilon\delta}+(2n-1)\omega_{\delta\gamma})
\end{array}
\]
and%
\[%
\begin{array}
[c]{c}%
g^{ml}\nabla_{m}R_{\beta\gamma\delta\epsilon}F_{l}^{\epsilon}=g^{ml}%
\nabla_{\sigma}R_{\beta\gamma\delta\epsilon}F_{m}^{\sigma}F_{l}^{\epsilon
}=\frac{1}{2}\nabla_{\sigma}R_{\beta\gamma\delta\epsilon}g^{\epsilon\sigma
}=-\frac{1}{2}\nabla^{\epsilon}R_{\epsilon\delta\beta\gamma}.
\end{array}
\]
All these will imply
\[%
\begin{array}
[c]{lll}%
\nabla_{i}\nabla_{j}H_{k} & = & \Delta h_{ijk}-R_{\alpha\beta\gamma\delta
}F_{l}^{\alpha}F_{i}^{\gamma}F_{m}^{\delta}(F_{k}^{\beta}h_{j}^{ml}%
+F_{j}^{\beta}h_{k}^{ml})+(h_{mk}^{s}h_{is}^{l}-h_{mi}^{s}h_{ks}^{l}%
)h_{jl}^{m}\\
&  & +(h_{mj}^{s}h_{is}^{l}-h_{mi}^{s}h_{js}^{l})h_{kl}^{m}-(R_{\alpha\beta
}F_{i}^{\alpha}F_{l}^{\beta}+H_{m}h_{il}^{m}-h_{is}^{m}h_{ml}^{s})h_{jk}^{l}\\
&  & -\frac{1}{2}J_{\epsilon}^{\beta}\nabla_{\sigma}R_{\delta\gamma\beta
}^{\epsilon}F_{i}^{\sigma}F_{j}^{\delta}F_{k}^{\gamma}-R_{\beta\gamma
\delta\epsilon}F_{k}^{\gamma}F_{j}^{\delta}(F_{s}^{\beta}F_{l}^{\epsilon
}-v_{l}^{\beta}v_{s}^{\epsilon})h_{i}^{sl}\\
&  & +\frac{1}{2}R_{\alpha\beta}(F_{k}^{\alpha}F_{s}^{\beta}h_{ij}^{s}%
-v_{s}^{\alpha}v_{j}^{\beta}h_{ik}^{s})+\frac{1}{2}\nabla^{\epsilon
}R_{\epsilon\delta\beta\gamma}F_{i}^{\delta}v_{j}^{\beta}F_{k}^{\gamma}\\
&  & -R_{\beta\gamma\delta\epsilon}[F_{i}^{\delta}F_{l}^{\epsilon}%
(F_{s}^{\beta}F_{k}^{\gamma}h_{j}^{sl}-v_{j}^{\beta}v_{s}^{\gamma}h_{k}%
^{sl})-v_{j}^{\beta}F_{k}^{\gamma}(v_{s}^{\delta}F_{l}^{\epsilon}h_{i}%
^{sl}+F_{i}^{\delta}v_{s}^{\epsilon}H^{s})],
\end{array}
\]
which yields the Simons type identity (\ref{b}).
\end{proof}

\begin{proposition}
If a smooth family of Legendrian immersions into a Sasakian manifold
$(M,\lambda,g,\mathbf{T},J)$ evolves according to (\ref{1}) with
$H_{j}=-\nabla_{j}f$, then

\begin{enumerate}
\item
\begin{equation}%
\begin{array}
[c]{c}%
\frac{\partial}{\partial t}g_{ij}=-2H^{k}h_{kij}.
\end{array}
\label{i}%
\end{equation}

\item
\begin{equation}%
\begin{array}
[c]{c}%
\frac{\partial}{\partial t}H_{j}=\Delta H_{j}+2H_{j}+\frac{1}{2}R_{\alpha
\beta}(v_{l}^{\alpha}v_{j}^{\beta}-F_{l}^{\alpha}F_{j}^{\beta})H^{l}%
+(h_{lm}^{k}h_{jk}^{m}-H_{m}h_{jl}^{m})H^{l}.
\end{array}
\label{j}%
\end{equation}

\item
\begin{equation}%
\begin{array}
[c]{ll}
& \frac{\partial}{\partial t}h_{ijk}\\
= & \Delta h_{ijk}-(h_{lmk}h_{ij}^{m}+h_{lmj}h_{ik}^{m}+h_{lmi}h_{jk}%
^{m})H^{l}+2g_{jk}H_{i}+g_{ik}H_{j}\\
& +(h_{mk}^{s}h_{is}^{l}-h_{mi}^{s}h_{ks}^{l})h_{jl}^{m}+(h_{mj}^{s}h_{is}%
^{l}-h_{mi}^{s}h_{js}^{l})h_{kl}^{m}+h_{is}^{m}h_{ml}^{s}h_{jk}^{l}\\
& -R_{\alpha\beta}[F_{i}^{\alpha}F_{l}^{\beta}h_{jk}^{l}-\frac{1}{2}%
(F_{k}^{\alpha}F_{s}^{\beta}h_{ij}^{s}-v_{s}^{\alpha}v_{j}^{\beta}h_{ik}%
^{s})]+R_{\alpha\beta\gamma\delta}v_{k}^{\alpha}F_{j}^{\beta}v_{l}^{\gamma
}F_{i}^{\delta}H^{l}\\
& -R_{\alpha\beta\gamma\delta}[F_{l}^{\alpha}F_{i}^{\gamma}F_{m}^{\delta
}(F_{k}^{\beta}h_{j}^{ml}+F_{j}^{\beta}h_{k}^{ml})+F_{k}^{\beta}F_{j}^{\gamma
}(F_{s}^{\alpha}F_{l}^{\delta}-v_{l}^{\alpha}v_{s}^{\delta})h_{i}^{sl}]\\
& -R_{\alpha\beta\gamma\delta}[F_{i}^{\gamma}F_{l}^{\delta}(F_{s}^{\alpha
}F_{k}^{\beta}h_{j}^{sl}-v_{j}^{\alpha}v_{s}^{\beta}h_{k}^{sl})-v_{j}^{\alpha
}F_{k}^{\beta}(v_{s}^{\gamma}F_{l}^{\delta}h_{i}^{sl}+F_{i}^{\gamma}%
v_{s}^{\delta}H^{s})]\\
& -\frac{1}{2}J_{\epsilon}^{\beta}\nabla_{\sigma}R_{\delta\gamma\beta
}^{\epsilon}F_{i}^{\sigma}F_{j}^{\delta}F_{k}^{\gamma}+\frac{1}{2}%
\nabla^{\epsilon}R_{\epsilon\delta\beta\gamma}F_{i}^{\delta}v_{j}^{\beta}%
F_{k}^{\gamma}.
\end{array}
\label{k}%
\end{equation}

\end{enumerate}
\end{proposition}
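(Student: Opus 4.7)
The proposition is the specialization of the general evolution formulas in Lemma \ref{4} to the Legendrian mean curvature flow, which is identified by the relation $H_j=-\nabla_j f$. The plan is to substitute $\nabla_\cdot f=-H_\cdot$ into each of (\ref{2}), (\ref{5}) and (\ref{3}), and then, only in the evolution of $h_{ijk}$, invoke the Simons-type identity (\ref{b}) to convert the resulting third-order term $\nabla_j\nabla_k H_i$ into $\Delta h_{ijk}$ plus lower-order curvature and quartic $h$-contributions.

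Parts (1) and (2) are pure substitutions. Equation (\ref{i}) is immediate from (\ref{2}) upon writing $\nabla^k f=-H^k$. For (\ref{j}) I use the second form of (\ref{5}): the two leading terms $-\Delta\nabla_j f-2\nabla_j f$ become $\Delta H_j+2H_j$; the Ricci piece $\tfrac12 R_{\alpha\beta}(F_l^\alpha F_j^\beta-v_l^\alpha v_j^\beta)\nabla^l f$ flips sign to $\tfrac12 R_{\alpha\beta}(v_l^\alpha v_j^\beta-F_l^\alpha F_j^\beta)H^l$; and the quadratic piece becomes $(h_{lm}^k h_{jk}^m-H_m h_{jl}^m)H^l$, producing exactly (\ref{j}).

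For (\ref{k}) I start from (\ref{3}) with $\nabla_i f=-H_i$, which yields
\[
\frac{\partial}{\partial t}h_{ijk}=\nabla_j\nabla_k H_i-H^l(h_{lim}h_{jk}^m+h_{lkm}h_{ij}^m)+2g_{ik}H_j+g_{ij}H_k+H^l R_{\alpha\beta\gamma\delta}\,v_i^\alpha F_k^\beta v_l^\gamma F_j^\delta.
\]
I then apply the Simons-type identity (\ref{b}) under the index relabeling $(i,j,k)\mapsto(j,k,i)$, which gives $\nabla_j\nabla_k H_i=\Delta h_{jki}+\textrm{(quartic-}h\text{)}+\textrm{(Riemann, Ricci, }\nabla R\text{)}$. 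Using the full symmetry of the Legendrian cubic form $h_{jki}=h_{ijk}$, the leading term is $\Delta h_{ijk}$. The three quartic-$h$ contributions from the permuted (\ref{b}) combine with the two $H^l h h$-terms already present from (\ref{3}) to produce the three-term expression $-(h_{lmk}h_{ij}^m+h_{lmj}h_{ik}^m+h_{lmi}h_{jk}^m)H^l$ together with the three commutator-type quartic contractions displayed in (\ref{k}); the single Riemann term from (\ref{3}) merges with the Riemann block coming out of (\ref{b}) to give the long curvature expression in (\ref{k}); and the Ricci contributions from (\ref{b}), together with the metric terms already present, rearrange via the contracted Sasakian identities (\ref{L0})--(\ref{L3}) into the stated Ricci block and the metric terms $2g_{jk}H_i+g_{ik}H_j$.

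The sole obstacle is the tensor bookkeeping in part (3): under the permutation $(i,j,k)\mapsto(j,k,i)$ every contraction of $F^\bullet_\bullet,\,v^\bullet_\bullet$ in (\ref{b}) must be relabeled consistently, and repeated application of the full symmetry of $h_{ijk}$ together with the contracted curvature identities (\ref{L2})--(\ref{L3}) is needed to collapse the Ricci and Riemann terms into the precise grouping recorded in (\ref{k}). No new geometric input beyond Lemma \ref{4} and the Simons-type identity (\ref{b}) is required; the derivation is entirely tensorial.
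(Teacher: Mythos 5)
Your proposal is correct and follows exactly the route the paper intends: the proposition appears there without proof, as an immediate specialization of Lemma~\ref{4} (equations (\ref{2}), (\ref{3}), (\ref{5})) under $\nabla f=-H$, combined with the Simons-type identity (\ref{b}) applied to $\nabla_j\nabla_k H_i$ after the cyclic relabeling you describe. The one point in your part (3) that deserves an explicit check is the passage from the metric terms $2g_{ik}H_j+g_{ij}H_k$ produced by (\ref{3}) to the grouping $2g_{jk}H_i+g_{ik}H_j$ appearing in (\ref{k}): since the contractions of (\ref{L0}) with $F$ and $v$ vanish on a Legendrian submanifold, this regrouping is not supplied by the Sasakian identities alone but rests on the full $(i,j,k)$-symmetry of the total right-hand side, so it should be verified in the final bookkeeping rather than asserted.
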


\begin{corollary}
If $F_{t}:L\rightarrow(M,\lambda,g,\mathbf{T},J)$, $t\in\lbrack0,\varepsilon)$
is a smooth family of Legendrian immersions into a Sasakian $\eta$-Einstein
manifold with $\eta$-Einstein constant $K+2$ that evolves according to
(\ref{1}) with $H=-df$, then

\begin{enumerate}
\item The mean curvature form $H$ satisfies
\begin{equation}%
\begin{array}
[c]{c}%
\frac{\partial}{\partial t}\left\vert H\right\vert ^{2}=\Delta\left\vert
H\right\vert ^{2}-2\left\vert \nabla H\right\vert ^{2}+4\left\vert
H\right\vert ^{2}+2H^{i}H^{j}h_{im}^{l}h_{jl}^{m}.
\end{array}
\label{H}%
\end{equation}

\item The second fundamental tensor $A$ satisfies%
\begin{equation}%
\begin{array}
[c]{lll}%
\frac{\partial}{\partial t}\left\vert A\right\vert ^{2} & = & \Delta\left\vert
A\right\vert ^{2}-2\left\vert \nabla A\right\vert ^{2}-2(K+2)\left\vert
A\right\vert ^{2}+2h_{is}^{m}h_{ml}^{s}h_{jk}^{l}h^{ijk}\\
&  & +6\left\vert H\right\vert ^{2}-2R_{\alpha\beta\gamma\delta}%
(4F_{m}^{\alpha}F_{j}^{\beta}+v_{m}^{\alpha}v_{j}^{\beta})F_{k}^{\gamma}%
F_{l}^{\delta}h_{i}^{ml}h^{ijk}\\
&  & +(\nabla^{\epsilon}R_{\epsilon\delta\beta\gamma}F_{i}^{\delta}%
v_{j}^{\beta}-J_{\epsilon}^{\beta}\nabla_{\sigma}R_{\delta\gamma\beta
}^{\epsilon}F_{i}^{\sigma}F_{j}^{\delta})F_{k}^{\gamma}h^{ijk}.
\end{array}
\label{A}%
\end{equation}

\item The volume form $d\mu$ satisfies the evolution equation
\begin{equation}%
\begin{array}
[c]{c}%
\frac{\partial}{\partial t}d\mu=-\left\vert H\right\vert ^{2}d\mu.
\end{array}
\label{V}%
\end{equation}

\end{enumerate}
\end{corollary}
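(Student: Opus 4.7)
The plan is to derive (\ref{H}), (\ref{A}), and (\ref{V}) by substituting the general evolution equations (\ref{i})--(\ref{k}) into the time derivatives of $|H|^2 = g^{ij} H_i H_j$, $|A|^2 = h_{ijk} h^{ijk}$, and $d\mu$, and then exploiting two structural facts: the $\eta$-Einstein form $R_{\alpha\beta} = (K+2) g_{\alpha\beta} + (2n - K - 2) \lambda_\alpha \lambda_\beta$, and the total symmetry $h_{ijk} = h_{(ijk)}$ of the second fundamental form of a Legendrian submanifold in a Sasakian manifold. Since $F_j^\alpha$ and $v_j^\alpha$ lie in $\ker\lambda$, every Ricci contraction encountered below reduces to the scalar $K+2$ times an induced-metric factor.

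The volume identity (\ref{V}) is immediate: $\partial_t d\mu = \tfrac{1}{2} g^{ij} (\partial_t g_{ij}) \, d\mu$, and (\ref{i}) gives $g^{ij} \partial_t g_{ij} = -2 H^k h_{kij} g^{ij} = -2 H^k H_k$. For (\ref{H}), I would expand $\partial_t |H|^2 = (\partial_t g^{ij}) H_i H_j + 2 H^j \partial_t H_j$, using $\partial_t g^{ij} = 2 H^k h_k{}^{ij}$ and (\ref{j}). The Ricci piece in (\ref{j}) vanishes because $R_{\alpha\beta}(v_l^\alpha v_j^\beta - F_l^\alpha F_j^\beta) = (K+2)(g_{lj} - g_{lj}) = 0$. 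The two cubic-in-$H$ terms $2 H^k h_{kij} H^i H^j$ (from the metric) and $-2 H^j H^l H_m h_{jl}^m$ (from (\ref{j})) cancel by total symmetry of $h$. The Bochner identity $2 H^j \Delta H_j = \Delta |H|^2 - 2|\nabla H|^2$ produces the diffusion terms, and the remaining pieces assemble to $4|H|^2 + 2 H^i H^j h_{im}^l h_{jl}^m$.

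The real work is (\ref{A}). Differentiating $|A|^2 = g^{ia} g^{jb} g^{kc} h_{ijk} h_{abc}$ and using total symmetry of $h$ to equate the three metric contributions gives $\partial_t |A|^2 = 2 h^{ijk} \partial_t h_{ijk} + 6 H^l h_l{}^{ia} h_{ibc} h_a{}^{bc}$. I would substitute (\ref{k}) and organize the result into five groups. The Laplacian contribution yields $\Delta |A|^2 - 2 |\nabla A|^2$ by Bochner. The pure-metric terms $2 g_{jk} H_i + g_{ik} H_j$ contracted with $2 h^{ijk}$ collapse to $6|H|^2$ via the trace identity $h^{ijk} g_{jk} = H^i$. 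For the Ricci terms, after applying $R_{\alpha\beta} F_i^\alpha F_l^\beta = (K+2) g_{il}$ and $R_{\alpha\beta} v_s^\alpha v_j^\beta = (K+2) g_{sj}$, the three contractions add up to $-2(K+2)|A|^2 + (K+2)|A|^2 - (K+2)|A|^2 = -2(K+2)|A|^2$. The three $H$-linear trilinear terms in (\ref{k}) each contract with $h^{ijk}$ to the same scalar invariant by total symmetry, summing to $-6 H^l h^{ijk} h_{lmi} h_{jk}^m$; this is precisely the same invariant as $+6 H^l h_l{}^{ia} h_{ibc} h_a{}^{bc}$ coming from the metric variation (their contraction diagrams are graph-isomorphic once $h$ is viewed as a symmetric tensor), so they cancel. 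What survives of the quartic-in-$h$ part is the single term $+2 h_{is}^m h_{ml}^s h_{jk}^l h^{ijk}$. Finally, the ambient Riemann and $\nabla$-Riemann terms in (\ref{k}) appear linearly in $h_{ijk}$, so multiplication by $2 h^{ijk}$ reproduces verbatim the last two lines of (\ref{A}).

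The main obstacle is purely combinatorial: reliably tracking the nine-or-so cubic-in-$h$ contractions arising in (\ref{k}) and verifying the delicate cancellation of $H$-linear terms between $\partial_t h_{ijk}$ and $\partial_t g^{ij}$. All such cancellations rest on the total symmetry $h_{ijk} = h_{(ijk)}$, itself a consequence of the Legendrian conditions $F^* \lambda = 0$ and $F^* d\lambda = 0$ applied to the defining identity $h_{ijk} = -\omega_{\alpha\beta} F_k^\alpha A_{ij}^\beta$.
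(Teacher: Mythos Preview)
Your overall strategy---differentiate $|H|^{2}$, $|A|^{2}$, and $d\mu$ using (\ref{i})--(\ref{k}) and then simplify via the $\eta$-Einstein condition together with the total symmetry of $h_{ijk}$---is exactly what the corollary intends, and your arguments for (\ref{V}) and (\ref{H}) are correct.

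For (\ref{A}) there are two concrete problems. First, the $\eta$-Einstein condition in Definition~\ref{d3} is $R_{\alpha\beta}=K g_{\alpha\beta}+(2n-K)\lambda_{\alpha}\lambda_{\beta}$; the constant $K+2$ is the \emph{transverse} Einstein constant, not the coefficient of $g$ in the ambient Ricci tensor. Since $F_{i}^{\alpha},v_{i}^{\alpha}\in\ker\lambda$, each Ricci contraction yields $K$, not $K+2$, and your three Ricci contributions sum to $-2K|A|^{2}$ rather than $-2(K+2)|A|^{2}$. You are therefore missing a term $-4|A|^{2}$. Second, your assertion that ``the ambient Riemann and $\nabla$-Riemann terms in (\ref{k}) appear linearly in $h_{ijk}$'' is false: apart from the two $\nabla R$ pieces, every curvature term in (\ref{k}) carries an explicit factor $h_{j}^{ml}$, $h_{i}^{sl}$, or $H^{s}$, so after multiplication by $2h^{ijk}$ they are quadratic in $h$ or linear in $H$. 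They do not ``reproduce verbatim'' the last two lines of (\ref{A}).

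What you still need to do is this. The two $H$-linear Riemann terms cancel against each other once you symmetrize $h^{ijk}$ in $j,k$ and use the antisymmetry $R_{\alpha\beta\gamma\delta}=-R_{\alpha\beta\delta\gamma}$; this step is missing from your outline. The remaining mixed $v$--$F$ Riemann terms (those of type $R_{vFvF}$, $R_{vFFv}$) must then be rewritten using the Sasakian curvature identity (\ref{L1}), which converts $R\cdot\omega$ combinations into $R_{FFFF}$-type pieces plus explicit metric terms. It is precisely those metric terms that supply the missing $-4|A|^{2}$, and only after this reduction do the surviving curvature contributions collapse to the compact form $-2R_{\alpha\beta\gamma\delta}(4F_{m}^{\alpha}F_{j}^{\beta}+v_{m}^{\alpha}v_{j}^{\beta})F_{k}^{\gamma}F_{l}^{\delta}h_{i}^{ml}h^{ijk}$ stated in (\ref{A}).
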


Let $F_{0}:L_{0}\rightarrow(M,\lambda,g,\mathbf{T},J)$ be a Legendrian
immersion with the mean curvature form $H_{0}$. We consider a smooth
deformation of Legendrian immersions into a Sasakian manifold
\[%
\begin{array}
[c]{c}%
F_{t}:L^{n}\rightarrow(M,\lambda,g,\mathbf{T},J)
\end{array}
\]
which satisfies the following equations.
\begin{equation}%
\begin{array}
[c]{c}%
\frac{d}{dt}F_{t}(x)=\nabla^{k}fv_{k}+2f\mathbf{T}%
\end{array}
\label{7}%
\end{equation}
where $f_{t}:L\rightarrow\mathbb{R}$, $t\in\lbrack0,\varepsilon)$ is a smooth
family of functions and $v_{k}$ are defined by
\[%
\begin{array}
[c]{c}%
v_{k}=JF_{k}=J_{\alpha}^{\beta}F_{k}^{\alpha}\frac{\partial}{\partial
y^{\beta}}=v_{k}^{\beta}\frac{\partial}{\partial y^{\beta}}.
\end{array}
\]

\begin{definition}
If $F_{0}:L_{0}\rightarrow(M,\lambda,g,\mathbf{T},J)$ is a Legendrian
immersion such that the mean curvature form $H$ is exact, then any function
$\alpha_{0}$ with $H_{0}=d\alpha_{0}$ is called the Legendrian angle of
$F_{0}(L_{0})$.
\end{definition}

Now we recall the following definitions.

\begin{definition}
\label{d3} A Sasakian manifold $M^{2n+1}$ is called $\eta$-Einstein if there
is a constant $K$ such that the Ricci curvature
\[
Ric=Kg+(2n-K)\eta\otimes\eta
\]
and then
\[
Ric^{T}=(K+2)g^{T}%
\]
which is also called the transverse K\"{a}hler-Einstein or pseudo-Einstein.
\end{definition}

\begin{lemma}
(\cite[Corollary 3.3.]{s2}) If $F_{t}:L\rightarrow(M,\lambda,g,\mathbf{T},J)$,
$t\in\lbrack0,\varepsilon)$ is a smooth family of Legendrian immersions into
an $\eta$-Einstein Sasakian manifold that evolves according to (\ref{1}), then
the mean curvature form $H$ satisfies
\begin{equation}%
\begin{array}
[c]{c}%
\frac{\partial}{\partial t}H_{i}=-\nabla_{i}(\Delta f+(K+2)f),
\end{array}
\label{21}%
\end{equation}
and the cohomology class of $H$ is fixed. Moreover, if $H$ is exact at $t=0$,
then under the deformation (\ref{1}), there exists a smooth family of
functions $\alpha$ on $L$, smoothly depending on $t\in\lbrack0,\varepsilon)$
such that
\begin{equation}%
\begin{array}
[c]{c}%
\frac{\partial}{\partial t}\alpha=-(\Delta f+(K+2)f)
\end{array}
\label{24}%
\end{equation}
and
\begin{equation}
H=d\alpha\label{25}%
\end{equation}
for all $t$ and $\alpha$ is unique up to adding a function depending only on
$t$.
\end{lemma}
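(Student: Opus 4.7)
The plan is to run the already-derived general evolution formula for the mean curvature form under the deformation (\ref{1}) through the $\eta$-Einstein reduction, and then build the primitive $\alpha$ by direct integration in $t$. Concretely, I would start from (\ref{5}) in Lemma~\ref{4}, which gives
\[
\frac{\partial}{\partial t}H_{j} = -\nabla_{j}\Delta f-2\nabla_{j}f-\tfrac{1}{2}R_{\alpha\beta}\bigl(F_{l}^{\alpha}F_{j}^{\beta}+v_{l}^{\alpha}v_{j}^{\beta}\bigr)\nabla^{l}f,
\]
and then simplify the Ricci contraction using the $\eta$-Einstein form $R_{\alpha\beta}=K g_{\alpha\beta}+(2n-K)\lambda_{\alpha}\lambda_{\beta}$. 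The first key observation is that the Legendrian condition $F^{\ast}\lambda=0$ gives $\lambda_{\alpha}F_{l}^{\alpha}=0$, and since $\lambda\circ J=0$ on the contact distribution, also $\lambda_{\alpha}v_{l}^{\alpha}=\lambda(JF_{l})=0$; hence the $\eta\otimes\eta$ piece drops out. The second observation is the adapted-metric identity $g(JX,JY)=g(X,Y)-\lambda(X)\lambda(Y)$ which, applied to $X=F_{l},Y=F_{j}$, yields $g_{\alpha\beta}v_{l}^{\alpha}v_{j}^{\beta}=g_{lj}$. Combining these reduces the curvature term to $K\nabla_{j}f$, and the evolution of $H_{j}$ collapses to
\[
\frac{\partial}{\partial t}H_{j}=-\nabla_{j}\bigl(\Delta f+(K+2)f\bigr),
\]
which is the first assertion.

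For the cohomology statement, the right-hand side is manifestly of the form $-d(\Delta f+(K+2)f)$, so $\partial_{t}H$ is exact. Integrating in $t$ shows $H_{t}-H_{0}$ is exact for every $t$, so the de Rham class $[H]\in H^{1}(L;\mathbb{R})$ is independent of $t$. In particular, if $H_{0}=d\alpha_{0}$, then $H_{t}$ is exact for all $t\in[0,\varepsilon)$.

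To produce a smooth primitive satisfying the prescribed $t$-derivative, I would simply define
\[
\alpha(x,t):=\alpha_{0}(x)-\int_{0}^{t}\bigl(\Delta_{g(s)}f(x,s)+(K+2)f(x,s)\bigr)\,ds.
\]
Smoothness in $(x,t)$ is inherited from the smoothness of $f$ and $g(t)$; by construction $\partial_{t}\alpha=-(\Delta f+(K+2)f)$; and differentiating under the integral together with the identity $\partial_{s}H=-d(\Delta f+(K+2)f)$ just established gives $d\alpha(\cdot,t)=H_{0}+\int_{0}^{t}\partial_{s}H\,ds=H_{t}$. Uniqueness is immediate: if $\widetilde{\alpha}$ is another such primitive, then $d(\alpha-\widetilde{\alpha})=0$ forces $\alpha-\widetilde{\alpha}$ to depend only on $t$ (assuming $L$ connected; in general, on each component), and then $\partial_{t}(\alpha-\widetilde{\alpha})=0$ forces this function to be constant.

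The only substantive step is the $\eta$-Einstein simplification of the Ricci term; everything else is bookkeeping. The mild subtlety worth flagging is the uniqueness clause, where the prescribed $\partial_{t}\alpha$ actually pins down $\alpha$ up to an additive constant rather than an arbitrary function of $t$; the statement is consistent with this since a constant is, in particular, a function of $t$ and the extra rigidity comes from additionally imposing the specific value of $\partial_{t}\alpha$.
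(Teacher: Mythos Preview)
Your argument is correct and is precisely the intended derivation: the paper does not give an independent proof of this lemma (it is quoted from \cite{s2}), but it is set up so that one feeds the $\eta$-Einstein identity $R_{\alpha\beta}=Kg_{\alpha\beta}+(2n-K)\lambda_{\alpha}\lambda_{\beta}$ into the general evolution formula (\ref{5}) of Lemma~\ref{4}, exactly as you do, and your reductions $\lambda_{\alpha}F_{l}^{\alpha}=\lambda_{\alpha}v_{l}^{\alpha}=0$ and $g_{\alpha\beta}v_{l}^{\alpha}v_{j}^{\beta}=g_{lj}$ are the right ones. Your observation on the uniqueness clause is also accurate: imposing both (\ref{24}) and (\ref{25}) pins $\alpha$ down up to an additive constant, which is of course a special case of ``a function depending only on $t$.''
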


\begin{definition}
\label{6} Let $L_{0}$ be a Legendrian submanifold of dimension $n$ in an
$\eta$-Einstein Sasakian $(2n+1)$-manifold $(M,\lambda,g,\mathbf{T},J)$. Then
the Legendrian mean curvature flow is the solution of
\begin{equation}%
\begin{array}
[c]{c}%
\frac{d}{dt}F_{t}=-\nabla^{k}\alpha v_{k}-2\alpha\mathbf{T}%
\end{array}
\label{26}%
\end{equation}
where $\alpha$ is the Legendrian angle of $L_{t}=F_{t}(L)$. Then the parabolic
flow (\ref{1}) turns into the following
\begin{equation}%
\begin{array}
[c]{c}%
\frac{d}{dt}F_{t}=\mathbf{H}-2\alpha\mathbf{T}%
\end{array}
\label{27}%
\end{equation}
with the Legendrian mean curvature vector
\[%
\begin{array}
[c]{c}%
\mathbf{H}=-\nabla^{k}\alpha v_{k}.
\end{array}
\]

\end{definition}

\section{Convergence of Legendrian mean curvature flow}

In this section, we will prove the following Theorem \ref{22} and Theorem
\ref{Thm 3.2}, we have the Theorem \ref{Thm 3.1}. As consequences, we have
Theorem \ref{23} and Theorem \ref{Thm 3.1}, respectively.

Before we prove the main theorems, we will prove the long-time behavior of
solutions of the Legendrian mean curvature flow. We recall the following
Harnack inequality for the positive solution for the heat equation on a
compact manifold in \cite[Theorem 2.1.]{cao}.

\begin{proposition}
\label{Harnack} Let $L$ be a compact manifold of dimension $n$ and let
$g_{ij}(t),$ $0\leq t<\infty,$ be a family of Riemannian metrics on $L$ with
the following properties:%
\[%
\begin{array}
[c]{ll}%
\mathrm{(a)} & C_{1}g_{ij}(0)\leq g_{ij}(t)\leq C_{2}g_{ij}(0),\\
\mathrm{(b)} & |\frac{\partial}{\partial t}g_{ij}(t)|\leq C_{3}g_{ij}(0),\\
\mathrm{(c)} & R_{ij}(t)\geq-C_{4}g_{ij}(0),
\end{array}
\]
where $C_{1},$ $C_{2},$ $C_{3}$ and $C_{4}$ are positive constants independent
of $t$. Let $\Delta_{t}$ denotes the Laplace-Beltrami operator of the metric
$g_{ij}(t)$ and $\varphi(x,t)$ is a positive solution for the heat equation
\begin{equation}%
\begin{array}
[c]{c}%
(\frac{\partial}{\partial t}-\Delta_{t})\varphi(x,t)=0
\end{array}
\label{he}%
\end{equation}
on $L\times\lbrack0,\infty)$, then for any $\alpha>1$ we have%
\[%
\begin{array}
[c]{c}%
\underset{x\in L}{\sup}\varphi(x,t_{1})\leq\underset{x\in L}{\inf}%
\varphi(x,t_{2})(\frac{t_{2}}{t_{1}})^{\frac{n}{2}}\exp\{ \frac{C_{2}^{2}%
d^{2}}{4(t_{2}-t_{1})}+[\frac{n\alpha C_{4}}{2(\alpha-1)}+C_{2}C_{3}%
(n+E)](t_{2}-t_{1})\},
\end{array}
\]
here $d$ is the diameter of $L$ measured by the metric $g_{ij}(0)$,
$E=\sup|\nabla^{2}\log\varphi|$ and $0<t_{1}<t_{2}<\infty$.
\end{proposition}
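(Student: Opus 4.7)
The plan is to establish a Li-Yau--Hamilton type gradient estimate adapted to the time-varying metric setting and then integrate it along a space-time path, following the evolving-metric strategy of Cao. First, I would set $u=\log\varphi$, so that the heat equation $(\partial_t-\Delta_t)\varphi=0$ yields
\[
u_t=\Delta_t u+|\nabla u|_{g(t)}^{2}.
\]
Here condition $\mathrm{(a)}$ makes $g(t)$ comparable to $g(0)$, so that norms, gradients, and distances can be freely transferred between the two metrics; $\mathrm{(b)}$ bounds the drift $\partial_t g^{ij}$ appearing whenever one commutes the time derivative past the Laplacian; and $\mathrm{(c)}$ supplies the Ricci lower bound required by the Bochner identity.

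Next I would introduce the Li-Yau quantity $F=\alpha|\nabla u|^{2}-u_t$, with $\alpha>1$, and compute $(\partial_t-\Delta_t)F$ on $L\times(0,T]$. Using the $u_t$-equation and the Bochner identity, the leading terms produce the good square $2(\Delta u)^{2}/n$ together with $-2R_{ij}u^{i}u^{j}$, while the time-dependence of the metric generates extra terms of schematic form $(\partial_t g^{ij})\nabla^{2}_{ij}u$ and $(\partial_t g^{ij})u_i u_j$. These are controlled by $C_{2}C_{3}(n+E)$ and $C_{2}C_{3}|\nabla u|^{2}$ respectively through $\mathrm{(a)}$, $\mathrm{(b)}$, and the auxiliary bound $E=\sup|\nabla^{2}\log\varphi|$, while the Ricci term is absorbed via $\mathrm{(c)}$ into $-C_{4}|\nabla u|^{2}$. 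Applying the maximum principle to $tF$ on $L\times[0,T]$ then yields a pointwise Li-Yau estimate of the form
\[
\alpha|\nabla u|^{2}-u_t\le \frac{n\alpha^{2}}{2t}+\frac{n\alpha C_{4}}{2(\alpha-1)}+C_{2}C_{3}(n+E).
\]

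Finally, for $0<t_1<t_2$ and $x_1,x_2\in L$, let $\gamma:[t_1,t_2]\to L$ be the $g(0)$-minimizing geodesic from $x_1$ to $x_2$, reparametrized so that $|\dot\gamma|_{g(0)}=d/(t_2-t_1)$. Setting $\eta(\tau)=u(\gamma(\tau),\tau)$, one has
\[
\eta(t_2)-\eta(t_1)=\int_{t_1}^{t_2}\bigl(u_\tau+\langle\nabla u,\dot\gamma\rangle_{g(\tau)}\bigr)\,d\tau\ge \int_{t_1}^{t_2}\Bigl(u_\tau-\alpha|\nabla u|^{2}-\tfrac{|\dot\gamma|_{g(\tau)}^{2}}{4\alpha}\Bigr)d\tau
\]
by Cauchy--Schwarz weighted with $\alpha$. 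Combining with the Li-Yau estimate and with $|\dot\gamma|_{g(\tau)}^{2}\le C_{2}d^{2}/(t_2-t_1)^{2}$ from $\mathrm{(a)}$, carrying out the $\tau$-integration produces the logarithmic term $-\tfrac{n}{2}\log(t_2/t_1)$, the distance term $-\tfrac{C_{2}^{2}d^{2}}{4(t_2-t_1)}$ after optimizing in $\alpha$, and the linear remainder $-\bigl[\tfrac{n\alpha C_{4}}{2(\alpha-1)}+C_{2}C_{3}(n+E)\bigr](t_2-t_1)$. Exponentiating and passing to $\sup_{x_1}$ and $\inf_{x_2}$ gives the stated Harnack inequality.

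The main obstacle is the second step: the evolving-metric contributions to $(\partial_t-\Delta)F$ must be tracked very carefully, and every remainder absorbed either into the good $-(\Delta u)^{2}/n$ square, into the Ricci term, or into the final constants, matching the precise exponent in the statement. In particular the coupling of $C_{2}C_{3}$ with $(n+E)$ records exactly how the $\partial_t g^{ij}$-induced second-derivative remainders are dominated using the a priori Hessian bound $E$; once this bookkeeping is done, the maximum principle and path-integration steps proceed routinely.
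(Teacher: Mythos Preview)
The paper does not prove this proposition at all: it is quoted verbatim as \cite[Theorem 2.1]{cao} and used as a black box, so there is no ``paper's own proof'' to compare against. Your outline is precisely Cao's original argument---a Li--Yau gradient estimate for $u=\log\varphi$ under an evolving metric, followed by integration along a space-time path---so in substance you are reconstructing the cited reference rather than diverging from it.

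One small point worth tightening: in your final integration step you speak of ``optimizing in $\alpha$'' to produce the distance term, but in the stated inequality $\alpha>1$ remains a free parameter throughout (it still appears in the $\frac{n\alpha C_4}{2(\alpha-1)}$ term). The Cauchy--Schwarz split with weight $\alpha$ actually yields a distance contribution of order $C_2 d^{2}/\bigl(4\alpha(t_2-t_1)\bigr)$, and likewise the Li--Yau term integrates to a multiple of $\alpha\log(t_2/t_1)$ rather than $\log(t_2/t_1)$; the precise constants in the displayed inequality (the $C_2^{2}$ and the bare $n/2$) come from Cao's specific bookkeeping and do not fall out of a further optimization. This does not affect the correctness of your strategy, only the exact form of the final constants.
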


By the Gauss equation (\ref{G}), the Ricci curvature are given by
\[%
\begin{array}
[c]{c}%
R_{ij}=Kg_{ij}+H_{m}h_{ij}^{m}-h_{il}^{m}h_{jm}^{l}%
\end{array}
\]
and the evolution equation for the metrics $g_{ij}(x,t)$ is $\frac{\partial
}{\partial t}g_{ij}=-2H^{k}h_{kij},$ then the conditions in Proposition
\ref{Harnack} are satisfied if $L_{t}$ is a family of Legendrian immersions
into an $\eta$-Einstein Sasakian manifold $M^{2n+1}$ evolving by the mean
curvature flow (\ref{27}) such that the second fundamental forms satisfy
$|A|^{2}\leq C_{0}$ uniformly in $t$ and all metrics $g_{ij}(x,t)$ are
uniformly equivalent to $g_{ij}(x,0)$. On the other hand, the Legendrian angle
$\alpha$ satisfies the equation%
\begin{equation}%
\begin{array}
[c]{c}%
\frac{\partial}{\partial t}\alpha=\Delta\alpha+(K+2)\alpha,
\end{array}
\label{angle}%
\end{equation}
the strong parabolic maximum principle implies that $\alpha$ is either
identical zero or $\alpha-\inf\alpha(0)$ is a positive solution. Following the
same methods as in \cite{cao}, we can obtain the exponential decreasing of the
oscillation of the rescaled Legendrian angle function $\beta:=e^{-(K+2)t}%
\alpha,$ which satisfies the heat equation (\ref{he}).

\begin{lemma}
\label{osc} (\cite{s3}) Let $osc\beta(t):=\sup\beta(t)-\inf\beta(t)$ be the
oscillation of $\beta$, where $\beta$ is the solution of the heat equation
(\ref{he}). If $L_{t}$ is a family of compact Legendrian immersions in an
$\eta$-Einstein Sasakian manifold $(M^{2n+1},J,T)$ evolving by the mean
curvature flow (\ref{27}) such that the norm of the second fundamental forms
on $L_{t}$ are uniformly bounded and all induced metrics $g_{ij}(t)$ are
uniformly equivalent to $g_{ij}(0)$, then there exist positive constants
$C_{5}$ and $C_{6}$ independent of $t$ such that%
\begin{equation}%
\begin{array}
[c]{c}%
osc\beta(t)\leq C_{5}e^{-C_{6}t},\text{\ for all\ }t\in\lbrack0,\infty).
\end{array}
\label{101}%
\end{equation}

\end{lemma}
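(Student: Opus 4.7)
The plan is to apply Proposition \ref{Harnack} (Cao's Harnack inequality) to two auxiliary positive solutions of the heat equation, following a Moser-type oscillation decay argument. First I would verify that $\beta$ solves the heat equation (\ref{he}): a direct computation using (\ref{angle}) gives
\[
\frac{\partial}{\partial t}\beta=-(K+2)\beta+e^{-(K+2)t}(\Delta\alpha+(K+2)\alpha)=\Delta_{t}\beta.
\]
Next, the structural hypotheses (a), (b), (c) of Proposition \ref{Harnack} must be checked with constants independent of $t$: (a) is the uniform equivalence assumption on $g_{ij}(t)$; (b) follows from $\frac{\partial}{\partial t}g_{ij}=-2H^{k}h_{kij}$ together with the bound $|A|\leq C_{0}$; and (c) follows from the Gauss equation $R_{ij}=Kg_{ij}+H_{m}h_{ij}^{m}-h_{il}^{m}h_{jm}^{l}$, the uniform bound on $|A|$, and (a).

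For the core argument, fix $t_{0}\geq 0$ and set $M_{t_{0}}:=\sup_{L}\beta(\cdot,t_{0})$, $m_{t_{0}}:=\inf_{L}\beta(\cdot,t_{0})$. By the parabolic maximum principle applied to (\ref{he}), $\sup_{L}\beta(\cdot,s)$ is non-increasing and $\inf_{L}\beta(\cdot,s)$ is non-decreasing in $s$. Consequently, for each small $\epsilon>0$, both
\[
\varphi_{+}(x,s):=M_{t_{0}}-\beta(x,s)+\epsilon,\qquad \varphi_{-}(x,s):=\beta(x,s)-m_{t_{0}}+\epsilon
\]
are strictly positive solutions of (\ref{he}) on $L\times[t_{0},t_{0}+1]$. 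Applying Proposition \ref{Harnack} with $t_{1}=t_{0}$, $t_{2}=t_{0}+1$ to each gives, after rearranging,
\[
\operatorname{osc}\beta(t_{0})+\epsilon\leq C\bigl[M_{t_{0}}-M_{t_{0}+1}+\epsilon\bigr],\qquad \operatorname{osc}\beta(t_{0})+\epsilon\leq C\bigl[m_{t_{0}+1}-m_{t_{0}}+\epsilon\bigr],
\]
for a constant $C$ that depends only on the data in (a)--(c). Summing the two estimates and letting $\epsilon\downarrow 0$ yields $2\operatorname{osc}\beta(t_{0})\leq C[\operatorname{osc}\beta(t_{0})-\operatorname{osc}\beta(t_{0}+1)]$, i.e.
\[
\operatorname{osc}\beta(t_{0}+1)\leq\gamma\cdot \operatorname{osc}\beta(t_{0}),\qquad \gamma:=1-\tfrac{2}{C}\in(0,1).
\]
Iterating over integer time steps and interpolating over unit intervals (using condition (a), which controls $\beta(\cdot,t)$ uniformly between integer times) produces (\ref{101}) with $C_{6}=-\log\gamma$ and an appropriate $C_{5}$ depending on $\operatorname{osc}\beta(0)$.

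The subtle step will be the passage $\epsilon\downarrow 0$, since the Harnack constant in Proposition \ref{Harnack} involves $E=\sup|\nabla^{2}\log\varphi_{\pm}|$, which a priori blows up where $\varphi_{\pm}$ vanishes. The standard way to control this is to apply Harnack with $\epsilon$ held fixed and then observe that the additive $\epsilon$ terms cancel from both sides of the final oscillation inequality, or equivalently to use the higher-order regularity estimate (\ref{32}) from Theorem \ref{12}(1), which bounds $|\nabla^{m}A|$ and in turn $|\nabla^{m}\alpha|$ uniformly in $t$, so that $E$ admits a bound depending only on $\epsilon$ and the geometric data. This uniform control of $E$ is the technical heart of the argument and is exactly what permits $C$ to be taken independent of $t_{0}$; everything else is an application of the Harnack inequality and the maximum principle.
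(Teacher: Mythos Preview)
The paper does not actually prove this lemma: it is stated with a citation to \cite{s3}, and the sentence preceding it says only that ``following the same methods as in \cite{cao}'' one obtains the exponential decay of the oscillation. Your outline \emph{is} that method (apply the Harnack inequality to the shifted positive solutions $M_{t_0}-\beta+\epsilon$ and $\beta-m_{t_0}+\epsilon$, sum, and iterate), so there is no divergence of approach to discuss; you have written out what the paper omits.

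There is, however, a genuine gap in your handling of the $E$ term, and neither of your two proposed fixes closes it. In the summed inequality
\[
2\,\mathrm{osc}\,\beta(t_0)+2\epsilon\ \le\ C\bigl[\mathrm{osc}\,\beta(t_0)-\mathrm{osc}\,\beta(t_0+1)+2\epsilon\bigr],
\]
the Harnack constant $C$ itself depends on $E=\sup|\nabla^{2}\log\varphi_{\pm}|$, and since $\varphi_{\pm}\ge\epsilon$ while $|\nabla\beta|,|\nabla^{2}\beta|$ are bounded independently of $\epsilon$, one has $E\sim\epsilon^{-2}$ and hence $C(\epsilon)\to\infty$ as $\epsilon\downarrow0$. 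The ``additive $\epsilon$ terms cancel'' only if $C$ is fixed, which it is not; and bounding $|\nabla^{m}\alpha|$ via Theorem~\ref{12}(1) still leaves $E$ depending on $\epsilon$. The reason the argument succeeds in \cite{cao} and \cite{s3} is that the Li--Yau/Cao differential Harnack for $(\partial_{t}-\Delta_{t})\varphi=0$ actually yields a constant depending only on the metric data in (a)--(c) (augmented, if needed, by a bound on $|\nabla\dot g|$, which here follows from $\dot g_{ij}=-2H^{k}h_{kij}$ and the higher-order estimates on $A$), and \emph{not} on the solution $\varphi$: the cross term $(\partial_{t}g^{ij})\nabla_{i}\nabla_{j}\log\varphi$ is absorbed by Young's inequality into the good Bochner term $|\nabla^{2}\log\varphi|^{2}$. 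With such a solution-independent constant the oscillation iteration closes immediately without any $\epsilon\to0$ limit. You should therefore appeal to the Harnack estimate in \cite{cao} or \cite{s3} directly rather than to Proposition~\ref{Harnack} as stated here. A minor side remark: taking $t_{1}=t_{0}$ makes the factor $(t_{2}/t_{1})^{n/2}$ degenerate when you regard $\varphi_{\pm}$ as starting at time $t_{0}$; shift to $t_{1}=t_{0}+1$, $t_{2}=t_{0}+2$ and the iteration gives $\mathrm{osc}\,\beta(t_{0}+2)\le\frac{C-1}{C+1}\,\mathrm{osc}\,\beta(t_{0})$, which is equally good.
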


{\large Proof of Theorem \ref{12}: }

\begin{proof}
As in \cite{Hu}, we can use the evolution equation for the second fundamental
tensor $|A|^{2}$ to derive the following inequalities for all $m\geq0$:%
\begin{equation}%
\begin{array}
[c]{lll}%
\frac{\partial}{\partial t}|\nabla^{m}A|^{2} & \leq & \Delta|\nabla^{m}%
A|^{2}-2|\nabla^{m+1}A|^{2}\\
&  & +C(n,m)\left\{  \underset{i+j+k=m}{%
%TCIMACRO{\tsum }%
%BeginExpansion
{\textstyle\sum}
%EndExpansion
}|\nabla^{i}A||\nabla^{j}A||\nabla^{k}A||\nabla^{m}A|\right. \\
&  & \left.  \text{ \ \ \ \ \ \ \ \ \ }+\widetilde{C}_{m}\underset{i\leq m}{%
%TCIMACRO{\tsum }%
%BeginExpansion
{\textstyle\sum}
%EndExpansion
}|\nabla^{i}A||\nabla^{m}A|+\widetilde{C}_{m+1}|\nabla^{m}A|\right\}  ,
\end{array}
\label{100}%
\end{equation}
where the constants depend only on $n$ and $M$. Assume that $m\geq0$ and that
we have bounds for $|\nabla^{l}A|^{2}$ for all $0\leq l<m$. Then by using the
maximum principle, we can prove that the uniform bounds for $|\nabla^{m}%
A|^{2}$, for all $m\geq0$. Hence we proved part $(1)$. Part $(2) $ is now a
direct consequence of $(1)$ since we can continue the solution of the flow
(\ref{27}) if $|A|^{2}$ will stay uniformly bounded on a finite interval. In
part $(3),$ in the case that the ambient $\eta$-Einstein Sasakian manifold
with the $\eta$-Einstein constant $K+2>0$ (which is Sasaki-Einstein). It
follows from the result of Chen-He \cite{ch} easily. To prove the part $(3)$
in case of the $\eta$-Einstein constant $K+2\leq0$, we observe that in this
case all the assumptions in Lemma \ref{osc} are satisfied and one then obtain
from (\ref{101}) that
\[%
\begin{array}
[c]{c}%
osc(e^{-(K+2)t}\alpha)\leq ce^{-\delta t}%
\end{array}
\]
for some positive constants $c$, $\delta$ that do not depend on $t$. This
implies that%
\begin{equation}%
\begin{array}
[c]{c}%
osc(\alpha(t))\leq ce^{(K+2-\delta)t}%
\end{array}
\label{102}%
\end{equation}
and hence the Legendrian angle $\alpha(t)$ tends to zero if $K+2\leq0$ and
$t\rightarrow\infty$. Since by the maximum principle $\alpha(t)$ is uniformly
bounded if $K+2\leq0,$ and we see that%
\[%
\begin{array}
[c]{c}%
|\alpha-\inf\alpha(0)|\leq2ce^{(K+2-\delta)t}.
\end{array}
\]
For any compact Riemannian manifold $(L,g)$, there exists a constant $C$
depending only on $L$ and $g$ such that the interpolation inequality
\[%
\begin{array}
[c]{c}%
|\nabla\varphi|^{2}\leq C|\varphi|(|\nabla^{2}\varphi|+|\nabla\varphi|)
\end{array}
\]
holds for any smooth functions $\varphi$. Now, if $g(t)$ is a family of
uniformly equivalent metrics on $L$, then we can choose a positive constant
$C$ independent of $t.$ By applying the above equation to $\varphi=\alpha
-\inf\alpha(0)$, note $H=d\alpha,$ and we deduce that
\[%
\begin{array}
[c]{c}%
|H|^{2}\leq C|\alpha-\inf\alpha(0)|(|\nabla H|+|H|)\leq C^{\prime
}e^{(K+2-\delta)t}%
\end{array}
\]
since $|\nabla H|^{2}\leq c(n)|\nabla A|^{2}\leq C(n)$ by (a). This implies
the mean curvature vector $H$ tends to zero exponentially as $t$ goes
infinite. We then integrate the evolution equation $\frac{d}{dt}F=H-2\alpha T
$ and the exponential decay of $|H|$ and $\alpha$ implies that for any
$\gamma>0$ we can find a time $t_{0}$ such that for all $t\geq t_{0}$ the
immersion $L_{t}$ will stay in an $\gamma$-neighborhood of $L_{t_{0}}$. This
proves the convergence as $t$ goes infinite.
\end{proof}

Before we prove main theorems, we recall the following definition.

\begin{definition}
(\cite{p}) A geodesic ball $B(p,\rho)\subset L$ is called $\kappa
$-noncollapsed if
\[%
\begin{array}
[c]{c}%
\mathrm{Vol}(B(q,s))\geq\kappa s^{n},
\end{array}
\]
whenever $B(q,s)\subset B(p,\rho)$. Here the volume is with respect to the
induced metric on $L$. A Riemannian manifold $L$ is called $\kappa
$-noncollapsed on the scale $r$ if every geodesic ball $B(q,s)$ is $\kappa
$-noncollapsed for $s\leq r$.
\end{definition}

\begin{theorem}
\label{22} Let $(M,\lambda,g,\mathbf{T},J)$ be an $\eta$-Einstein Sasakian
manifold with the $\eta$-Einstein constant $K+2<0$ and the solution $L_{t}$
$(t\in\lbrack0,T_{0}])$ of the Legendrian mean curvature flow with exact mean
curvature form $H_{0}$ for $t=0$. For any $\kappa_{0},$ $r_{0},$ $V_{0},$
$\Lambda_{0}>0$, there exists $\epsilon_{0}=\epsilon_{0}(\kappa_{0}%
,r_{0},V_{0},\Lambda_{0},K_{5},K+2)$ if $L_{0}$ is $\kappa_{0}$-noncollapsed
on the scale $r_{0}$ and satisfies
\[%
\begin{array}
[c]{c}%
\text{\textrm{Vol}}(L_{0})\leq V_{0},\text{ }|A|\leq\Lambda_{0},\text{
}|H|\leq\epsilon_{0},
\end{array}
\]
then the Legendrian mean curvature flow with the initial data $L_{0}$ will
converge exponentially fast to a minimal Legendrian submanifold in $M$.
\end{theorem}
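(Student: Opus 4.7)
The plan is to combine smallness of the mean curvature, exactness of $H$, and the sign condition $K+2<0$ with $\kappa_{0}$-noncollapsing to force exponential decay of the Legendrian angle, and then invoke Theorem \ref{12}(3) for convergence to a minimal Legendrian submanifold. I would first exploit exactness: for the Legendrian mean curvature flow with $H_{0}$ exact, there is a smooth family $\alpha(t)$, unique up to an additive function of $t$, with $H=d\alpha$ satisfying the angle equation (\ref{angle}). Fix the ambiguity by requiring $\int_{L_{t}}\alpha\,d\mu=0$ at each time. Combining (\ref{angle}) with the volume evolution (\ref{V}), a short integration by parts gives
\begin{equation*}
\tfrac{d}{dt}\int_{L_{t}}\alpha^{2}\,d\mu = -2\int_{L_{t}}|\nabla\alpha|^{2}\,d\mu + 2(K+2)\int_{L_{t}}\alpha^{2}\,d\mu - \int_{L_{t}}\alpha^{2}|H|^{2}\,d\mu,
\end{equation*}
so since $K+2<0$ and the remaining two terms are nonpositive, $\int_{L_{t}}\alpha^{2}\,d\mu\leq e^{2(K+2)t}\int_{L_{0}}\alpha^{2}\,d\mu$. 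The hypothesis $|H_{0}|\leq\epsilon_{0}$ together with the Poincaré inequality (using the mean-zero normalization) and $\mathrm{Vol}(L_{0})\leq V_{0}$ makes $\int_{L_{0}}\alpha^{2}\,d\mu$ of order $\epsilon_{0}^{2}$.

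Next I would upgrade this $L^{2}$-decay to a pointwise decay of $H$. By continuity the bounds $|A|\leq 2\Lambda_{0}$ and $|H|\leq 2\epsilon_{0}$ persist on some short initial interval; the evolution equation (\ref{H}) then yields the reaction-diffusion inequality $\partial_{t}|H|^{2}\leq\Delta|H|^{2}+C(\Lambda_{0})|H|^{2}$. The $\kappa_{0}$-noncollapsing on scale $r_{0}$, combined with $|A|\leq 2\Lambda_{0}$, gives a uniform Sobolev inequality on $L_{t}$; a Moser iteration on $|H|^{2}$ thus converts the $L^{2}$-decay above into pointwise exponential decay $\sup_{L_{t}}|H|\leq Ce^{-\delta t}$. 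Theorem \ref{12}(1) then delivers uniform bounds on every $|\nabla^{m}A|$, while $\partial_{t}g_{ij}=-2H^{k}h_{kij}$ being exponentially small keeps the metrics $g_{ij}(t)$ uniformly equivalent. Theorem \ref{12}(3) then yields smooth exponential convergence of $L_{t}$ to a minimal Legendrian immersion.

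The main obstacle is maintaining the bound $|A|\leq 2\Lambda_{0}$ for all $t$; only its initial value is assumed. I would close this by a continuity argument: set $T^{\ast}=\sup\{t\in[0,T):|A|\leq 2\Lambda_{0}\text{ on }[0,t]\}$ and show $T^{\ast}=T$. On $[0,T^{\ast})$ all the above estimates are available, and an $\epsilon$-regularity / pseudo-locality argument in the spirit of Ecker-Huisken and Chen-He \cite{ch}, using the noncollapsing hypothesis to preclude curvature concentration and the smallness of $\epsilon_{0}$ (chosen in terms of $\kappa_{0},r_{0},V_{0},\Lambda_{0},K_{5},K+2$), should upgrade this to $|A|\leq \tfrac{3}{2}\Lambda_{0}$ on $[0,T^{\ast})$; then by continuity $T^{\ast}=T$, and Theorem \ref{12}(2) forces $T=\infty$. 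This pseudo-locality step is the delicate part, since it requires interior second-fundamental-form estimates that depend only on the $L^{\infty}$-smallness of $|H|$ and the Sobolev constant of $L_{t}$, and must survive the presence of the positive linear term $+4|H|^{2}$ in equation (\ref{H}) which, unlike the angle equation, enjoys no benefit from $K+2<0$.
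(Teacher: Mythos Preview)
Your overall architecture (an $L^{2}$-decay for the angle, upgrade to pointwise decay of $H$, then feed into Theorem~\ref{12}) is reasonable and not far in spirit from the paper, but the step you yourself flag as ``the delicate part'' is a genuine gap. The pseudo\-locality/$\epsilon$-regularity argument you invoke to propagate $|A|\le 2\Lambda_{0}$ is not made precise, and in this setting there is no off-the-shelf result that bounds $|A|$ pointwise from smallness of $|H|$ alone: the parabolic evolution (\ref{A}) for $|A|^{2}$ contains the cubic reaction term $2h_{is}^{m}h_{ml}^{s}h_{jk}^{l}h^{ijk}$ which is completely decoupled from $H$, so an argument based on (\ref{A}) or on local energy of $A$ cannot close without an additional idea.

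The paper avoids this by using a different evolution inequality for $|A|$, namely (\ref{65}), which comes directly from (\ref{3}) rather than from the Simons identity: since the flow velocity is $H$, one has $\partial_{t}|A|\le |\nabla^{2}H|+c(n)|A|^{2}|H|+(K_{0}+3)|H|$. Thus the growth of $|A|$ is controlled by $|H|$ and $|\nabla^{2}H|$, not by $|A|^{3}$. Inside the continuity region one first gets $\int|H|^{2}\le V_{0}\epsilon_{0}^{2}e^{(K+2)t}$ from Lemma~\ref{11}, then converts this to a pointwise bound on $|H|$ via the elementary Lemma~\ref{13} (which only needs $\kappa$-noncollapsing and a gradient bound, no Moser iteration). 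Higher-order bounds $|\nabla^{k}A|\le C_{k}$ from Theorem~\ref{12}(1) give $\int|\nabla^{2}H|^{2}\le\int|H|\,|\nabla^{4}H|$, and Lemma~\ref{13} again yields exponential pointwise decay of $|\nabla^{2}H|$. Integrating (\ref{65}) in $t$ then gives $|A|\le 3\Lambda_{0}$ with no appeal to pseudo\-locality. This is packaged as Lemma~\ref{16}, and the continuity argument is run on the class $\mathcal{A}(\kappa,r,\Lambda,\epsilon)$, which simultaneously tracks noncollapsing, $|A|$, and $|H|$. A smaller point: the paper works with $\int|H|^{2}$ (Lemma~\ref{11}) rather than $\int\alpha^{2}$, which sidesteps the need for a Poincar\'e constant on $L_{0}$ to estimate the initial value.
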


In order to proof the Theorem \ref{22}, we need the following Lemmas.

\begin{lemma}
\label{11} Let $(M,\lambda,g,\mathbf{T},J)$ be an $\eta$-Einstein Sasakian
manifold with the $\eta$-Einstein constant $K+2$. For any positive constants
$\Lambda,$ $\epsilon>0$, if the solution $L_{t}$ $(t\in\lbrack0,T_{0}])$ of
the Legendrian mean curvature flow with the exact mean curvature form $H_{0}$
for $t=0$ satisfies
\begin{equation}%
\begin{array}
[c]{c}%
|A|(t)\leq\Lambda,\text{\ }|H|(t)\leq\epsilon,\text{\ }t\in\lbrack0,T_{0}]
\end{array}
\label{41}%
\end{equation}
then we have the inequality
\begin{equation}%
\begin{array}
[c]{c}%
\frac{d}{dt}\int_{L_{t}}|H|^{2}d\mu_{t}\leq2\left(  K+2+\Lambda\epsilon
\right)  \int_{L_{t}}|H|^{2}d\mu_{t},\text{\ }t\in\lbrack0,T_{0}].
\end{array}
\label{42}%
\end{equation}
Moreover, we have
\begin{equation}%
\begin{array}
[c]{c}%
\frac{d}{dt}\int_{L_{t}}|H|^{2}d\mu_{t}\leq-2\left(  \lambda_{1}%
-(K+2)-\Lambda\epsilon\right)  \int_{L_{t}}|H|^{2}d\mu_{t},\text{\ }%
t\in\lbrack0,T_{0}].
\end{array}
\label{43}%
\end{equation}
where $\lambda_{1}$ is the first eigenvalue of Laplacian $\Delta$ with respect
to the induced metric on $L_{t}$.
\end{lemma}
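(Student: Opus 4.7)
My plan is to differentiate $I(t):=\int_{L_t}|H|^2 d\mu_t$ directly, using the representation $H=d\alpha$ together with the scalar evolution $\partial_t\alpha = \Delta\alpha+(K+2)\alpha$ coming from \eqref{24}--\eqref{25}, rather than invoking the heat-type equation \eqref{H} for $|H|^2$. Working at the level of $\alpha$ is what produces the clean $2(K+2)|H|^2$ coefficient and, more importantly, the sign-definite contribution $-2\int_{L_t}(\Delta\alpha)^2 d\mu_t$ that is needed to feed in the first eigenvalue of $\Delta_d$ for the sharper estimate \eqref{43}.

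Combining $\partial_t g^{ij}=2H^k h^{ij}_k$, $\partial_t d\mu_t = -|H|^2 d\mu_t$, and $\partial_t H_j=\nabla_j\partial_t\alpha = \nabla_j\Delta\alpha + (K+2)H_j$ (valid because $\nabla_j\alpha=\partial_j\alpha$ is metric-independent), I get
\begin{equation*}
\tfrac{d}{dt}I = \int_{L_t}\bigl[2H^k h^{ij}_k H_i H_j + 2H^j\nabla_j\Delta\alpha + 2(K+2)|H|^2 - |H|^4\bigr] d\mu_t.
\end{equation*}
Integration by parts on the closed $L_t$, together with $\nabla_j H^j = \nabla_j\nabla^j\alpha = \Delta\alpha$, converts $2\int_{L_t}H^j\nabla_j\Delta\alpha\, d\mu_t$ into $-2\int_{L_t}(\Delta\alpha)^2 d\mu_t$, leading to the master identity
\begin{equation*}
\tfrac{d}{dt}I = 2(K+2)I + 2\int_{L_t} H^k h^{ij}_k H_i H_j\, d\mu_t - 2\int_{L_t}(\Delta\alpha)^2 d\mu_t - \int_{L_t}|H|^4 d\mu_t.
\end{equation*}

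For \eqref{42} I simply discard the two nonpositive terms and estimate the cubic contribution via Cauchy--Schwarz: $|H^k h^{ij}_k H_i H_j|\le |A||H|^3 \le \Lambda\epsilon|H|^2$ pointwise by \eqref{41}, which yields $\tfrac{d}{dt}I \le 2(K+2+\Lambda\epsilon)I$. For the sharper bound \eqref{43} I must instead retain $-2\int_{L_t}(\Delta\alpha)^2 d\mu_t$: after integration by parts $I = -\int_{L_t}(\alpha-\bar\alpha)\Delta\alpha\, d\mu_t$, Cauchy--Schwarz combined with the Poincar\'e inequality $\int_{L_t}(\alpha-\bar\alpha)^2 d\mu_t \le \lambda_1^{-1} I$ delivers $\int_{L_t}(\Delta\alpha)^2 d\mu_t \ge \lambda_1 I$, upgrading the estimate by an additional $-2\lambda_1 I$. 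The main conceptual obstacle is recognizing that the natural $\eta$-Einstein factor $(K+2)$ is visible only after performing this integration by parts on $H^j\nabla_j\Delta\alpha$; the angle $\alpha$ being defined only up to a time-dependent constant from \eqref{24} causes no difficulty, since $I$, $(\Delta\alpha)^2$, and the Poincar\'e step applied to $\alpha-\bar\alpha$ are all insensitive to that ambiguity.
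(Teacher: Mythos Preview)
Your argument is correct and is essentially the same as the paper's own proof: both compute $\frac{d}{dt}\int_{L_t}|H|^2 d\mu_t$ using $\partial_t g^{ij}=2H_kh^{kij}$, $\partial_t d\mu_t=-|H|^2 d\mu_t$, and $\partial_t H_i=\nabla_i\Delta\alpha+(K+2)H_i$ to arrive at the identity (your ``master identity'', the paper's \eqref{44}), then drop the nonpositive terms and bound the cubic term by $|A||H|\cdot|H|^2\le\Lambda\epsilon|H|^2$ for \eqref{42}, and invoke $\int(\Delta\alpha)^2\ge\lambda_1\int|\nabla\alpha|^2$ for \eqref{43}. Your extra remarks on the Poincar\'e step via $\alpha-\bar\alpha$ and on the harmless additive-constant ambiguity in $\alpha$ are valid refinements but do not change the route.
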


\begin{proof}
By using equation (\ref{i}) one get $\frac{\partial}{\partial t}g^{_{ij}%
}=2H_{k}h^{kij}$ and from (\ref{21}) we have $\frac{\partial}{\partial t}%
H_{i}=\nabla_{i}\Delta\alpha+(K+2)H_{i}$ for a smooth Legendrian angle
function $\alpha$,
\begin{equation}%
\begin{array}
[c]{ll}
& \frac{d}{dt}\int_{L_{t}}|H|^{2}d\mu_{t}=\frac{d}{dt}\int_{L_{t}}g^{ij}%
H_{i}H_{j}d\mu_{t}\\
= & \int_{L_{t}}[(\frac{\partial}{\partial t}g^{ij})H_{i}H_{j}+2H^{i}%
\frac{\partial}{\partial t}H_{i}-|H|^{4}]d\mu_{t}\\
= & \int_{L_{t}}[2H^{i}\nabla_{i}\Delta\alpha+2(K+2)|H|^{2}+2H_{i}H_{j}%
H_{k}h^{kij}-|H|^{4}]d\mu_{t}\\
= & \int_{L_{t}}[-2(\Delta\alpha)^{2}+2(K+2)|H|^{2}+2H_{i}H_{j}H_{k}%
h^{kij}-|H|^{4}]d\mu_{t},
\end{array}
\label{44}%
\end{equation}
which implies
\[%
\begin{array}
[c]{c}%
\frac{d}{dt}\int_{L_{t}}|H|^{2}d\mu_{t}\leq2\left(  K+2+\Lambda\epsilon
\right)  \int_{L_{t}}|H|^{2}d\mu_{t}.
\end{array}
\]

From (\ref{25}) one have that $H=\nabla\alpha$ and yields%
\[%
\begin{array}
[c]{c}%
\int_{L_{t}}(\Delta\alpha)^{2}d\mu_{t}\geq\lambda_{1}\int_{L_{t}}|\nabla
\alpha|^{2}d\mu_{t}=\lambda_{1}\int_{L_{t}}|H|^{2}d\mu_{t},
\end{array}
\]
combining this with (\ref{44}), we have the inequality (\ref{43}).
\end{proof}

\begin{lemma}
\label{8} Let $(M,\lambda,g,\mathbf{T},J)$ be an $\eta$-Einstein Sasakian
manifold with the $\eta$-Einstein constant $K+2$ and the solution $L_{t}$
$(t\in\lbrack0,T_{0}])$ of the Legendrian mean curvature flow with exact mean
curvature form $H_{0}$ for $t=0$. If $L_{0}$ is $\kappa_{0}$-noncollapsed on
the scale $r_{0}$, then for any small geodesic ball $B_{t}(p,\rho)$ in $L_{t}$
with radius $\rho\in(o,r_{0})$, we have
\[%
\begin{array}
[c]{c}%
\mathrm{Vol}(B_{t}(p,\rho))\geq\kappa_{0}e^{-(n+1)E(t)}\rho^{n},
\end{array}
\]
where $E(t)=\int_{0}^{t}\max_{L_{s}}(|A||H|+|H|^{2})ds.$
\end{lemma}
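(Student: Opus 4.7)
The plan is to compare the intrinsic geometry of $L_t$ with that of $L_0$ by using the evolution equations of the induced metric and volume form, and then to invoke the $\kappa_0$-noncollapsing hypothesis at time $0$. The key point is that both the metric $g(t)$ and the volume form $d\mu_t$ evolve in a controllable way under the Legendrian mean curvature flow, with the relevant controls being given precisely by the integrands $|A||H|$ and $|H|^2$ that appear in the definition of $E(t)$.

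First, I would use the metric evolution (\ref{i}), namely $\frac{\partial}{\partial t}g_{ij} = -2 H^k h_{kij}$. For any tangent vector $X$ at a fixed point, Cauchy--Schwarz together with the fact that $|h|=|A|$ (since $A_{ij}^{\alpha}$ takes values in $JTL$ and $J$ acts as an isometry on $\xi$) gives the pointwise bound
$$\left|\tfrac{\partial}{\partial t}\bigl(g_{ij}(t) X^i X^j\bigr)\right| \leq 2|A||H|\cdot g_{ij}(t) X^i X^j.$$
Dividing through and integrating in $t$ yields the two-sided comparison $e^{-2E_1(t)} g(0) \leq g(t) \leq e^{2E_1(t)} g(0)$, where $E_1(t) := \int_0^t \max_{L_s}(|A||H|)\,ds$. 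Consequently $d_t \leq e^{E_1(t)} d_0$, which gives the crucial ball inclusion $B_0(p, e^{-E_1(t)}\rho) \subset B_t(p, \rho)$.

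Next, integrating the volume form evolution (\ref{V}), $\frac{\partial}{\partial t} d\mu_t = -|H|^2 d\mu_t$, yields the pointwise bound $d\mu_t \geq e^{-E_2(t)} d\mu_0$ with $E_2(t) := \int_0^t \max_{L_s}|H|^2\,ds$. Combining these with the ball inclusion and applying the $\kappa_0$-noncollapsing hypothesis at $t=0$ to $B_0(p, e^{-E_1(t)}\rho)$ (whose radius is at most $\rho < r_0$) gives
$$\mathrm{Vol}\bigl(B_t(p,\rho)\bigr) \geq e^{-E_2(t)}\,\mathrm{Vol}\bigl(B_0(p, e^{-E_1(t)}\rho)\bigr) \geq \kappa_0\, e^{-nE_1(t) - E_2(t)} \rho^n.$$
Since $|A||H|\leq|A||H|+|H|^2$ and $|H|^2 \leq |A||H|+|H|^2$, we have $E_1(t), E_2(t) \leq E(t)$, hence $nE_1(t) + E_2(t) \leq (n+1)E(t)$, yielding the desired bound.

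The step I expect to require the most care is the pointwise estimate $|H^k h_{kij} X^i X^j| \leq |A||H|\cdot g_{ij} X^i X^j$: one must view $H^k h_{kij}$ as the contraction of the vector $H$ with the vector-valued symmetric form $h$ and apply Cauchy--Schwarz in the appropriate index, together with $|h|=|A|$. Once this is secured, the distance comparison from ODE integration of $\partial_t g$ and the volume form comparison from ODE integration of $\partial_t d\mu$ are standard, and the final arithmetic relating $nE_1+E_2$ to $(n+1)E$ is immediate from the definition of $E(t)$.
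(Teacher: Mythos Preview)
Your proposal is correct and follows essentially the same approach as the paper: control the metric via $\partial_t g_{ij}=-2H^k h_{kij}$ to obtain a distance comparison and hence a ball inclusion, control the volume form via $\partial_t d\mu=-|H|^2 d\mu$, and then invoke the $\kappa_0$-noncollapsing of $L_0$. Your use of the upper bound $d_t\leq e^{E_1(t)}d_0$ to deduce $B_0(p,e^{-E_1(t)}\rho)\subset B_t(p,\rho)$ is in fact the correct direction for the inclusion; the paper records the reverse inequality $d_t\geq e^{-E(t)}d_0$, which also follows from the two-sided metric estimate but is not the one actually needed for the ball containment.
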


\begin{proof}
Consider a fixed smooth path $\gamma:[a,b]\rightarrow L_{t}$ with
$\gamma(a)=p$ and $\gamma(b)=q$ whose length at time $t$ is given by
$d_{t}(p,q)=\int_{a}^{b}\left\vert \gamma^{\prime}(t)\right\vert _{g(t)}dr,$
where $r$ is the arc length. Then, by (\ref{i}) $\frac{\partial}{\partial
t}g^{_{ij}}=2H_{k}h^{kij},$ we compute%
\[%
\begin{array}
[c]{c}%
\frac{\partial}{\partial t}d_{t}(p,q)=\frac{1}{2}\int_{\gamma}(\frac{\partial
}{\partial t}g^{ij})(\xi_{i},\gamma_{j}^{\prime})dr=\int_{\gamma}H_{k}%
h^{kij}\xi_{i}\gamma_{j}^{\prime}dr\geq-(\max_{L_{t}}\left\vert A\right\vert
|H|)d_{t}(p,q)
\end{array}
\]
here $\xi=\gamma^{\prime}/\left\vert \gamma^{\prime}\right\vert $ is the unit
tangent vector to the path $\gamma.$ So we obtain that the distance function
$d_{t}$ will satisfy
\[%
\begin{array}
[c]{c}%
d_{t}(p,q)\geq\exp(-\int_{0}^{t}\max_{L_{s}}\left\vert A\right\vert
|H|ds)d_{0}(p,q)\geq e^{-E(t)}d_{0}(p,q).
\end{array}
\]
Also from (\ref{V}) the volume form $d\mu_{t}$ satisfies $\frac{\partial
}{\partial t}d\mu_{t}=-\left\vert H\right\vert ^{2}d\mu_{t}$, which implies
\[%
\begin{array}
[c]{c}%
d\mu_{t}\geq\exp(-\int_{0}^{t}\max_{L_{s}}|H|^{2}ds)d\mu_{0}\geq e^{-E(t)}%
d\mu_{0}.
\end{array}
\]
Thus, the volume of the geodesic ball $B_{t}(p,\rho)$ has the following lower
bound%
\[%
\begin{array}
[c]{c}%
\mathrm{Vol}(B_{t}(p,\rho))=\int_{B_{t}(p,\rho)}d\mu_{t}\geq\int
_{B_{0}(p,e^{-E(t)}\rho)}e^{-E(t)}d\mu_{0}\geq\kappa_{0}e^{-(n+1)E(t)}\rho
^{n},
\end{array}
\]
for $\rho\leq r_{0}$, since $L_{0}$ is $\kappa_{0}$-noncollapsed on the scale
$r_{0}$.
\end{proof}

\begin{lemma}
\label{13} (\cite{li}) Suppose that $L$ is $\kappa$-noncollapsed on the scalar
$r$. For any tensor $S$ on $L$, if
\[%
\begin{array}
[c]{c}%
|\nabla S|\leq\Lambda,\text{\ }\int_{L}|S|^{2}d\mu\leq\epsilon,
\end{array}
\]
where $\epsilon\leq r^{n+2}$, then
\[%
\begin{array}
[c]{c}%
\max_{L}|S|\leq(\frac{1}{\sqrt{\kappa}}+\Lambda)\epsilon^{\frac{1}{n+2}}.
\end{array}
\]

\end{lemma}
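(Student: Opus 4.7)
The plan is to bound the maximum of $|S|$ by combining the gradient control with the $L^{2}$ mass inside a geodesic ball whose volume is controlled by the noncollapsing hypothesis. Let $p \in L$ be a point where $|S|$ achieves its maximum $M$. Since $|\nabla S| \leq \Lambda$, integrating along minimizing geodesics from $p$ gives the pointwise lower bound $|S|(q) \geq M - \Lambda d(p,q)$ for all $q \in L$. Consequently, on any geodesic ball $B(p,\rho)$ with $\rho \leq r$ we have $|S| \geq M - \Lambda \rho$ (the statement is only nontrivial when $M > \Lambda \rho$, otherwise the conclusion is automatic).

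Next I would invoke the $\kappa$-noncollapsing on the scale $r$: for $\rho \leq r$ the ball $B(p,\rho)$ has $\mathrm{Vol}(B(p,\rho)) \geq \kappa \rho^{n}$. Combining this with the previous step and the $L^{2}$ bound gives
\[
\epsilon \geq \int_{L} |S|^{2}\, d\mu \geq \int_{B(p,\rho)} |S|^{2}\, d\mu \geq (M-\Lambda\rho)^{2}\, \kappa \rho^{n},
\]
which rearranges (in the nontrivial case) to $M \leq \Lambda \rho + \sqrt{\epsilon/(\kappa \rho^{n})}$. This is a one-parameter family of upper bounds, valid for any admissible $\rho$.

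The final step is to select $\rho$ so as to balance the two terms. The natural choice is $\rho = \epsilon^{1/(n+2)}$, which by the hypothesis $\epsilon \leq r^{n+2}$ satisfies $\rho \leq r$, so the noncollapsing applies. Substituting, the second term becomes $\epsilon^{1/2}/(\sqrt{\kappa}\,\rho^{n/2}) = \epsilon^{1/(n+2)}/\sqrt{\kappa}$, and the first term becomes $\Lambda \epsilon^{1/(n+2)}$, yielding
\[
M \leq \Bigl(\tfrac{1}{\sqrt{\kappa}} + \Lambda\Bigr)\, \epsilon^{\frac{1}{n+2}},
\]
which is the claimed inequality. There is no real obstacle here: the argument is essentially a Poincar\'e-type interpolation. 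The only mild subtlety is ensuring the chosen radius lies within the noncollapsing scale, which is precisely the role of the assumption $\epsilon \leq r^{n+2}$, and verifying that when $M \leq \Lambda \rho$ the conclusion is immediate since then $M \leq \Lambda \epsilon^{1/(n+2)}$.
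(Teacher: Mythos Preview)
Your argument is correct and is precisely the standard interpolation argument: control $|S|$ from below on a small ball via the gradient bound, use noncollapsing to bound the ball's volume from below, compare with the global $L^{2}$ mass, and then optimize the radius. The paper itself does not give a proof of this lemma; it simply quotes the statement from \cite{li}, where the same argument appears. Your choice $\rho=\epsilon^{1/(n+2)}$ and the observation that the hypothesis $\epsilon\le r^{n+2}$ is exactly what makes this radius admissible are the key points, and you handle the trivial case $M\le\Lambda\rho$ correctly.
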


\begin{lemma}
\label{9} Let $(M,\lambda,g,\mathbf{T},J)$ be an $\eta$-Einstein Sasakian
manifold with the $\eta$-Einstein constant $K+2$ and the solution $L_{t}$
$(t\in\lbrack0,T_{0}])$ of the Legendrian mean curvature flow with exact mean
curvature form $H_{0}$ for $t=0$. Suppose that
\[
|A|(0)\leq\Lambda,\text{\ }|H|(0)\leq\epsilon,
\]
then we have
\[
|A|(t)\leq2\Lambda,\text{\ }|H|(t)\leq2\epsilon,\text{\ }t\in\lbrack0,T_{1}],
\]
where $T_{1}=T_{1}(n,\Lambda,K_{0},K_{1})$.
\end{lemma}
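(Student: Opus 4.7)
The plan is to derive ODE inequalities for $\max_{L_t}|H|^2$ and $\max_{L_t}|A|^2$ via the parabolic maximum principle applied to the evolution equations (\ref{H}) and (\ref{A}), and then run a standard continuity (bootstrap) argument to propagate the initial bounds for a short positive time.

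First, I would apply the scalar maximum principle to (\ref{H}). Using the pointwise Cauchy--Schwarz estimate $|H^{i}H^{j}h^{l}_{im}h^{m}_{jl}|\leq|A|^{2}|H|^{2}$ and discarding the nonpositive term $-2|\nabla H|^{2}$, one obtains
\begin{equation*}
\frac{d}{dt}\max_{L_{t}}|H|^{2}\;\leq\;\bigl(4+2\max_{L_{t}}|A|^{2}\bigr)\,\max_{L_{t}}|H|^{2}.
\end{equation*}
For (\ref{A}), the cubic term is controlled by $|h^{m}_{is}h^{s}_{ml}h^{l}_{jk}h^{ijk}|\leq C(n)|A|^{4}$, the Riemann contractions $\overline{R}_{\alpha\beta\gamma\delta}(\cdots)h(\cdots)h^{ijk}$ are bounded by $C(n)K_{0}|A|^{2}$ with $K_{0}:=\sup_{M}|\overline{Rm}|$, and the $\overline{\nabla}\,\overline{R}$ terms by $C(n)K_{1}|A|$ with $K_{1}:=\sup_{M}|\overline{\nabla}\,\overline{Rm}|$. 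Combining these gives
\begin{equation*}
\frac{d}{dt}\max_{L_{t}}|A|^{2}\;\leq\;C_{1}(n)\max_{L_{t}}|A|^{4}+C_{2}(n,K_{0})\max_{L_{t}}|A|^{2}+C_{3}(n,K_{1})\max_{L_{t}}|A|+C_{4}(n,K_{0})\max_{L_{t}}|H|^{2}.
\end{equation*}

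Next, define $T_{1}^{\ast}:=\sup\{\,t\in[0,T_{0}]:|A|(s)\leq2\Lambda\text{ and }|H|(s)\leq2\epsilon\text{ for all }s\in[0,t]\,\}$. Continuity of $|A|$, $|H|$ together with the strict initial bounds $|A|(0)\leq\Lambda$ and $|H|(0)\leq\epsilon$ gives $T_{1}^{\ast}>0$. On $[0,T_{1}^{\ast})$ the right-hand sides above become bounded by constants depending only on $n,\Lambda,K_{0},K_{1}$, so Gronwall's inequality applied to the $|H|^{2}$-inequality yields $\max|H|^{2}(t)\leq\epsilon^{2}e^{(4+8\Lambda^{2})t}$, while time integration of the $|A|^{2}$-inequality yields $\max|A|^{2}(t)\leq\Lambda^{2}+C(n,\Lambda,K_{0},K_{1})\,t$. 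Choosing $T_{1}=T_{1}(n,\Lambda,K_{0},K_{1})>0$ small enough makes both right-hand sides strictly less than $(2\epsilon)^{2}$ and $(2\Lambda)^{2}$ respectively, so by continuity $T_{1}^{\ast}\geq T_{1}$, which is the claim.

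The main obstacle is the bookkeeping in the second step: equation (\ref{A}) has many contracted curvature terms involving $\overline{R}_{\alpha\beta\gamma\delta}$, $\overline{\nabla}\,\overline{R}$, the frame $F_{i}^{\alpha}$, and the normals $v_{j}^{\alpha}$. Each must be dominated componentwise by an appropriate power of $|A|$ times a power of $K_{0}$ or $K_{1}$, and care is needed with the lower-order terms involving $K_{1}|A|$ (which contribute a $\sqrt{\cdot}$-term rather than a term of definite sign). Once these pointwise estimates are assembled the Gronwall--continuity step is routine, and the dependence $T_{1}=T_{1}(n,\Lambda,K_{0},K_{1})$ follows directly from the constants appearing in the inequalities.
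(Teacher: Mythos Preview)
Your proposal is correct and follows essentially the same route as the paper: derive parabolic differential inequalities for $|A|$ and $|H|$ from (\ref{A}) and (\ref{H}), apply the scalar maximum principle, and close a continuity argument on the maximal interval where the doubled bounds hold. The only cosmetic differences are that the paper works with $|A|$ and $|H|$ rather than their squares, and in its continuity set it tracks only $|A|\leq 2\Lambda$ (the $|H|$ bound being derived afterwards); also the $6|H|^{2}$ term in (\ref{A}) is absorbed via $|H|^{2}\leq n|A|^{2}$ so that the $|A|$-inequality has no explicit $|H|$-dependence, which you should likewise do to ensure $T_{1}$ is independent of $\epsilon$.
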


\begin{proof}
By using the evolution equation (\ref{A}) of the second fundamental form and
the assumption $K_{i}=%
%TCIMACRO{\tsum \limits_{k=0}^{i}}%
%BeginExpansion
{\textstyle\sum\limits_{k=0}^{i}}
%EndExpansion
\sup|\bigtriangledown^{k}\overline{Rm}|$, we have%
\begin{equation}%
\begin{array}
[c]{c}%
\frac{\partial}{\partial t}|A|\leq\Delta|A|+c_{1}(n)|A|^{3}+c_{2}%
(n,K_{0})|A|+c_{3}(n,K_{1}).
\end{array}
\label{51}%
\end{equation}
Let $t_{0}=\sup\left\{  s>0:|A|(t)\leq2\Lambda,\text{\ }t\in\lbrack
0,s)\right\}  $. Then for $t\in\lbrack0,t_{0})$ we have%
\[%
\begin{array}
[c]{c}%
\frac{\partial}{\partial t}|A|\leq\Delta|A|+8c_{1}\Lambda^{3}+2c_{2}%
\Lambda+c_{3}.
\end{array}
\]
By using the maximum principle, we have
\[%
\begin{array}
[c]{c}%
|A|(t)\leq\max_{L_{0}}|A|(0)+(8c_{1}\Lambda^{3}+2c_{2}\Lambda+c_{3}%
)t\leq2\Lambda,\text{ }t\in\lbrack0,\frac{\Lambda}{8c_{1}\Lambda^{3}%
+2c_{2}\Lambda+c_{3}}].
\end{array}
\]

In the following, we give the estimate for the mean curvature vector $H$. For
$t\in\lbrack0,t_{0}]$, use the evolution equation (\ref{H}) for $H,$ we have
\[%
\begin{array}
[c]{c}%
\frac{\partial}{\partial t}|H|\leq\Delta|H|+|A|^{2}|H|+2|H|\leq\Delta
|H|+(4\Lambda^{2}+2)|H|,
\end{array}
\]
which implies%
\[%
\begin{array}
[c]{c}%
|H|(t)\leq|H|(0)e^{(4\Lambda^{2}+2)t}\leq2\epsilon,\text{\ }t\in\lbrack
0,\min\{t_{0},\frac{\log2}{4\Lambda^{2}+2}\}].
\end{array}
\]
Thus%
\[%
\begin{array}
[c]{c}%
|A|(t)\leq2\Lambda,\text{\ }|H|(t)\leq2\epsilon,\text{ }t\in\lbrack0,T_{1}]
\end{array}
\]
with $T_{1}=\min\{ \frac{\Lambda}{8c_{1}\Lambda^{3}+2c_{2}\Lambda+c_{3}}%
,\frac{\log2}{4\Lambda^{2}+2}\}.$
\end{proof}

In order to prove our main theorem, we introduce the following definition.

\begin{definition}
Let $(M,\lambda,g,\mathbf{T},J)$ be an $\eta$-Einstein Sasakian manifold with
the $\eta$-Einstein constant $K+2<0$. For any positive constants
$\kappa,r,\Lambda,\epsilon$, we define the following subspace of Legendrian
submanifolds in $M^{2n+1}$ by
\begin{equation}%
\begin{array}
[c]{ccc}%
\mathcal{A}(\kappa,r,\Lambda,\epsilon) & = & \left\{  L:L\text{ is }%
\kappa\text{-noncollapsed on the scale }r\text{ }\right. \\
&  & \left.  \text{ \ \ \ \ \ \ \ \ \ \ \ \ \ \ \ \ \ \ \ \ with }%
|A|(t)\leq\Lambda,\text{ }|H|(t)\leq\epsilon\right\}
\end{array}
\label{LS}%
\end{equation}

\end{definition}

By using Lemma \ref{8} and Lemma \ref{9}, we know that the following Lemma is true.

\begin{lemma}
\label{10} Let $(M,\lambda,g,\mathbf{T},J)$ be an $\eta$-Einstein Sasakian
manifold with the $\eta$-Einstein constant $K+2$ and the solution $L_{t}$
$(t\in\lbrack0,T_{0}])$ of the Legendrian mean curvature flow with exact mean
curvature form $H_{0}$ for $t=0$. If the initial Legendrian submanifold
$L_{0}\in\mathcal{A}(\kappa,r,\Lambda,\epsilon)$, then there exists $\tau
=\tau(n,\Lambda,K_{0},K_{1})>0$, such that
\[%
\begin{array}
[c]{c}%
L_{t}\in\mathcal{A}(\frac{1}{2}\kappa,r,2\Lambda,2\epsilon)\text{\ for }%
t\in\lbrack0,\tau].
\end{array}
\]

\end{lemma}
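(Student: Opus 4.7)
The plan is to combine Lemma~\ref{9} with Lemma~\ref{8} and choose $\tau$ small enough so that all three defining conditions of $\mathcal{A}(\tfrac{1}{2}\kappa, r, 2\Lambda, 2\epsilon)$ persist on $[0,\tau]$. Since $L_{0} \in \mathcal{A}(\kappa, r, \Lambda, \epsilon)$ gives the pointwise initial bounds $|A|(0)\leq\Lambda$ and $|H|(0)\leq\epsilon$, Lemma~\ref{9} directly furnishes a time $T_{1}=T_{1}(n,\Lambda,K_{0},K_{1})>0$ on which $|A|(t)\leq 2\Lambda$ and $|H|(t)\leq 2\epsilon$ both hold. This already takes care of the last two slots of the definition (\ref{LS}) of $\mathcal{A}$ on the interval $[0,T_{1}]$, and it fixes the dependence structure of $\tau$ on $n,\Lambda,K_{0},K_{1}$.

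For the noncollapsing condition, I would invoke Lemma~\ref{8}, which states
\[
\mathrm{Vol}(B_{t}(p,\rho)) \geq \kappa\, e^{-(n+1)E(t)} \rho^{n}, \qquad \rho \leq r,
\]
with $E(t)=\int_{0}^{t}\max_{L_{s}}(|A||H|+|H|^{2})\,ds$. On $[0,T_{1}]$ the curvature bounds from the first step, together with the elementary inequality $|H|\leq \sqrt{n}\,|A|$, yield $\max_{L_{s}}(|A||H|+|H|^{2})\leq C(n,\Lambda)$ for an explicit constant, so $E(t)\leq C(n,\Lambda)\,t$. To force the noncollapsing constant to drop by at most a factor of $\tfrac{1}{2}$, I would impose $e^{-(n+1)C(n,\Lambda)\tau}\geq \tfrac{1}{2}$, i.e.\ $\tau \leq \tfrac{\log 2}{(n+1)C(n,\Lambda)}$.

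Setting $\tau := \min\bigl\{T_{1},\ \tfrac{\log 2}{(n+1)C(n,\Lambda)}\bigr\}$ then yields $L_{t}\in \mathcal{A}(\tfrac{1}{2}\kappa,r,2\Lambda,2\epsilon)$ for $t\in[0,\tau]$, with $\tau$ depending only on $n,\Lambda,K_{0},K_{1}$ as advertised. There is no real analytic obstacle here; the argument is essentially a bookkeeping exercise assembling the two preceding lemmas at a commonly valid time scale. The one subtlety worth flagging is keeping $\tau$ free of $\epsilon$-dependence, which is achieved by absorbing $\epsilon$ through the inequality $|H|\leq \sqrt{n}\,|A|$ so that the integrand controlling $E(t)$ depends only on $n$ and $\Lambda$.
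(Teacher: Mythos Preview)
Your proposal is correct and follows exactly the approach the paper indicates: the paper simply states that the lemma follows ``by using Lemma~\ref{8} and Lemma~\ref{9}'' without giving further details, and you have supplied precisely those details. Your observation that one should bound $|H|\leq\sqrt{n}\,|A|$ to keep $\tau$ free of $\epsilon$-dependence is the right way to match the claimed dependence $\tau=\tau(n,\Lambda,K_{0},K_{1})$.
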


The following Lemma is similar to (\cite{li}, Lemma 4.2). For the
completeness, we sketch the proof here.

\begin{lemma}
\label{16}Let $(M,\lambda,g,\mathbf{T},J)$ be an $\eta$-Einstein Sasakian
manifold with the $\eta$-Einstein constant $K+2<0$ and the solution $L_{t}$
$(t\in\lbrack0,T_{0}])$ of the Legendrian mean curvature flow with exact mean
curvature form $H_{0}$ for $t=0$. For any $\kappa_{0},$ $r_{0},$ $\Lambda
_{0},$ $V_{0},$ $T_{0}>0$, assume that there exists a constant $\epsilon
_{0}=\epsilon_{0}(\kappa_{0},r_{0},\Lambda_{0},V_{0})$ such that
\[%
\begin{array}
[c]{l}%
1.\text{ }L_{0}\in\mathcal{A}(\kappa_{0},r_{0},\Lambda_{0},\epsilon
_{0})\text{\ and \textrm{Vol}}(L_{0})\leq V_{0},\\
2.\text{ }L_{t}\in\mathcal{A}(\frac{1}{3}\kappa_{0},r_{0},6\Lambda
_{0},2\epsilon_{0}^{\frac{1}{n+2}}),\text{ }(t\in\lbrack0,T_{0}]).
\end{array}
\]
Then we have\newline(1) The mean curvature vector satisfies
\begin{equation}%
\begin{array}
[c]{c}%
\max_{L_{t}}|H|(t)\leq\epsilon_{0}^{\frac{1}{n+2}}e^{\frac{K+2}{n+2}%
t},\text{\ }t\in\lbrack\tau,T_{0}].
\end{array}
\label{60}%
\end{equation}
(2) The second fundamental form
\[%
\begin{array}
[c]{c}%
\max_{L_{t}}|A|(t)\leq3\Lambda_{0},\text{ }t\in\lbrack0,T_{0}].
\end{array}
\]
(3) $L_{t}$ is $\frac{2}{3}\kappa_{0}$-noncollapsed on the scale $r_{0}$ for
$t\in\lbrack0,T_{0}]$.\newline Thus, the solution $L_{t}\in\mathcal{A}%
(\frac{2}{3}\kappa_{0},r_{0},3\Lambda_{0},\epsilon_{0}^{\frac{1}{n+2}})$ for
$t\in\lbrack0,T_{0}]$, and by using Lemma \ref{10}, we obtain that
\[%
\begin{array}
[c]{c}%
L_{t}\in\mathcal{A}(\frac{1}{3}\kappa_{0},r_{0},6\Lambda_{0},2\epsilon
_{0}^{\frac{1}{n+2}}),\text{ }t\in\lbrack0,T_{0}+\delta]
\end{array}
\]
for some\ $\delta=\delta(n,\Lambda_{0},K_{1})>0.$
\end{lemma}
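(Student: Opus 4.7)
The plan is to execute a three-part bootstrap: we must improve each of the a priori bounds in Hypothesis 2 (on $\kappa$, $\Lambda$, and $\epsilon$) by a factor of two, uniformly for all $t \in [0, T_0]$. The constant $\epsilon_0$ will be chosen small relative to $\kappa_0, r_0, \Lambda_0, V_0, |K+2|$ and to the uniform ambient curvature bounds $K_k$. The natural order is to prove Part (1) first, then use it together with Lemma \ref{8} to get Part (3), and finally Part (2), which is the delicate step.

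For Part (1), I would apply the integral inequality (\ref{42}) of Lemma \ref{11} with $\Lambda = 6\Lambda_0$ and $\epsilon = 2\epsilon_0^{1/(n+2)}$. Since $K+2 < 0$, choosing $\epsilon_0$ so that $24\Lambda_0 \epsilon_0^{1/(n+2)} \leq |K+2|$ makes $2(K+2 + \Lambda\epsilon) \leq K+2 < 0$. Integrating then yields $\int_{L_t}|H|^2 \,d\mu_t \leq V_0\epsilon_0^2 e^{(K+2)t}$. Theorem \ref{12}(1) applied under the pointwise bound $|A|\leq 6\Lambda_0$ supplies a uniform $|\nabla H|(t) \leq C_1$ for $t \in [\tau, T_0]$, with $\tau$ from Lemma \ref{10}. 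Lemma \ref{13} with $S = H$ and noncollapsing constant $\kappa_0/3$ then gives
\begin{equation*}
\max_{L_t}|H|(t) \leq \bigl(\sqrt{3/\kappa_0} + C_1\bigr)\, V_0^{1/(n+2)}\, \epsilon_0^{2/(n+2)}\, e^{(K+2)t/(n+2)},
\end{equation*}
and a further smallness condition on $\epsilon_0$ reduces this to (\ref{60}).

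For Part (3), I would invoke Lemma \ref{8}. Splitting $E(t)=\int_0^t\max(|A||H|+|H|^2)\,ds$ at $\tau$, using Hypothesis 2 on $[0,\tau]$ and the exponential decay from (1) on $[\tau, T_0]$, the integrability of $e^{(K+2)s/(n+2)}$ yields
\begin{equation*}
E(t) \leq C(\Lambda_0,\,K+2,\,\tau)\,\epsilon_0^{1/(n+2)}
\end{equation*}
uniformly in $t\in[0,T_0]$. Choosing $\epsilon_0$ so that $(n+1)E(t) \leq \log(3/2)$ makes $\mathrm{Vol}(B_t(p,\rho)) \geq \tfrac{2}{3}\kappa_0\rho^n$ for all $\rho \in (0, r_0]$, which is exactly the claimed noncollapsing.

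Part (2) is the main obstacle. The evolution equation (\ref{A}) for $|A|^2$, bounded against the a priori estimates, takes the schematic form $\partial_t|A|^2 \leq \Delta|A|^2 - 2|\nabla A|^2 + Q$, where $Q=Q(|A|,|H|,K_0,K_1)$ contains a positive destabilizing term $2|K+2||A|^2$ (arising from $-2(K+2)|A|^2$ when $K+2<0$) that cannot be absorbed by a naive maximum principle; integrating against a bounded RHS would only give linear growth in $t$. The strategy is to combine: (i) the exponential decay of $|H|$ from Part (1), which makes $\int_0^\infty |H|^2\, ds$ of order $\epsilon_0^{2/(n+2)}$; (ii) higher-order regularity from Theorem \ref{12}(1) giving uniform bounds on $|\nabla^k A|$ for $t\geq \tau$; (iii) the improved noncollapsing from Part (3), which supplies a uniform Sobolev inequality on each $L_t$; and (iv) the Simons-type identity (\ref{b}) to convert decay of $\nabla^2 H$ into information on $\Delta A$. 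A parabolic Moser iteration for $|A|^2$, in the spirit of \cite{li}, then upgrades integral control to an $L^\infty$ bound that remains within $3\Lambda_0$ of the initial value $\Lambda_0$ provided $\epsilon_0$ is small. Combining Parts (1)--(3) gives $L_t \in \mathcal{A}(\tfrac{2}{3}\kappa_0, r_0, 3\Lambda_0, \epsilon_0^{1/(n+2)})$ on $[0, T_0]$, and a single application of Lemma \ref{10} produces the final extension to $[0, T_0+\delta]$.
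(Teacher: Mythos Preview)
Your arguments for Parts (1) and (3) are correct and match the paper's proof essentially line for line.

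For Part (2), however, you have taken a wrong turn. You start from the evolution equation (\ref{A}) for $|A|^{2}$, correctly notice the destabilizing term $-2(K+2)|A|^{2}$, and then propose to overcome it by a parabolic Moser iteration driven by noncollapsing and the Simons identity. This is unnecessarily elaborate, and it is not clear it yields a bound uniform in $T_{0}$: Moser iteration against $\partial_{t}u \leq \Delta u + Cu$ produces constants that depend on the length of the time interval, which is exactly what must be avoided here.

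The paper sidesteps the issue entirely by going back to the \emph{first-order} evolution equation (\ref{3}) for $h_{ijk}$ rather than (\ref{A}). With $H=-\nabla f$, equation (\ref{3}) reads schematically
\[
\partial_{t}h_{ijk} = \nabla_{i}\nabla_{j}H_{k} + H\ast h\ast h + H\ast g + H\ast \overline{Rm},
\]
so that every term on the right carries a factor of $H$ or $\nabla^{2}H$; there is \emph{no} cubic $h\ast h\ast h$ term and no standalone $(K+2)h$ term. Thus
\[
\frac{\partial}{\partial t}|A| \leq |\nabla^{2}H| + c(n)|A|^{2}|H| + (K_{0}+3)|H|.
\]
One then only needs pointwise exponential decay of $|\nabla^{2}H|$. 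The paper gets this exactly as you did for $|H|$: integration by parts gives $\int_{L_{t}}|\nabla^{2}H|^{2}\,d\mu_{t}\leq \int_{L_{t}}|H|\,|\nabla^{4}H|\,d\mu_{t}\leq C_{4}V_{0}\epsilon_{0}^{1/(n+2)}e^{(K+2)t/(n+2)}$, and Lemma \ref{13} (with the $\tfrac{1}{3}\kappa_{0}$ noncollapsing already in Hypothesis 2) converts this to a pointwise bound. Integrating $\partial_{t}|A|$ from $\tau$ to $t$ then yields $|A|(t)\leq |A|(\tau)+C\epsilon_{0}^{c}\leq 2\Lambda_{0}+\Lambda_{0}=3\Lambda_{0}$ for $\epsilon_{0}$ small, with no dependence on $T_{0}$. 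This is the missing idea in your proposal.
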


\begin{proof}
(1) For any $\Lambda_{0}>0$, we can choose $\epsilon_{0}$ small enough such
that $24\Lambda_{0}\epsilon_{0}^{\frac{1}{n+2}}\leq-(K+2).$ By using the
inequality (\ref{42}) in Lemma \ref{11}, we have%
\begin{equation}%
\begin{array}
[c]{lll}%
\int_{L_{t}}|H|^{2}d\mu_{t} & \leq & e^{2(K+2+12\Lambda_{0}\epsilon_{0}%
^{\frac{1}{n+2}})t}\int_{L_{0}}|H|^{2}d\mu_{0}\\
& \leq & e^{(K+2)t}\int_{L_{0}}|H|^{2}d\mu_{0}\\
& \leq & V_{0}\epsilon_{0}^{2}e^{(K+2)t},\text{ }t\in\lbrack0,T_{0}].
\end{array}
\label{61}%
\end{equation}
By the fact $L_{t}\in\mathcal{A}(\frac{1}{3}\kappa_{0},r_{0},6\Lambda
_{0},2\epsilon_{0}^{\frac{1}{n+2}}),$ $(t\in\lbrack0,T_{0}])$ and Theorem
\ref{12}, there exists a constant $C_{1}=C_{1}(n,\Lambda_{0},K_{2})$ such
that
\begin{equation}%
\begin{array}
[c]{c}%
|\nabla A|(t)\leq C_{1}(n,\Lambda_{0},K_{2},\tau),\text{\ }t\in\lbrack
\tau,T_{0}],
\end{array}
\label{62}%
\end{equation}
where $\tau=\tau(n,\Lambda_{0},K_{1})$ as in Lemma \ref{10}. By using Lemma
\ref{13}, (\ref{61}) and (\ref{62}), we have
\[%
\begin{array}
[c]{c}%
|H|(t)\leq(\frac{3}{\sqrt{\kappa_{0}}}+C_{1})\left(  V_{0}\epsilon_{0}%
^{2}e^{(K+2)t}\right)  ^{\frac{1}{n+2}},\text{\ }t\in\lbrack\tau,T_{0}],
\end{array}
\]
where we have used that fact that $L_{t}$ is $\frac{1}{3}\kappa_{0}%
$-noncollapsed on the scale $r_{0}$ and $V_{0}\epsilon_{0}^{2}\leq r_{0}%
^{n+2}$ if $\epsilon_{0}$ is small enough. Set $(\frac{3}{\sqrt{\kappa_{0}}%
}+C_{1})\left(  V_{0}\epsilon_{0}\right)  ^{\frac{1}{n+2}}\leq1$ for
$\epsilon_{0}$ small enough, we have
\[%
\begin{array}
[c]{c}%
|H|(t)\leq\epsilon_{0}^{\frac{1}{n+2}}e^{\frac{K+2}{n+2}t},\ t\in\lbrack
\tau,T_{0}].
\end{array}
\]
(2) By Theorem \ref{12}, there exist some constants $C_{k}=C_{k}(n,\Lambda
_{0},K_{k+1})$ such that%
\[
|\nabla^{k}A|(t)\leq C_{k}(n,\Lambda_{0},K_{k+1},\tau),\ t\in\lbrack\tau
,T_{0}].
\]
So by property (a) and integration by parts, we can obtain%
\begin{equation}%
\begin{array}
[c]{l}%
\int_{L_{t}}|\nabla^{2}H|^{2}d\mu_{t}\leq\int_{L_{t}}|H||\nabla^{4}H|d\mu
_{t}\leq C_{4}V_{0}\epsilon_{0}^{\frac{1}{n+2}}e^{\frac{K+2}{n+2}t},\text{
}t\in\lbrack\tau,T_{0}].
\end{array}
\label{63}%
\end{equation}
where we used the fact that $\mathrm{Vol}(L_{t})$ is decreasing along the
flow. By Lemma \ref{13} again, we have%
\begin{equation}%
\begin{array}
[c]{l}%
|\nabla^{2}H|^{2}(t)\leq(\frac{3}{\sqrt{\kappa_{0}}}+C_{3})(C_{4}V_{0}%
\epsilon_{0}^{\frac{1}{n+2}}e^{\frac{K+2}{n+2}t})^{\frac{1}{n+2}}.
\end{array}
\label{64}%
\end{equation}
By the equation (\ref{3}) with $H=-\nabla f$, we have%
\begin{equation}%
\begin{array}
[c]{c}%
\frac{\partial}{\partial t}|A|\leq|\nabla^{2}H|+c(n)|A|^{2}|H|+(|R_{\alpha
\beta\gamma\delta}|+3)|H|,
\end{array}
\label{65}%
\end{equation}
So we have%
\[%
\begin{array}
[c]{lll}%
|A|(t) & \leq & |A|(\tau)+\int_{\tau}^{t}|\nabla^{2}H|+(c(n)|A|^{2}%
+K_{0})|H|\\
& \leq & 2\Lambda_{0}+(\frac{3}{\sqrt{\kappa_{0}}}+C_{3})(C_{4}V_{0}%
\epsilon_{0}^{\frac{1}{n+2}})^{\frac{1}{n+2}}\frac{(n+2)^{2}}{|K+2|}\\
&  & +\left(  K_{0}+36c(n)\Lambda_{0}^{2}\right)  \epsilon_{0}^{\frac{1}{n+2}%
}\frac{n+2}{|K+2|}\\
& \leq & 3\Lambda_{0},
\end{array}
\]
for $\epsilon_{0}$ small enough.

(3) By using the definition of $E(t)$ and Lemma \ref{10}, we have%
\begin{equation}%
\begin{array}
[c]{lll}%
E(t) & \leq & \int_{0}^{\tau}\max_{L_{s}}(|A||H|+|H|^{2})ds+\int_{\tau}%
^{t}\max_{L_{s}}(|A||H|+|H|^{2})ds\\
& \leq & 4\Lambda_{0}\epsilon_{0}\tau+4\epsilon_{0}^{2}\tau+(3\Lambda
_{0}\epsilon_{0}^{\frac{1}{n+2}}+\epsilon_{0}^{\frac{2}{n+2}})\frac
{n+2}{|K+2|}\\
& \leq & \frac{1}{n+1}\log\frac{3}{2},
\end{array}
\label{66}%
\end{equation}
for $t\in\lbrack0,T_{0}]$ and $\epsilon_{0}$ is small enough. By Lemma
\ref{10}, we know that $L_{t}$ is $\frac{2}{3}\kappa_{0}$-noncollapsed on the
scale $r_{0}$ for $t\in\lbrack0,T_{0}]$.
\end{proof}

{\large Proof of Theorem \ref{22}: }

\begin{proof}
Suppose that $L_{0}\in\mathcal{A}(\kappa_{0},r_{0},\Lambda_{0},\epsilon_{0})$
for any positive constants $\kappa_{0},$ $r_{0},$ $\Lambda_{0}$ and
$\epsilon_{0}$ small enough. Set%
\[%
\begin{array}
[c]{c}%
t_{0}=\sup\{t>0|\text{ }L_{s}\in\mathcal{A}(\frac{1}{3}\kappa_{0}%
,r_{0},6\Lambda_{0},2\epsilon_{0}^{\frac{1}{n+2}}),\text{ }s\in\lbrack0,t)\}.
\end{array}
\]
In the following, we will show that $t_{0}=\infty$. Suppose that $t_{0}%
<\infty$. By Lemma \ref{16}, we know that%
\[%
\begin{array}
[c]{c}%
L_{t}\in\mathcal{A}(\frac{2}{3}\kappa_{0},r_{0},3\Lambda_{0},\epsilon
_{0}^{\frac{1}{n+2}}),
\end{array}
\]
for $t\in\lbrack0,t_{0})$ and $\epsilon_{0}=\epsilon_{0}(\kappa_{0}%
,r_{0},\Lambda_{0},V_{0},n,K_{5})>0$. By using Lemma \ref{16} again, the
solution $L_{t}$ can be extended to $[0,t_{0}+\delta]$ and $L_{t}$ is
contained in $\mathcal{A}(\frac{2}{3}\kappa_{0},r_{0},3\Lambda_{0}%
,\epsilon_{0}^{\frac{1}{n+2}})$ which contradicts the definition of $t_{0}$.
So we know that $t_{0}=\infty$ and%
\[%
\begin{array}
[c]{c}%
L_{t}\in\mathcal{A}(\frac{1}{3}\kappa_{0},r_{0},6\Lambda_{0},2\epsilon
_{0}^{\frac{1}{n+2}}),\text{ }%
\end{array}
\]
for $t\in\lbrack0,\infty).$ By using (\ref{60}), we obtain that the mean
curvature vector will decay exponentially to zero and the flow will converge
to a smooth minimal Legendrian submanifold.
\end{proof}

{\large Proof of Theorem \ref{23}:}

\begin{proof}
By the conditions (\ref{70}) and Lemma \ref{9}, there exists a constant
$T_{1}=T(n,\Lambda_{0},K_{1})$ such that%
\begin{equation}
|A|(t)\leq2\Lambda_{0},\text{ }t\in\lbrack0,T_{1}].\label{71}%
\end{equation}
By using the inequality
\[
2H_{i}H_{j}H_{k}h^{kij}\leq|A|^{2}|H|^{2}+|H|^{4}\leq4\Lambda_{0}^{2}%
|H|^{2}+|H|^{4}%
\]
and the equation (\ref{44}) in the proof of Lemma \ref{11}, we have%
\[%
\begin{array}
[c]{lll}%
\frac{d}{dt}\int_{L_{t}}|H|^{2}d\mu_{t} & \leq & \int_{L_{t}}[2(K+2)|H|^{2}%
+2H_{i}H_{j}H_{k}h^{kij}-|H|^{4}]d\mu_{t}\\
& \leq & (2\left(  K+2)+4\Lambda_{0}^{2}\right)  \int_{L_{t}}|H|^{2}d\mu_{t}\\
& \leq & 4\Lambda_{0}^{2}\int_{L_{t}}|H|^{2}d\mu_{t}.
\end{array}
\]
Thus we have%
\begin{equation}%
\begin{array}
[c]{l}%
\int_{L_{t}}|H|^{2}d\mu_{t}\leq e^{4\Lambda_{0}^{2}t}\int_{L_{0}}|H|^{2}%
d\mu_{0}\leq\epsilon_{0}e^{4\Lambda_{0}^{2}t},\text{ }t\in\lbrack0,T_{1}].
\end{array}
\label{73}%
\end{equation}
So we can choose a positive constant $t_{0}=t_{0}(n,\Lambda_{0},K_{1})<T_{1} $
such that%
\begin{equation}%
\begin{array}
[c]{l}%
\int_{L_{t}}|H|^{2}d\mu_{t}\leq2\epsilon_{0},\text{ }t\in\lbrack0,t_{0}].
\end{array}
\label{74}%
\end{equation}
By using Proposition 2.2 in \cite{ch}, we can know that the injectivity radius
of $L_{t}$ is bounded from below%
\begin{equation}%
\begin{array}
[c]{c}%
\mathrm{inj}(L_{t})\geq\iota,\text{ }t\in\lbrack\frac{1}{2}t_{0},t_{0}].
\end{array}
\label{75}%
\end{equation}
for some constant $\iota=\iota(n,\Lambda_{0},K_{0},\iota_{0})$. By Gauss
equation and (\ref{71}) the intrinsic curvature of $L_{t}$ is uniformly
bounded%
\begin{equation}%
\begin{array}
[c]{c}%
|Rm|\leq C(K_{0},\Lambda),\text{ }t\in\lbrack\frac{1}{2}t_{0},t_{0}].
\end{array}
\label{76}%
\end{equation}
By (\ref{75}) and (\ref{76}) and the volume comparison theorem, there exist
$\kappa_{0}=\kappa_{0}(n,\iota_{0},K_{0},\Lambda_{0})$ and $r_{0}%
=r_{0}(n,\iota_{0},K_{0},\Lambda_{0})$ such that $L_{t}$ is $\kappa_{0}%
$-noncollapsed on the scale $r_{0}$ for all $t\in\lbrack\frac{1}{2}t_{0}%
,t_{0}].$

By (\ref{71}) and Theorem \ref{12}, we have%
\[%
\begin{array}
[c]{c}%
|\nabla A|\leq C_{1}(n,\Lambda_{0},K_{2}),\text{ }t\in\lbrack\frac{1}{2}%
t_{0},t_{0}].
\end{array}
\]
By using Lemma \ref{13}, we have%
\[%
\begin{array}
[c]{l}%
|H|(t)\leq(\frac{1}{\sqrt{\kappa_{0}}}+2C_{1})(2\epsilon_{0})^{\frac{1}{n+2}%
},\text{ }t\in\lbrack\frac{1}{2}t_{0},t_{0}].
\end{array}
\]
Applying Theorem \ref{22}, we know that Theorem \ref{23} is true.
\end{proof}

Now, we will obtain the following results for $\eta$-Einstein Sasakian
manifold with the $\eta$-Einstein constant $K+2\geq0$.

\begin{theorem}
\label{Thm 3.2}Let $(M,\lambda,g,\mathbf{T},J)$ be an $\eta$-Einstein Sasakian
manifold with the $\eta$-Einstein constant $K+2\geq0$ and $L$ be a compact
Legendrian submanifold smoothly immersed in $M$. For any $\kappa_{0},$
$r_{0},$ $V_{0},$ $\Lambda_{0},$ $\delta_{0}>0$, there exists $\epsilon
_{0}=\epsilon_{0}(\kappa_{0},r_{0},V_{0},\Lambda_{0},\delta_{0},K+2,K_{5}%
,\iota_{0})>0$ such that if $L_{0}$ satisfies
\begin{equation}%
\begin{array}
[c]{c}%
\lambda_{1}\geq K+2+\delta_{0},\text{ \textrm{Vol}}(L_{0})\leq V_{0}%
,\text{\ }|A|\leq\Lambda_{0},\text{\ }|H|^{2}\leq\epsilon_{0}.
\end{array}
\label{70A}%
\end{equation}
Then the Legendrian mean curvature flow (\ref{27}) with the exact mean
curvature form $H_{0}$ for $t=0$ will converge exponentially fast to a minimal
Legendrian in $M$.
\end{theorem}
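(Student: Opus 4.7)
The plan is to follow the architecture of Theorem \ref{22} essentially verbatim, but replace the ``reservoir'' of exponential decay coming from the negative $\eta$-Einstein constant $K+2<0$ with the one provided by the spectral gap $\lambda_1-(K+2)\ge\delta_0>0$ coming from inequality (\ref{43}) of Lemma \ref{11}. That is, I will define the same class
\[
\mathcal{A}(\kappa,r,\Lambda,\epsilon)=\{L:L\ \kappa\text{-noncollapsed on scale }r,\ |A|\le\Lambda,\ |H|\le\epsilon\}
\]
and try to show that if $L_0\in\mathcal{A}(\kappa_0,r_0,\Lambda_0,\epsilon_0)$ with $\lambda_1(L_0)\ge K+2+\delta_0$ and $\mathrm{Vol}(L_0)\le V_0$, then $L_t\in\mathcal{A}(\frac13\kappa_0,r_0,6\Lambda_0,2\epsilon_0^{1/(n+2)})$ for every $t\ge0$, with $|H|$ decaying exponentially.

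The first step is the short-time persistence: Lemma \ref{9} and Lemma \ref{10} are ambient statements that are insensitive to the sign of $K+2$, so they give $L_t\in\mathcal{A}(\tfrac12\kappa_0,r_0,2\Lambda_0,2\epsilon_0)$ on $[0,\tau]$ with $\tau=\tau(n,\Lambda_0,K_1)$. During such a short interval the induced metric $g_{ij}(t)$ stays uniformly $C^0$-close to $g_{ij}(0)$ (because $|\partial_tg_{ij}|\le 2|A||H|\le C\epsilon_0$ by (\ref{i})), so by min-max continuity of eigenvalues under $C^0$-perturbations of the metric we can arrange, by choosing $\epsilon_0$ small, that
\[
\lambda_1(L_t)\ge K+2+\tfrac{\delta_0}{2}, \qquad t\in[0,T_0].
\]
This is the crucial input that replaces the sign assumption $K+2<0$ from Theorem \ref{22}.

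The second step is the analogue of Lemma \ref{16}. With $\Lambda\le 6\Lambda_0$ and $\epsilon\le 2\epsilon_0^{1/(n+2)}$ we may take $\epsilon_0$ so small that $12\Lambda_0\epsilon_0^{1/(n+2)}\le\delta_0/4$, and then the differential inequality (\ref{43}) of Lemma \ref{11} yields
\[
\frac{d}{dt}\int_{L_t}|H|^2 d\mu_t\le -2\Bigl(\tfrac{\delta_0}{2}-\tfrac{\delta_0}{4}\Bigr)\int_{L_t}|H|^2 d\mu_t=-\tfrac{\delta_0}{2}\int_{L_t}|H|^2 d\mu_t
\]
on $[0,T_0]$, so that $\int_{L_t}|H|^2 d\mu_t\le V_0\epsilon_0^2 e^{-\delta_0 t/2}$. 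Combined with the higher-order estimates from Theorem \ref{12} (using $L_t\in\mathcal{A}(\tfrac13\kappa_0,r_0,6\Lambda_0,2\epsilon_0^{1/(n+2)})$) and the $L^2$-to-$L^\infty$ bound of Lemma \ref{13}, this upgrades to a pointwise exponential decay $\max_{L_t}|H|\le\epsilon_0^{1/(n+2)}e^{-\delta_0 t/(2(n+2))}$ for $t\in[\tau,T_0]$. Then exactly the argument of Lemma \ref{16}, using the interpolation inequality (\ref{65}) and the formula $E(t)=\int_0^t\max(|A||H|+|H|^2)ds$, gives $|A|\le 3\Lambda_0$ throughout and keeps $L_t$ $\tfrac23\kappa_0$-noncollapsed on scale $r_0$, i.e.\ $L_t\in\mathcal{A}(\tfrac23\kappa_0,r_0,3\Lambda_0,\epsilon_0^{1/(n+2)})$ on $[0,T_0]$. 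A standard open/closed continuity argument, identical to the proof of Theorem \ref{22}, then shows $T_0=\infty$ and convergence to a minimal Legendrian.

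The main obstacle I anticipate is the eigenvalue persistence: the spectral gap hypothesis is imposed only on $L_0$, but the decay mechanism needs $\lambda_1(L_t)\ge K+2+\delta_0/2$ along the entire flow. I would handle this by feeding $\lambda_1$-continuity into the bootstrap: on the a priori interval where $L_t\in\mathcal{A}(\tfrac13\kappa_0,r_0,6\Lambda_0,2\epsilon_0^{1/(n+2)})$, the exponential decay of $|H|$ implies a uniformly small total deformation $|g(t)-g(0)|_{C^0}\le\int_0^t|\partial_sg|ds\le C\Lambda_0\epsilon_0^{1/(n+2)}(n+2)\delta_0^{-1}/2$, which under a sufficiently small choice of $\epsilon_0=\epsilon_0(\delta_0,\ldots)$ gives $|\lambda_1(L_t)-\lambda_1(L_0)|\le\delta_0/2$ uniformly; this closes the bootstrap. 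The rest of Theorem \ref{Thm 3.1} then follows from Theorem \ref{Thm 3.2} by the same injectivity radius argument (Proposition 2.2 of \cite{ch}, volume comparison) already used in the proof of Theorem \ref{23}, which removes the a priori $\kappa$-noncollapsing assumption.
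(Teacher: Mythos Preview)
Your proposal follows the same open--closed continuity architecture as the paper. The one substantive difference is the mechanism for eigenvalue persistence along the flow. The paper enlarges the bootstrap class to
\[
\mathcal{B}(\kappa,r,\delta,\Lambda,\epsilon)=\mathcal{A}(\kappa,r,\Lambda,\epsilon)\cap\{\lambda_1\ge K+2+\delta\}
\]
and closes the $\lambda_1$-condition via a dedicated differential inequality (Lemma~\ref{L3.2}): differentiating the Rayleigh quotient along the flow and estimating $\partial_t\lambda_1$ in terms of $|A|$, $|H|$, $|\nabla H|$ gives $\sqrt{\lambda_1(t)}\ge e^{-(2\Lambda\epsilon+\epsilon^2)/(2\gamma)}\sqrt{\lambda_1(0)}-\Lambda\epsilon/\gamma$ once $|H|+|\nabla H|\le\epsilon e^{-\gamma t}$; the resulting Lemma~\ref{16a} then replaces Lemma~\ref{16}. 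Your alternative via $C^0$-metric stability of min--max eigenvalues is also valid and in some ways more robust, since it sidesteps the differentiability of $t\mapsto\lambda_1(t)$ when the first eigenspace is degenerate and does not need the $|\nabla H|$ decay. One imprecision to fix: you need the eigenvalue bound in order to \emph{obtain} the exponential decay of $|H|$ via (\ref{43}), so the sentence ``on the a~priori interval where $L_t\in\mathcal{A}(\ldots)$, the exponential decay of $|H|$ implies\ldots'' is circular as written. You must add $\lambda_1\ge K+2+\tfrac{\delta_0}{2}$ to the bootstrap hypothesis (i.e.\ effectively work with the paper's class $\mathcal{B}$) before your $C^0$-metric argument can re-establish it with margin and close the loop. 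With that made explicit, the two proofs are equivalent, and your reduction of Theorem~\ref{Thm 3.1} to Theorem~\ref{Thm 3.2} via the injectivity-radius argument of \cite{ch} matches the paper's.
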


In order to proof the Theorem \ref{Thm 3.2}, we need the following lemma which
show that the first eigenvalue of the Laplacian have a positive lower bound if
the mean curvature vector decays exponentially. The following eigenvalue
estimate is similar to (\cite{li}, Lemma 3.2). For the completeness, we sketch
the proof here.

\begin{lemma}
\label{L3.2} Let $(M,\lambda,g,\mathbf{T},J)$ be an $\eta$-Einstein Sasakian
manifold. Along the Lagrangian mean curvature flow, for any constants $T,$
$\epsilon,$ $\gamma,$ $\Lambda>0$, if the solution $L_{t}$ satisfies%
\begin{equation}%
\begin{array}
[c]{c}%
|A|\leq\Lambda,\text{\ }|H|+|\nabla H|\leq\epsilon e^{-\gamma t},\text{\ }%
t\in\lbrack0,T],
\end{array}
\label{82}%
\end{equation}
then the first eigenvalue $\lambda_{1}(t)$ satisfies%
\begin{equation}%
\begin{array}
[c]{c}%
\sqrt{\lambda_{1}(t)}\geq e^{-\frac{1}{2\gamma}(2\Lambda\epsilon+\epsilon
^{2})}\sqrt{\lambda_{1}(0)}-\frac{\Lambda\epsilon}{\gamma},\text{\ }%
t\in\lbrack0,T].
\end{array}
\label{83}%
\end{equation}

\end{lemma}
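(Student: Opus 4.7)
The plan is to derive a first-order ordinary differential inequality for $\sqrt{\lambda_1(t)}$ along the Legendrian mean curvature flow and then integrate it via Gronwall. First select a smooth one-parameter family $u = u(t,\cdot)$ of normalized first eigenfunctions of $\Delta_{g(t)}$ with $\int_{L_t} u^2\, d\mu_t = 1$ and $\int_{L_t} u\, d\mu_t = 0$; if $\lambda_1(t)$ fails to be simple on a subset of times, the standard fix is to work with the upper Dini derivative of $\sqrt{\lambda_1}$, which still yields the integral inequality.

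Next, differentiate $\lambda_1(t) = \int_{L_t} |\nabla u|^2_{g(t)}\, d\mu_t$ along the flow. Using $\partial_t g^{ij} = 2 H_k h^{kij}$ from (\ref{i}) and $\partial_t d\mu_t = -|H|^2 d\mu_t$ from (\ref{V}), integration by parts on the $\nabla(\partial_t u)$ term against $\Delta u = -\lambda_1 u$, and the identity $2\int u\, \partial_t u\, d\mu_t = -\int u^2\, \partial_t d\mu_t$ obtained from differentiating the $L^2$ normalization, one arrives at
\begin{align*}
\frac{d\lambda_1}{dt} = 2 \int_{L_t} H_k h^{kij} \nabla_i u \nabla_j u\, d\mu_t + \lambda_1 \int_{L_t} u^2 |H|^2 d\mu_t - \int_{L_t} |\nabla u|^2 |H|^2 d\mu_t.
\end{align*}

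Now apply the hypotheses. Dropping the nonnegative middle term, bounding the first with $|A|\leq\Lambda$, and using the rough estimate $|H|^2 \leq \epsilon|H| \leq \epsilon^2 e^{-\gamma t}$ (exploiting the a priori smallness for one factor of $|H|$ and the exponential decay hypothesis for the other) on the third term, one obtains the multiplicative lower bound $\frac{d\lambda_1}{dt}\geq -(2\Lambda\epsilon+\epsilon^2) e^{-\gamma t}\lambda_1$. An additional additive forcing term of order $\Lambda\epsilon e^{-\gamma t}\sqrt{\lambda_1}$ is extracted by performing one integration by parts on the cross term $\int H_k h^{kij}\nabla_i u\nabla_j u\, d\mu_t$, in which $\nabla h$ is traded for $\nabla H$ (plus bounded ambient-curvature terms) via the Codazzi identity (\ref{C}), and then invoking $|\nabla H|\leq\epsilon e^{-\gamma t}$. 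Dividing by $2\sqrt{\lambda_1}$ gives
\begin{align*}
\frac{d}{dt}\sqrt{\lambda_1} \geq -\tfrac{1}{2}(2\Lambda\epsilon+\epsilon^2) e^{-\gamma t}\sqrt{\lambda_1} - \Lambda\epsilon e^{-\gamma t}.
\end{align*}

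Finally apply Gronwall with integrating factor $\exp\bigl(\tfrac{1}{2\gamma}(2\Lambda\epsilon+\epsilon^2)(1-e^{-\gamma s})\bigr)$: the homogeneous propagator is bounded below by $e^{-\frac{1}{2\gamma}(2\Lambda\epsilon+\epsilon^2)}$, and the additive forcing contributes at most $\int_0^\infty \Lambda\epsilon e^{-\gamma s}\, ds = \Lambda\epsilon/\gamma$, yielding (\ref{83}). The main obstacle is the bookkeeping in the pointwise-estimate step: one must arrange the integration by parts on the cross term so that the $\nabla H$ contribution appears as an \emph{additive} term in the ODI for $\sqrt{\lambda_1}$ rather than being absorbed into the multiplicative coefficient, as this is exactly what produces the $-\Lambda\epsilon/\gamma$ correction in (\ref{83}) and is where the $|\nabla H|$ hypothesis is essential.
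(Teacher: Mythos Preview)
Your approach is the same as the paper's: differentiate $\lambda_1(t)$ along the flow using a normalized first eigenfunction, obtain a first-order differential inequality for $\sqrt{\lambda_1}$, and integrate. Your variation formula
\[
\frac{d\lambda_1}{dt}=2\int_{L_t}H_k h^{kij}\nabla_i u\,\nabla_j u\,d\mu_t+\lambda_1\int_{L_t}u^2|H|^2\,d\mu_t-\int_{L_t}|\nabla u|^2|H|^2\,d\mu_t
\]
is correct. In fact it is cleaner than the paper's: the paper writes $\partial_t\Delta=2H_kh^{kij}\nabla_i\nabla_j$, which omits the Christoffel variation $-g^{ij}(\partial_t\Gamma_{ij}^{\,k})\partial_k$. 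When that term is restored (using the traced Codazzi identity $\nabla^i h_{ijk}=\nabla_jH_k$ in the $\eta$-Einstein case), it exactly cancels the extra $2\int\nabla_iH_k\,h^{kij}\varphi\,\nabla_j\varphi\,d\mu$ term appearing in the paper's formula, and one recovers precisely your expression.

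The consequence is that your \emph{multiplicative} estimate already suffices. Dropping the nonnegative middle term and bounding the other two pointwise gives
\[
\frac{d\lambda_1}{dt}\ \ge\ -\bigl(2\Lambda\epsilon\,e^{-\gamma t}+\epsilon^2 e^{-2\gamma t}\bigr)\lambda_1,
\]
hence $\sqrt{\lambda_1(t)}\ge e^{-\frac{1}{2\gamma}(2\Lambda\epsilon+\epsilon^2)}\sqrt{\lambda_1(0)}$, which is stronger than (\ref{83}) and does not even use the hypothesis on $|\nabla H|$. The $-\Lambda\epsilon/\gamma$ in (\ref{83}) is an artifact of the paper's version of the variation formula; you do not need to reproduce it.

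Your paragraph about ``extracting an additional additive forcing term'' should therefore be deleted. As written it is also a genuine gap: you first bound the cross term $2\int H_kh^{kij}\nabla_iu\,\nabla_ju$ pointwise to get the $2\Lambda\epsilon$ in the multiplicative coefficient, and then propose to integrate the \emph{same} term by parts to produce an additive $\Lambda\epsilon e^{-\gamma t}\sqrt{\lambda_1}$. You cannot use one integral twice in two different estimates; that is double counting. Since the additive piece is unnecessary, simply stop after the multiplicative Gronwall step.
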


\begin{proof}
Let $\varphi(t,x)$ be a eigenfunction of the Laplacian operator $\Delta$ with
respect to the induced metric on $L_{t}$ satisfying%
\begin{equation}%
\begin{array}
[c]{c}%
-\Delta\varphi=\lambda_{1}(t)\varphi,\text{ }\int_{L_{t}}\varphi^{2}d\mu
_{t}=1.
\end{array}
\label{83a}%
\end{equation}
By integrating the first equation, the corresponding first eigenvalue
$\lambda_{1}(t)$ will satisfy%
\[%
\begin{array}
[c]{c}%
\lambda_{1}(t)=\int_{L_{t}}\left\vert \nabla\varphi\right\vert ^{2}d\mu_{t}.
\end{array}
\]
Taking derivative with respect to $t$ about the second equation of
(\ref{83a}), we have%
\begin{equation}%
\begin{array}
[c]{c}%
\int_{L_{t}}[2\varphi\frac{\partial}{\partial t}\varphi-\varphi^{2}%
|H|^{2}]d\mu_{t}=0.
\end{array}
\label{84}%
\end{equation}
Then, by using $\frac{\partial}{\partial t}\Delta=\frac{\partial}{\partial
t}g^{_{ij}}\nabla_{i}\nabla_{j}=2H_{k}h^{kij}\nabla_{i}\nabla_{j}$ and
(\ref{84}), one obtain
\begin{equation}%
\begin{array}
[c]{lll}%
\frac{\partial}{\partial t}\lambda_{1}(t) & = & -\frac{\partial}{\partial
t}\int_{L_{t}}\varphi\Delta\varphi d\mu_{t}\\
& = & -\int_{L_{t}}[2(\frac{\partial}{\partial t}\varphi)\Delta\varphi
+\varphi(\frac{\partial}{\partial t}\Delta)\varphi-\varphi\Delta\varphi
|H|^{2}]d\mu_{t}\\
& = & \int_{L_{t}}[\lambda_{1}(2\varphi\frac{\partial}{\partial t}%
\varphi-\varphi^{2}|H|^{2})-2H_{k}h^{kij}\varphi\nabla_{i}\nabla_{j}%
\varphi]d\mu_{t}\\
& = & \int_{L_{t}}-2H_{k}h^{kij}\varphi\nabla_{i}\nabla_{j}\varphi d\mu_{t}\\
& = & 2\int_{L_{t}}[H_{k}h^{kij}\nabla_{i}\varphi+\nabla_{i}(H_{k}%
h^{kij})\varphi]\nabla_{j}\varphi d\mu_{t}.
\end{array}
\label{85}%
\end{equation}
From the Codazzi equation we have%
\[%
\begin{array}
[c]{c}%
\nabla_{i}h_{j}^{ik}-\nabla_{j}h_{i}^{ik}=-\frac{1}{2}g^{kl}R_{\alpha\beta
}F_{l}^{\alpha}v_{j}^{\beta}=-\frac{1}{2}Kg^{kl}g_{\alpha\beta}F_{l}^{\alpha
}v_{j}^{\beta}=0,
\end{array}
\]
if $M$ is an $\eta$-Einstein Sasakian manifold. Combining this with (\ref{85})
and integration by parts%
\[%
\begin{array}
[c]{lll}%
\frac{\partial}{\partial t}\lambda_{1}(t) & = & 2\int_{L_{t}}[H_{k}%
h^{kij}\nabla_{i}\varphi+(H_{k}h^{kij})_{i}\varphi]\nabla_{j}\varphi d\mu
_{t}\\
& = & 2\int_{L_{t}}[H_{k}h^{kij}\nabla_{i}\varphi+(\nabla_{i}H_{k}%
h^{kij}+H_{k}\nabla_{j}H^{k})\varphi]\nabla_{j}\varphi d\mu_{t}\\
& = & 2\int_{L_{t}}[H_{k}h^{kij}\nabla_{i}\varphi+\nabla_{i}H_{k}%
h^{kij}\varphi]\nabla_{j}\varphi d\mu_{t}\\
&  & -\int_{L_{t}}|H|^{2}(\left\vert \nabla\varphi\right\vert ^{2}%
+\varphi\Delta\varphi)d\mu_{t}.
\end{array}
\]
Under the assumption (\ref{82}) and $\int_{L_{t}}\left\vert \varphi
\nabla\varphi\right\vert d\mu_{t}\leq(\int_{L_{t}}\varphi^{2}d\mu_{t}%
)^{\frac{1}{2}}(\int_{L_{t}}\left\vert \nabla\varphi\right\vert ^{2}d\mu
_{t})^{\frac{1}{2}}=\lambda_{1}^{\frac{1}{2}}(t)$, we have%
\[%
\begin{array}
[c]{l}%
\frac{\partial}{\partial t}\lambda_{1}(t)\geq-2\Lambda\epsilon e^{-\gamma
t}\lambda_{1}(t)-2\Lambda\epsilon e^{-\gamma t}\lambda_{1}^{\frac{1}{2}%
}(t)-2\epsilon^{2}e^{-2\gamma t}\lambda_{1}(t),
\end{array}
\]
that is
\[%
\begin{array}
[c]{l}%
\frac{\partial}{\partial t}\lambda_{1}^{\frac{1}{2}}(t)\geq-(\Lambda\epsilon
e^{-\gamma t}+\epsilon^{2}e^{-2\gamma t})\lambda_{1}^{\frac{1}{2}}%
(t)-\Lambda\epsilon e^{-\gamma t}%
\end{array}
\]
for $t\in\lbrack0,T].$ Thus, the first eigenvalue $\lambda_{1}(t)$ will
satisfy%
\[%
\begin{array}
[c]{c}%
e^{-\frac{1}{2\gamma}(2\Lambda\epsilon e^{-\gamma t}+\epsilon^{2}e^{-2\gamma
t})}\lambda_{1}^{\frac{1}{2}}(t)\geq e^{-\frac{1}{2\gamma}(2\Lambda
\epsilon+\epsilon^{2})}\lambda_{1}^{\frac{1}{2}}(0)+\frac{\Lambda\epsilon
}{\gamma}\int_{1}^{e^{-\gamma t}}e^{-\frac{1}{2\gamma}(2\Lambda\epsilon
u+\epsilon^{2}u^{2})}du,
\end{array}
\]
which implies%
\[%
\begin{array}
[c]{c}%
\lambda_{1}^{\frac{1}{2}}(t)\geq e^{-\frac{1}{2\gamma}(2\Lambda\epsilon
+\epsilon^{2})}\lambda_{1}^{\frac{1}{2}}(0)-\frac{\Lambda\epsilon}{\gamma}%
\end{array}
\]
for $t\in\lbrack0,T].$
\end{proof}

\begin{definition}
Let $(M,\lambda,g,\mathbf{T},J)$ be an $\eta$-Einstein Sasakian manifold. For
any positive constants $\delta,$ $\kappa,$ $r,$ $\Lambda,$ $\epsilon$, we
definition the following subspace of Legendrian submanifolds in $M^{2n+1}$ by
\[%
\begin{array}
[c]{ccc}%
\mathcal{B}(\kappa,r,\delta,\Lambda,\epsilon) & = & \left\{  L:L\text{ is
}\kappa\text{-noncollapsed on the scale }r\text{ with }\right. \\
&  & \left.  \text{ \ \ \ \ \ \ \ \ }\lambda_{1}\geq K+2+\delta,\text{
}|A|(t)\leq\Lambda,\text{ }|H|(t)\leq\epsilon\right\}  .
\end{array}
\]

\end{definition}

By using Lemma \ref{8} and Lemma \ref{9}, we can know that the following Lemma
is true.

\begin{lemma}
\label{L5.1}Let $(M^{2n+1},J,T)$ be an $\eta$-Einstein Sasakian manifold and
the solution $L_{t}$ $(t\in\lbrack0,T_{0}])$ of the Legendrian mean curvature
flow with exact mean curvature form $H_{0}$ for $t=0$. If the initial
Legendrian submanifold $L_{0}\in\mathcal{B}(\kappa,r,\delta,\Lambda,\epsilon
)$, then there exists $\tau=\tau(n,\Lambda,\delta,K_{2},K+2)>0$, such that
\[%
\begin{array}
[c]{c}%
L_{t}\in\mathcal{B}(\frac{1}{2}\kappa,r,\frac{2}{3}\delta,2\Lambda
,2\epsilon)\text{\ for }t\in\lbrack0,\tau].
\end{array}
\]

\end{lemma}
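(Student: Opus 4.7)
The plan is to choose $\tau$ as the minimum of three time scales that separately protect the three defining conditions of $\mathcal{B}(\tfrac{1}{2}\kappa, r, \tfrac{2}{3}\delta, 2\Lambda, 2\epsilon)$: the bounds on $|A|$ and $|H|$, the noncollapsing, and the lower bound on $\lambda_{1}$. The first two scales simply recycle the argument already carried out for Lemma \ref{10}; the genuine new content is the third scale.

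For the first two, Lemma \ref{9} supplies $T_{1}=T_{1}(n,\Lambda,K_{0},K_{1})>0$ on which $|A|(t)\leq 2\Lambda$ and $|H|(t)\leq 2\epsilon$, immediately producing the second-fundamental-form and mean-curvature conclusions. Restricted to $[0,T_{1}]$, the exponent $E(t)$ in Lemma \ref{8} satisfies $E(t)\leq(4\Lambda\epsilon+4\epsilon^{2})t$, so requiring additionally $(n+1)(4\Lambda\epsilon+4\epsilon^{2})\tau_{2}\leq\log 2$ forces $\mathrm{Vol}(B_{t}(p,\rho))\geq\tfrac{\kappa}{2}\rho^{n}$ for every $\rho\leq r$ and $t\in[0,\tau_{2}]$, which is the required $\tfrac{\kappa}{2}$-noncollapsing.

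For the eigenvalue I would follow the Rayleigh-quotient differentiation used in the proof of Lemma \ref{L3.2}. Normalizing $\int_{L_{t}}\varphi_{t}^{2}\,d\mu_{t}=1$ and using $\Delta\varphi_{t}=-\lambda_{1}(t)\varphi_{t}$ together with $\tfrac{\partial}{\partial t}g^{ij}=2H_{k}h^{kij}$, the $|H|^{2}$-terms cancel and one is left with
\[
\frac{d}{dt}\lambda_{1}(t)=-2\int_{L_{t}}H_{k}\,h^{kij}\,\varphi_{t}\,\nabla_{i}\nabla_{j}\varphi_{t}\,d\mu_{t}.
\]
Bounding the integrand pointwise by $|H|\,|A|\,|\varphi_{t}|\,|\nabla^{2}\varphi_{t}|$, applying Cauchy--Schwarz, and controlling $\|\nabla^{2}\varphi_{t}\|_{L^{2}}$ via the Bochner identity
\[
\int_{L_{t}}|\nabla^{2}\varphi_{t}|^{2}\,d\mu_{t}=\lambda_{1}(t)^{2}+\int_{L_{t}}\mathrm{Ric}_{L_{t}}(\nabla\varphi_{t},\nabla\varphi_{t})\,d\mu_{t},
\]
together with the Gauss-equation bound $|\mathrm{Ric}_{L_{t}}|\leq R_{\ast}(n,\Lambda,\epsilon,K+2)$ that follows from $|A|\leq 2\Lambda$, $|H|\leq 2\epsilon$ and the $\eta$-Einstein condition, yields a differential inequality
\[
\Bigl|\tfrac{d}{dt}\lambda_{1}(t)\Bigr|\leq 4\Lambda\epsilon\,\bigl(\lambda_{1}(t)+R_{\ast}^{1/2}\lambda_{1}(t)^{1/2}\bigr).
\]
A short-time Gronwall bootstrap, starting from $\lambda_{1}(0)\geq K+2+\delta$, supplies a third scale $\tau_{3}=\tau_{3}(n,\Lambda,\delta,K+2)$ on which $\lambda_{1}(0)-\lambda_{1}(t)\leq\delta/3$, so that $\lambda_{1}(t)\geq K+2+\tfrac{2}{3}\delta$. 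Taking $\tau=\min\{T_{1},\tau_{2},\tau_{3}\}$ completes the proof.

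The delicate point is closing the Gronwall bootstrap on $\lambda_{1}$, which requires an a priori two-sided control on $\lambda_{1}(t)$ over the whole interval; I would handle it by defining $\tau_{3}$ as the first exit time from $\lambda_{1}\in[\lambda_{1}(0)/2,2\lambda_{1}(0)]$ and using the displayed differential inequality itself to bound this exit time below. The $K_{2}$-dependence advertised in the lemma statement enters if one instead prefers the sharper form of the evolution identity from Lemma \ref{L3.2} in which integration by parts produces an explicit $\nabla H$-term; a Shi-type maximum-principle argument applied to the evolution inequality (\ref{100}) for $|\nabla A|^{2}$ then supplies the pointwise bound on $|\nabla H|$ needed to close the estimate, in exact analogy with the corresponding step in the proof of Lemma \ref{L3.2}.
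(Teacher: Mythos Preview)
Your argument is essentially correct, and in fact considerably more detailed than what the paper offers. The paper disposes of this lemma in a single sentence before the statement (``By using Lemma~\ref{8} and Lemma~\ref{9}, we can know that the following Lemma is true''), so there is no proof in the paper to compare against for the eigenvalue part; the authors evidently regard the $\lambda_{1}$ control as implicit in the machinery already built, presumably via the short-time adaptation of Lemma~\ref{L3.2} that you describe in your final paragraph. Your handling of the $|A|$, $|H|$, and noncollapsing conditions via Lemmas~\ref{8} and \ref{9} is exactly the route the paper cites.

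Two small points on your eigenvalue step. First, the Bochner identity has the opposite sign: integrating $\tfrac{1}{2}\Delta|\nabla\varphi|^{2}=|\nabla^{2}\varphi|^{2}+\langle\nabla\varphi,\nabla\Delta\varphi\rangle+\mathrm{Ric}(\nabla\varphi,\nabla\varphi)$ gives
\[
\int_{L_{t}}|\nabla^{2}\varphi_{t}|^{2}\,d\mu_{t}=\lambda_{1}(t)^{2}-\int_{L_{t}}\mathrm{Ric}_{L_{t}}(\nabla\varphi_{t},\nabla\varphi_{t})\,d\mu_{t},
\]
but since you immediately pass to $|\mathrm{Ric}_{L_{t}}|\leq R_{\ast}$ the conclusion is unaffected. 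Second, $\lambda_{1}(t)$ need not be differentiable if the first eigenvalue has multiplicity; the standard fix is to work with upper Dini derivatives or to argue via the Rayleigh quotient of a fixed test function, and this is routine. Your remark that the advertised $K_{2}$-dependence is accounted for by the alternative $\nabla H$ route (Lemma~\ref{L3.2} together with a Shi-type bound on $|\nabla A|$ from \eqref{100}) is accurate and is presumably what the authors have in mind.
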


So we can obtain the following results.

\begin{lemma}
\label{16a}Let $(M,\lambda,g,\mathbf{T},J)$ be an $\eta$-Einstein Sasakian
manifold and the solution $L_{t}$ $(t\in\lbrack0,T_{0}])$ of the Legendrian
mean curvature flow with exact mean curvature form $H_{0}$ for $t=0$. For any
$\kappa_{0},$ $r_{0},$ $\Lambda_{0},$ $V_{0},$ $T_{0}>0$, assume that there
exists a constant $\epsilon_{0}=\epsilon_{0}(\kappa_{0},r_{0},\Lambda
_{0},V_{0})$ such that%
\[%
\begin{array}
[c]{l}%
(a).\text{ }L_{0}\in\mathcal{B}(\kappa_{0},r_{0},\delta_{0},\Lambda
_{0},\epsilon_{0})\text{\ and \textrm{Vol}}(L_{0})\leq V_{0},\\
(b).\text{ }L_{t}\in\mathcal{B}(\frac{1}{3}\kappa_{0},r_{0},\frac{1}{3}%
\delta_{0},6\Lambda_{0},2\epsilon_{0}^{\frac{1}{n+2}}),\text{ }(t\in
\lbrack0,T_{0}])
\end{array}
\]
Then we have\newline(1) The mean curvature vector satisfies
\[%
\begin{array}
[c]{c}%
\max_{L_{t}}|H|(t)\leq\epsilon_{0}^{\frac{1}{n+2}}e^{\frac{-\delta_{0}%
}{2(n+2)}t},\text{\ }t\in\lbrack\tau,T_{0}].
\end{array}
\]
(2) The second fundamental form
\[%
\begin{array}
[c]{c}%
\max_{L_{t}}|A|(t)\leq3\Lambda_{0},\text{ }t\in\lbrack0,T_{0}].
\end{array}
\]
(3) $L_{t}$ is $\frac{2}{3}\kappa_{0}$-noncollapsed on the scale $r_{0}$ for
$t\in\lbrack0,T_{0}]$.\newline(4) The first eigenvalue
\[%
\begin{array}
[c]{c}%
\lambda_{1}(t)\geq K+2+\frac{1}{2}\delta_{0},\text{ }t\in\lbrack0,T_{0}].
\end{array}
\]
Thus, the solution $L_{t}\in\mathcal{B}(\frac{2}{3}\kappa_{0},r_{0},\frac
{1}{2}\delta_{0},3\Lambda_{0},\epsilon_{0}^{\frac{1}{n+2}})$ for $t\in
\lbrack0,T_{0}]$, and by using Lemma \ref{L5.1}, we obtain that
\[%
\begin{array}
[c]{c}%
L_{t}\in\mathcal{B}(\frac{1}{3}\kappa_{0},r_{0},\frac{1}{3}\delta_{0}%
,6\Lambda_{0},2\epsilon_{0}^{\frac{1}{n+2}}),\text{ }t\in\lbrack0,T_{0}%
+\delta]
\end{array}
\]
for some\ $\delta=\delta(n,\Lambda_{0},K_{1})>0.$
\end{lemma}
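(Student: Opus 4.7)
The strategy closely mirrors Lemma \ref{16}, but the role of the negative $\eta$-Einstein constant is replaced by the spectral gap $\lambda_1 \geq K+2+\tfrac{1}{3}\delta_0$ coming from assumption (b); this is what will force the exponential decay of $|H|$, and a Lemma \ref{L3.2}-type argument will then propagate the gap forward in time.

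First I will handle part (1). Using the sharp form (\ref{43}) of Lemma \ref{11} with $\lambda_1(t)\geq K+2+\tfrac{1}{3}\delta_0$ on $[0,T_0]$ and with $|A|\leq 6\Lambda_0$, $|H|\leq 2\epsilon_0^{1/(n+2)}$, one gets
\[
\tfrac{d}{dt}\int_{L_t}|H|^2\,d\mu_t \leq -2\bigl(\tfrac{1}{3}\delta_0 - 12\Lambda_0\epsilon_0^{1/(n+2)}\bigr)\int_{L_t}|H|^2\,d\mu_t,
\]
so for $\epsilon_0$ small enough the right–hand side is bounded by $-\tfrac{\delta_0}{2}\int|H|^2\,d\mu_t$, giving $\int_{L_t}|H|^2\,d\mu_t\leq V_0\epsilon_0^2 e^{-\tfrac{\delta_0}{2}t}$. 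Combining with the $C^0$ bound $|\nabla A|\leq C_1(n,\Lambda_0,K_2,\tau)$ on $[\tau,T_0]$ from Theorem \ref{12} and the $\tfrac{1}{3}\kappa_0$-noncollapsing assumption, Lemma \ref{13} converts the $L^2$ decay of $|H|$ into the pointwise bound $\max_{L_t}|H|\leq \epsilon_0^{1/(n+2)}e^{-\delta_0 t/(2(n+2))}$, provided $\epsilon_0$ is chosen small enough that the prefactor is at most $1$.

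For part (2), I follow the Lemma \ref{16} template: higher derivative bounds from Theorem \ref{12} combined with integration by parts give $\int_{L_t}|\nabla^2 H|^2 d\mu_t \leq C_4 V_0 \epsilon_0^{1/(n+2)}e^{-\delta_0 t/(2(n+2))}$ (using that volume decreases along the flow), then Lemma \ref{13} yields a pointwise $|\nabla^2 H|$ bound with the same exponential factor. Inserting this into the pointwise inequality (\ref{65}) for $\partial_t|A|$ and integrating from $\tau$ to $t$, the exponentially decaying integrands are controlled by $(n+2)/\delta_0$ instead of $(n+2)/|K+2|$, and for $\epsilon_0$ small enough the resulting bound reads $|A|(t)\leq 2\Lambda_0+o(1)\leq 3\Lambda_0$. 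Part (3) is identical to the corresponding step in Lemma \ref{16}: estimate $E(t)=\int_0^t \max_{L_s}(|A||H|+|H|^2)\,ds$ using Lemma \ref{L5.1} on $[0,\tau]$ and the exponential decay from (1) on $[\tau,T_0]$ to get $E(t)\leq \tfrac{1}{n+1}\log\tfrac{3}{2}$, and then Lemma \ref{8} gives $\mathrm{Vol}(B_t(p,\rho))\geq \tfrac{2}{3}\kappa_0\rho^n$.

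The main new ingredient, and where I expect the real work, is part (4): propagating the spectral gap. Here I invoke Lemma \ref{L3.2} with $\Lambda=3\Lambda_0$, $\gamma=\tfrac{\delta_0}{2(n+2)}$ and $\epsilon=\epsilon_0^{1/(n+2)}$, using the bound on $|\nabla H|$ that comes from part (1) together with the Theorem \ref{12} higher derivative estimates (possibly via one more application of Lemma \ref{13} to $|\nabla H|$). The conclusion is
\[
\sqrt{\lambda_1(t)} \geq e^{-\frac{n+2}{\delta_0}(6\Lambda_0\epsilon_0^{1/(n+2)}+\epsilon_0^{2/(n+2)})}\sqrt{\lambda_1(0)} - \frac{6(n+2)\Lambda_0}{\delta_0}\epsilon_0^{1/(n+2)},
\]
and since $\lambda_1(0)\geq K+2+\delta_0$, choosing $\epsilon_0$ sufficiently small (depending on $\delta_0,\Lambda_0,K+2,n$) makes the first factor exceed $\sqrt{K+2+\tfrac{2}{3}\delta_0}$ and the subtracted term smaller than any preassigned constant, so that $\lambda_1(t)\geq K+2+\tfrac{1}{2}\delta_0$. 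The delicate point is that Lemma \ref{L3.2} requires both $|H|$ and $|\nabla H|$ to decay exponentially in $t$; the decay of $|\nabla H|$ is not automatic from part (1) alone and has to be extracted by interpolating between the pointwise bound on $|H|$ and the uniform bound on $|\nabla^2 H|$ obtained above, or by rerunning the $L^2$-to-$L^\infty$ Lemma \ref{13} argument applied to $|\nabla H|$. Once all four items are in hand, the promotion $\mathcal{B}(\tfrac{2}{3}\kappa_0,r_0,\tfrac{1}{2}\delta_0,3\Lambda_0,\epsilon_0^{1/(n+2)})\subset \mathcal{B}(\tfrac{1}{3}\kappa_0,r_0,\tfrac{1}{3}\delta_0,6\Lambda_0,2\epsilon_0^{1/(n+2)})$ over a short further interval $[T_0,T_0+\delta]$ follows directly from Lemma \ref{L5.1}.
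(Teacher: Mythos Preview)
Your proposal is correct and matches the approach the paper indicates: the paper itself does not spell out a proof but refers the reader to Lemma~5.2 in \cite{li}, whose argument is exactly the adaptation of Lemma~\ref{16} that you outline, with the spectral gap $\lambda_1\geq K+2+\tfrac{1}{3}\delta_0$ replacing the negativity of $K+2$ via (\ref{43}) and with the additional eigenvalue step (4) handled by Lemma~\ref{L3.2}. Your identification of the one delicate point---obtaining exponential decay of $|\nabla H|$ so that Lemma~\ref{L3.2} applies---is accurate, and the remedy you suggest (an integration-by-parts estimate $\int|\nabla H|^2\leq\int|H||\nabla^2 H|$ followed by Lemma~\ref{13}, in parallel with the $|\nabla^2 H|$ step) is the standard way to fill it.
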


\begin{proof}
The proof is similar to the proof of Lemma 5.2 in (\cite{li}), the reader can
refer to (\cite{li}).
\end{proof}

{\large Proof of Theorem \ref{Thm 3.2}:}

\begin{proof}
Suppose that $L_{0}\in\mathcal{B}(\kappa_{0},r_{0},\delta_{0},\Lambda
_{0},\epsilon_{0})$ for any positive constants $\kappa_{0},$ $r_{0},$
$\delta_{0},$ $\Lambda_{0}$ and $\epsilon_{0}$ small enough. Set%
\[%
\begin{array}
[c]{c}%
t_{0}=\sup\{t>0|\text{ }L_{s}\in\mathcal{B}(\frac{1}{3}\kappa_{0},r_{0}%
,\frac{1}{3}\delta_{0},6\Lambda_{0},2\epsilon_{0}^{\frac{1}{n+2}}),\text{
}s\in\lbrack0,t)\}.
\end{array}
\]
In the following, we will show that $t_{0}=\infty$. Suppose that $t_{0}%
<\infty$. By Lemma \ref{16a}, we know that%
\[%
\begin{array}
[c]{c}%
L_{t}\in\mathcal{B}(\frac{2}{3}\kappa_{0},r_{0},\frac{1}{2}\delta_{0}%
,3\Lambda_{0},\epsilon_{0}^{\frac{1}{n+2}}),
\end{array}
\]
for $t\in\lbrack0,t_{0})$ and $\epsilon_{0}=\epsilon_{0}(\kappa_{0}%
,r_{0},\delta_{0},\Lambda_{0},V_{0},K_{5},K+2)>0$. By using Lemma \ref{16a}
again, the solution $L_{t}$ can be extended to $[0,t_{0}+\delta]$ and $L_{t}$
is contained in $\mathcal{B}(\frac{1}{3}\kappa_{0},r_{0},\frac{1}{3}\delta
_{0},6\Lambda_{0},2\epsilon_{0}^{\frac{1}{n+2}})$ which contradicts the
definition of $t_{0}$. So we know that $t_{0}=\infty$ and%
\[%
\begin{array}
[c]{c}%
L_{t}\in\mathcal{B}(\frac{1}{3}\kappa_{0},r_{0},\frac{1}{3}\delta_{0}%
,6\Lambda_{0},2\epsilon_{0}^{\frac{1}{n+2}}),
\end{array}
\]
for $t\in\lbrack0,\infty).$ By using Lemma \ref{16a}, we obtain that the mean
curvature vector will decay exponentially to zero and the flow will converge
to a smooth minimal Legendrian submanifold. This completes this theorem.
\end{proof}

As a consequence of Theorem \ref{Thm 3.2} and Lemma \ref{L3.2}, we have the
Theorem \ref{Thm 3.1}.

\section{Deformation of minimal Legendrian submanifolds}

In this section, we study a Legendrian deformation of a minimal Legendrian
submanifold into an $\eta$-Einstein Sasakian manifold, and prove the
exponential decay of the mean curvature vector under the Legendrian mean
curvature flow with some special initial data.

In \cite{ono1}, H. Ono proved that a compact minimal Legendrian submanifold
$L$ into an $\eta$-Einstein Sasakian manifold with the $\eta$-Ricci constant
$K+2$ is Legendrian stable if and only if the first positive eigenvalue of the
Laplacian operator of $L$ satisfies $\lambda_{1}\geq K+2$. Thus, the
assumption (\ref{70}) of Theorem \ref{Thm 3.1} on the first eigenvalue of the
Laplacian operator ensures that the limit minimal Legendrian submanifold is
strictly Legendrian stable.

\begin{theorem}
\label{Thm 4.0}Let $(M,\lambda,g,\mathbf{T},J)$ be an $\eta$-Einstein Sasakian
manifold with the $\eta$-Einstein constant $K+2\geq0$. Suppose that
$\varphi:L\rightarrow M$ be a compact minimal Legendrian submanifold with the
first eigenvalue of the Laplacian operator $\lambda_{1}=K+2$ and $X $ is an
essential Legendrian variation of $L_{0}=\varphi(L)$. Let $\varphi
_{s}:L\rightarrow M,$ $s\in(-\delta,\delta),$ with $\varphi_{0}=\varphi$ be a
one-parameter family of Legendrian deformations generated by $X$. Then there
exists $\epsilon_{0}=\epsilon_{0}(X,L_{0},M)>0 $ such that if $L_{s}%
=\varphi_{s}(L)\subset M$ satisfying%
\[%
\begin{array}
[c]{c}%
\left\Vert \varphi_{s}-\varphi_{0}\right\Vert _{C^{3}}\leq\epsilon_{0}%
\end{array}
\]
then the Legendrian mean curvature flow with the initial Legendrian
submanifold $L_{s}$ will converge exponentially fast to a minimal Legendrian
submanifold in $M$.
\end{theorem}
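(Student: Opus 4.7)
The principal difficulty is that we are sitting exactly on the borderline $\lambda_{1}=K+2$, so the Poincar\'e-type estimate of Lemma~\ref{11} only yields
\[
\frac{d}{dt}\int_{L_{t}}|H|^{2}d\mu_{t}\leq 2\Lambda\epsilon\int_{L_{t}}|H|^{2}d\mu_{t},
\]
which gives no linear exponential decay. Moreover, the eigenvalue stability in Lemma~\ref{L3.2} cannot be bootstrapped as in Theorem~\ref{Thm 3.2} because the spectral margin $\delta_{0}$ used there degenerates. The plan is to carry out a finite-dimensional reduction: split the flow into a transverse component that contracts exponentially from the strict gap $\lambda_{2}(L_{0})>\lambda_{1}(L_{0})=K+2$, and an essential component that is absorbed into a finite-dimensional moduli space of nearby minimal Legendrian submanifolds generated by the essential Jacobi fields.

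My plan is as follows. Since $X$ is an essential Legendrian variation of $\overline{\varphi}(L_{0})$, its generating function $f$ (so that the Legendrian evolution direction is $\nabla f+2f\mathbf{T}$) lies in the first eigenspace $\mathcal{E}_{1}=\ker\bigl(\Delta+(K+2)\mathrm{Id}\bigr)$ on $L_{0}$. Following the analogue of the Kajigaya--Kunikawa argument in \cite{kk}, the first step is to establish that essential infinitesimal deformations integrate: there exists a finite-dimensional smooth moduli space $\mathcal{M}$ of minimal Legendrian submanifolds of $M$ near $\overline{\varphi}(L_{0})$, tangent to $\mathcal{E}_{1}$ at $\overline{\varphi}(L_{0})$, which can be realized by the Killing structure coming from isometries of $(M,\overline{g})$ commuting with the Reeb field $\mathbf{T}$ together with Sasaki automorphisms in the characteristic direction. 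Consequently, after a Legendrian gauge adjustment, the deformed immersion $\varphi_{s}$ can be written as a $C^{3}$-small normal graph over some element $L_{s}^{\mathcal{M}}\in\mathcal{M}$, and this adjustment costs at most $O(\epsilon_{0}^{2})$ in $|H|$ because the first-order essential component has been absorbed.

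Given $L_{s}$, I would then track the evolution of the Legendrian angle $\alpha$ under (\ref{27}), using $\frac{\partial}{\partial t}\alpha=\Delta\alpha+(K+2)\alpha$ from (\ref{angle}). Decomposing in $L^{2}(L_{t})$ as $\alpha=\alpha^{E}+\alpha^{\perp}$ via the spectral projections onto $\mathcal{E}_{1}(L_{t})$ and its orthogonal complement, the transverse component satisfies
\[
\frac{d}{dt}\int_{L_{t}}|\alpha^{\perp}|^{2}d\mu_{t}\leq -2\bigl(\lambda_{2}(L_{t})-(K+2)-C\Lambda\epsilon_{0}\bigr)\int_{L_{t}}|\alpha^{\perp}|^{2}d\mu_{t},
\]
which, thanks to the spectral gap $\lambda_{2}(L_{0})-(K+2)>0$ and continuity of eigenvalues along $C^{3}$-small Legendrian perturbations, yields exponential decay of $\int |\alpha^{\perp}|^{2}d\mu_{t}$. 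The essential component $\alpha^{E}$ is absorbed by the moduli space motion: writing $L_{t}=\mathrm{graph}(u_{t})$ over a moving point $L_{t}^{\mathcal{M}}\in\mathcal{M}$ and projecting the evolution onto $T_{L_{t}^{\mathcal{M}}}\mathcal{M}$, one obtains an ODE for the tangential motion whose speed is bounded by the $L^{2}$-norm of $|H|$, hence by the decaying transverse contribution. This produces a two-scale estimate showing $L_{t}$ converges exponentially to a minimal limit $L_{\infty}\in\mathcal{M}$. The bootstrap of higher derivatives, the noncollapsing, and the extension for all $t$ then proceed exactly as in the continuity argument for Theorem~\ref{Thm 3.2}, using Lemmas~\ref{9}, \ref{10} and Theorem~\ref{12}(1).

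The main obstacle is the integrability step, namely the construction and smoothness of the moduli space $\mathcal{M}$ and the verification that every essential Jacobi field on $\overline{\varphi}(L_{0})$ tangentially exponentiates into a curve of minimal Legendrian immersions, not merely an infinitesimal deformation. This is what distinguishes the borderline case $\lambda_{1}=K+2$ from the strictly stable case of Theorem~\ref{Thm 3.1}, and the proof will rely on writing essential variations as restrictions of ambient Killing vector fields compatible with the Sasakian structure. A secondary technical point is controlling the change of spectral projections along the flow; this requires a perturbation argument analogous to Lemma~\ref{L3.2} but refined to the projection onto $\mathcal{E}_{1}(L_{t})$ rather than the full first-eigenvalue shift.
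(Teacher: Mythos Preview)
Your proposal rests on a misreading of the definition of an \emph{essential} Legendrian variation. In the paper, $X=J\nabla f_{0}+2f_{0}\mathbf{T}$ is called essential precisely when $f_{0}$ is \emph{not} a first eigenfunction of $\Delta_{0}$; equivalently (Lemma~\ref{L40}), when the second variation of volume along $X$ is strictly positive. You have assumed the opposite, namely that $f_{0}\in\mathcal{E}_{1}=\ker(\Delta_{0}+(K+2)\mathrm{Id})$, and built your entire strategy around absorbing a null Jacobi direction into a moduli space of nearby minimal Legendrians. That moduli space is neither needed nor, in general, available: nothing in the hypotheses guarantees that first eigenfunctions integrate to genuine families of minimal Legendrian submanifolds.

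The paper's argument goes in the other direction. Because $f_{0}\notin E_{\lambda_{1}}$, the quantity
\[
\frac{\partial\alpha_{s,t}}{\partial s}\Big|_{(0,0)}=-\Delta_{0}f_{0}-(K+2)f_{0}
\]
is nonzero and orthogonal to $E_{\lambda_{1}}$, and since $\frac{\partial\alpha_{s,t}}{\partial s}\big|_{s=0}$ satisfies the linear equation $\partial_{t}u=\Delta_{0}u+(K+2)u$ on the fixed minimal $L_{0}$, this orthogonality persists for all $t$. Hence $\frac{\partial\alpha_{s,t}}{\partial s}\big|_{s=0}$ obeys the Poincar\'e inequality with the strictly larger constant $\lambda_{2}>K+2$. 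A contradiction/compactness argument (Lemma~\ref{lemma 4.0}) upgrades this to the nonlinear statement
\[
\int_{L_{s,t}}|\Delta\alpha_{s,t}|^{2}d\mu_{s,t}\geq(K+2+\delta_{0})\int_{L_{s,t}}|\nabla\alpha_{s,t}|^{2}d\mu_{s,t}
\]
for all sufficiently small $s$, which feeds directly into (\ref{44}) to give exponential decay of $\int|H|^{2}$. After that the proof is the same continuity scheme as in Lemma~\ref{16} and Theorem~\ref{22}. In short, the borderline $\lambda_{1}=K+2$ is handled not by moving along $\mathcal{E}_{1}$, but by observing that the specific Legendrian angle produced by an essential deformation never sees $\mathcal{E}_{1}$ to leading order.
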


Note that we can show that a minimal Legendrian submanifold $L$ with
$\lambda_{1}=K+2$ is strictly Legendrian stable along an essential Legendrian
variation $X$ as in Lemma \ref{L40}. Theorem \ref{Thm 4.0} says that the flow
will exist for all time and converge if the initial Legendrian submanifold is
a small perturbation of $L$ along essential Legendrian variations.

\begin{definition}
\label{d2} Let $L\subset M$ be a Legendrian submanifold and $X$ be a vector
field along $L$. $X$ is called a Legendrian variation vector field if
\[%
\begin{array}
[c]{c}%
X=J\nabla f+2f\mathbf{T},
\end{array}
\]
for some smooth function $f$ on $L$. A smooth family $\varphi_{s}$ of
immersions of $L$ into $M$ is called a Legendrian deformation if its
derivative $X=\frac{\partial\varphi_{s}}{\partial s}(L)$ is Legendrian
variation for each $s$.
\end{definition}

In the following, we assume that $\varphi_{0}:L\rightarrow M$ is a smooth
minimal Legendrian submanifold into an $\eta$-Einstein Sasakian manifold
$(M^{2n+1},J,\mathbf{T})$ with the $\eta$-Ricci constant $K+2$, and $X=J\nabla
f_{0}+2f_{0}\mathbf{T}$ is a Legendrian variation of $L_{0}=\varphi_{0}(L)$
for some smooth function $f_{0}$ on $L$. We extend the vector $X$ into a
neighborhood of $L_{0}$ in $M$ such that it is still Legendrian variation. Let
$\varphi_{s}:L\rightarrow M,$ $s\in(-\delta,\delta),$ be a family of
Legendrian deformations generated by $X$ and we write $L_{s}=\varphi_{s}%
(L_{0})$. For such a Legendrian deformation $L_{s}$, we can find smooth
functions $f_{s}$ such that%
\begin{equation}%
\begin{array}
[c]{c}%
\frac{\partial}{\partial s}L_{s}=J\nabla f_{s}+2f_{s}\mathbf{T},
\end{array}
\label{401}%
\end{equation}
then the Legendrian angle $\alpha_{s}$ of $L_{s}$ satisfies%
\begin{equation}%
\begin{array}
[c]{c}%
\frac{\partial}{\partial s}\alpha_{s}=\Delta_{s}\alpha_{s}+(K+2)\alpha_{s}.
\end{array}
\label{402}%
\end{equation}

Recall that a minimal Legendrian submanifold is called Legendrian stable
(resp. strictly stable), if for any Legendrian variation $X$ the second
variation along $X$ of the volume functional is nonnegative (resp. positive).
When $M$ is a $\eta$-Einstein Sasakian manifold with the $\eta$-Ricci constant
$K+2$, Ono \cite{ono1} proved that a compact minimal Legendrian submanifold
$L$ is Legendrian stable if and only if the first eigenvalue of the Laplacian
operator of $L$ satisfies $\lambda_{1}\geq K+2.$

\begin{definition}
A nonzero vector field $X$ is called an essential Legendrian variation of a
Legendrian submanifold $L_{0}$ if $X$ can be written as $X=J\nabla
f+2f\mathbf{T}$ for $f$ is not a first eigenfunction of the Laplacian operator
$\Delta$ on $L$.
\end{definition}

For an essential Legendrian vector $X$ on a minimal Legendrian submanifold
$L_{0}$, we can show that $L_{0}$ is strictly Legendrian stable along the
Legendrian variation $X$ in the following sense:

\begin{lemma}
\label{L40}Let $\varphi_{s}:L\rightarrow M$ be a Legendrian deformation of a
minimal Legendrian submanifold $L_{0}=\varphi_{0}(L)$ with the first
eigenvalue of the Laplacian operator $\Delta_{0}$ on $L_{0}$ has $\lambda
_{1}=K+2$ and $X=\frac{\partial\varphi}{\partial s}|_{s=0}$. Then $X$ is an
essential Legendrian variation on $L_{0}$ if and only if%
\[%
\begin{array}
[c]{c}%
\frac{d^{2}}{ds^{2}}\mathrm{Vol}(L_{s})|_{s=0}>0.
\end{array}
\]

\end{lemma}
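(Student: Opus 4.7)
The plan is to reduce Lemma \ref{L40} to a spectral inequality via the second-variation formula at a minimal Legendrian submanifold. First I would compute the first variation of $\mathrm{Vol}(L_s)$ along $\partial_s\varphi_s = J\nabla f_s + 2f_s\mathbf{T}$. Combining (\ref{2}), which governs $\partial_s g_{ij}$ under the Legendrian deformation (\ref{1}), with $H_s = d\alpha_s$ and integrating by parts on the compact $L_s$ yields an expression of the form
\[
\frac{d}{ds}\mathrm{Vol}(L_s) \;=\; c\int_{L_s}\langle \nabla f_s, \nabla\alpha_s\rangle\, d\mu_s \;=\; -c\int_{L_s} f_s\,\Delta_s\alpha_s\, d\mu_s
\]
for a nonzero universal constant $c$. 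Since $L_0$ is minimal we have $H_0 = d\alpha_0 \equiv 0$, so $\alpha_0$ is constant on each component; in particular $\nabla\alpha_0 = 0$ and $\Delta_0\alpha_0 = 0$, and the first variation vanishes.

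For the second variation I would use (\ref{24}), which under the identification of (\ref{1}) with the present deformation reads $\partial_s\alpha_s = -c_1(\Delta_s f_s + (K+2)f_s)$ for a nonzero constant $c_1$. Differentiating once more, every term containing $\Delta_0\alpha_0$, $\nabla\alpha_0$, or a derivation acting on the constant $\alpha_0$ dies, leaving
\[
\frac{d^{2}}{ds^{2}}\mathrm{Vol}(L_s)\Big|_{s=0} \;=\; -c\int_{L_0} f_0\,\Delta_0\Big(\partial_s\alpha_s\big|_{s=0}\Big)\, d\mu_0,
\]
and two integrations by parts then deliver, for a positive constant $C$,
\[
\frac{d^{2}}{ds^{2}}\mathrm{Vol}(L_s)\Big|_{s=0} \;=\; C\int_{L_0}\big[(\Delta_0 f_0)^{2} - (K+2)|\nabla f_0|^{2}\big]\, d\mu_0.
\]
This is the Jacobi quadratic form for minimal Legendrian submanifolds in an $\eta$-Einstein Sasakian manifold, matching Ono's second-variation formula.

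The final step is purely spectral. Expanding $f_0 = \sum_{k\ge 0} a_k\phi_k$ in an $L^{2}$-orthonormal eigenbasis of $\Delta_0$, with $\Delta_0\phi_k = -\lambda_k\phi_k$ and $0 = \lambda_0 < \lambda_1 \le \lambda_2 \le \cdots$, Parseval gives $\int(\Delta_0 f_0)^{2}\,d\mu_0 = \sum_{k\ge 1}a_k^{2}\lambda_k^{2}$ and $\int|\nabla f_0|^{2}\,d\mu_0 = \sum_{k\ge 1}a_k^{2}\lambda_k$, whence
\[
\frac{d^{2}}{ds^{2}}\mathrm{Vol}(L_s)\Big|_{s=0} \;=\; C\sum_{k\ge 1} a_k^{2}\,\lambda_k\big(\lambda_k-(K+2)\big).
\]
Under the hypothesis $\lambda_1 = K+2$ every summand is nonnegative and vanishes precisely when either $a_k = 0$ or $\lambda_k = \lambda_1$; the sum is strictly positive exactly when some $a_k$ with $\lambda_k > \lambda_1$ is nonzero, i.e., when $f_0$ has a nontrivial component outside the $\lambda_1$-eigenspace (the constant contribution corresponding to a trivial Reeb translation, which is generated by a Killing field and preserves volume identically). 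This is exactly the condition that $X$ be an essential Legendrian variation, giving both directions of the iff.

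The main obstacle is bookkeeping rather than conceptual: pinning down signs and normalizations across the chain of identifications (the relation between the present $f_s$ and the paper's $f$ in (\ref{1}), the factor in the Legendre-preservation condition $df = 2\theta$, and the sign in $H = d\alpha$) so that the constant $C$ emerges with the correct positive sign. Once these are fixed, the Jacobi form follows mechanically from Lemma \ref{4} and (\ref{24}), and the spectral step completes the argument.
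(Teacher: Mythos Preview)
Your proposal is correct and follows essentially the same route as the paper: compute the first variation from (\ref{2}) and $H=d\alpha$, differentiate using (\ref{24}) at the minimal $L_{0}$, and finish with an eigenfunction expansion of $f_{0}$ to obtain $\sum a_{k}^{2}\lambda_{k}(\lambda_{k}-(K+2))$. The only cosmetic differences are that the paper integrates by parts in the first variation onto $\Delta_{s}f_{s}$ rather than $\Delta_{s}\alpha_{s}$, and writes the second variation as $\int_{L_{0}}(\Delta_{0}f_{0}+(K+2)f_{0})\Delta_{0}f_{0}\,d\mu_{0}$, which is one integration by parts away from your Jacobi form $\int[(\Delta_{0}f_{0})^{2}-(K+2)|\nabla f_{0}|^{2}]\,d\mu_{0}$; your explicit handling of the constant mode is in fact slightly more careful than the paper's.
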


\begin{proof}
Let $X_{s}=\frac{\partial\varphi_{s}}{\partial s}$ and since $L_{s}$ is a
Legendrian deformation, we can find smooth functions $f_{s}$ on $L_{s}$ such
that $X_{s}=J\nabla f_{s}+2f_{s}\mathbf{T}.$ From (\ref{2}) we have
$\frac{\partial}{\partial s}g_{ij}=2\nabla^{k}f_{s}h_{kij}$ and $H_{k}%
=\nabla_{k}\alpha_{s},$ we compute%
\[%
\begin{array}
[c]{c}%
\frac{d}{ds}\mathrm{Vol}(L_{s})=\int_{L_{s}}\frac{1}{2}g^{ij}\frac{\partial
g_{ij}}{\partial s}d\mu_{s}=\int_{L_{s}}\nabla^{k}f_{s}\nabla_{k}\alpha
_{s}d\mu_{s}=-\int_{L_{s}}\alpha_{s}\Delta_{s}f_{s}d\mu_{s}.
\end{array}
\]
Then the second variation of the volume is%
\begin{equation}%
\begin{array}
[c]{c}%
\frac{d^{2}}{ds^{2}}\mathrm{Vol}(L_{s})|_{s=0}=\int_{L_{0}}(\Delta_{0}%
f_{0}+(K+2)f_{0})\Delta_{0}f_{0}d\mu_{0}%
\end{array}
\label{403}%
\end{equation}
where we used the equality (\ref{402}) and the fact that $L_{0}$ is minimal
Legendrian. By the eigenvalue decomposition, we can assume that%
\[%
\begin{array}
[c]{c}%
f_{0}=\sum_{i=1}^{\infty}a_{i}\phi_{i},
\end{array}
\]
where the eigenfunctions $\phi_{i}$ satisfies%
\[%
\begin{array}
[c]{c}%
-\Delta_{0}\phi_{i}=\lambda_{i}\phi_{i},\text{ }\int_{L_{0}}\phi_{i}^{2}%
d\mu_{0}=1
\end{array}
\]
for the corresponding eigenvalues $0<\lambda_{1}<\lambda_{2}<\cdots$. Thus,
(\ref{403}) can be written as%
\[%
\begin{array}
[c]{c}%
\frac{d^{2}}{ds^{2}}\mathrm{Vol}(L_{s})|_{s=0}=\sum_{i=1}^{\infty}a_{i}%
^{2}\lambda_{i}(\lambda_{i}-(K+2))\geq0,
\end{array}
\]
and the equality holds if and only if $a_{i}=0$ for all $i\geq2$ and thus
$f_{0}$ is the first eigenfunction of the Laplacian operator $\Delta_{0}$
corresponding to the first eigenvalue $\lambda_{1}.$
\end{proof}

We denote by $L_{s,t}=\varphi_{s,t}(L_{0})$, $t\in\lbrack0,T_{0}],$ the
Legendrian mean curvature flow with the initial data $L_{s}$. Since $L_{s}$ is
a Legendrian deformation of $L_{0}$, the mean curvature form of $L_{s}$ is
exact for each $s$. Thus, the mean curvature form of $L_{s,t}$ is also exact,
and denote the Legendrian angle by $\alpha_{s,t}$. Suppose that the
deformation $L_{s}$ is sufficiently close to $L_{0}$ in the following sense%
\begin{equation}%
\begin{array}
[c]{c}%
\left\Vert \varphi_{s}-\varphi_{0}\right\Vert _{C^{3}}\leq\epsilon_{0}%
\end{array}
\label{404}%
\end{equation}
for some sufficient small $\epsilon_{0}$. The following Lemma show that
$\alpha_{s,t}$ satisfies certain inequality if $L_{s}$ is sufficiently close
to $L_{0}$.

\begin{lemma}
\label{lemma 4.0}Let $X=J\nabla f_{0}+2f_{0}\mathbf{T}$ be an essential
Legendrian variation of $L_{0}$, where $L_{0}$ is a minimal Legendrian
submanifold with the first eigenvalue $\lambda_{1}=K+2$. For any $\Lambda>0 $,
there exists $\epsilon_{0}=\epsilon_{0}(L_{0},X,M)>0$ and $\delta_{0}>0 $ such
that if $L_{s,t}$ satisfies%
\begin{equation}%
\begin{array}
[c]{c}%
|A_{s}|(t)\leq\Lambda,\text{\ }|H_{s}|(t)\leq\epsilon_{0},\text{ }%
\mathrm{for}\text{ }t\in\lbrack0,T_{0}]
\end{array}
\label{405}%
\end{equation}
then the Legendrian angle $\alpha_{s,t}$ of $L_{s,t}$ satisfies
\begin{equation}%
\begin{array}
[c]{c}%
\int_{L_{s,t}}\left\vert \Delta\alpha_{s,t}\right\vert ^{2}d\mu_{_{s,t}}%
\geq(K+2+\delta_{0})\int_{L_{s,t}}\left\vert \nabla\alpha_{s,t}\right\vert
^{2}d\mu_{_{s,t}},\text{ }t\in\lbrack0,T_{0}].
\end{array}
\label{406}%
\end{equation}
Therefore, we have%
\begin{equation}%
\begin{array}
[c]{c}%
\frac{\partial}{\partial t}\int_{L_{s,t}}\left\vert H_{s,t}\right\vert
^{2}d\mu_{_{s,t}}\leq-2(\delta_{0}-\Lambda\epsilon_{0})\int_{L_{s,t}%
}\left\vert H_{s,t}\right\vert ^{2}d\mu_{_{s,t}},\text{\ }t\in\lbrack0,T_{0}].
\end{array}
\label{407}%
\end{equation}

\end{lemma}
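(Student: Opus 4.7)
The plan is to first establish the spectral inequality (\ref{406}) and then derive the differential inequality (\ref{407}) by substituting into the evolution identity (\ref{44}) from the proof of Lemma \ref{11}. The key point is that the essentiality of $X$, combined with $\lambda_{1}(L_{0})=K+2$, will force the Legendrian angle $\alpha_{s,t}$ to have very small projection onto the first eigenspace, so the Rayleigh quotient $\int|\Delta\alpha|^{2}/\int|\nabla\alpha|^{2}$ is pushed up toward $\lambda_{2}(L_{0})>K+2$ rather than pinned at $\lambda_{1}$.

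First, the $C^{3}$-closeness $\Vert\varphi_{s}-\varphi_{0}\Vert_{C^{3}}\leq\epsilon_{0}$ together with $|A_{s}|\leq\Lambda$ and $|H_{s}|\leq\epsilon_{0}$ on $[0,T_{0}]$ ensures that the induced metric $g(s,t)$ stays $C^{2}$-close to $g(0,0)$; eigenvalue perturbation then supplies a constant $\delta_{1}=\delta_{1}(L_{0})>0$ with $\lambda_{2}(L_{s,t})\geq K+2+2\delta_{1}$ and $|\lambda_{1}(L_{s,t})-(K+2)|\leq\delta_{1}/2$ for all $t\in[0,T_{0}]$. Next, expand $\alpha_{s,0}$ in $s$: applying (\ref{21}) along $\varphi_{s}$ with $\alpha_{0,0}\equiv 0$ yields $\alpha_{s,0}=-s\bigl(\Delta_{0}+(K+2)\bigr)f_{0}+O(s^{2})$. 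For any first eigenfunction $\phi_{1}$ of $\Delta_{0}$ we have $(\Delta_{0}+(K+2))\phi_{1}=0$, hence integration by parts gives
\[
\int_{L_{0}}\alpha_{s,0}\,\phi_{1}\,d\mu_{0}=-s\int_{L_{0}}f_{0}\bigl(\Delta_{0}+(K+2)\bigr)\phi_{1}\,d\mu_{0}+O(s^{2})=O(s^{2}),
\]
whereas essentiality of $X$ forces $(\Delta_{0}+(K+2))f_{0}\not\equiv 0$, so $\Vert\alpha_{s,0}\Vert_{L^{2}}\geq cs$. Hence the first-eigenspace Fourier coefficient $a_{1}(s,0)$ satisfies $|a_{1}(s,0)|\leq C\epsilon_{0}\Vert\alpha_{s,0}\Vert_{L^{2}}$. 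To propagate this to $t\in(0,T_{0}]$, I use $\partial_{t}\alpha=\Delta\alpha+(K+2)\alpha$ together with $\partial_{t}g_{ij}=-2H^{k}h_{kij}$, the latter bounded by $2\Lambda\epsilon_{0}$. Differentiating $a_{1}(s,t)=\int\alpha_{s,t}\phi_{1}^{s,t}d\mu_{s,t}/\Vert\phi_{1}^{s,t}\Vert^{2}$ along a smoothly chosen first eigenfunction $\phi_{1}^{s,t}$ of $L_{s,t}$ and using $|K+2-\lambda_{1}(L_{s,t})|=O(\epsilon_{0})$, a Gronwall argument yields $|a_{1}(s,t)|\leq C^{\prime}\epsilon_{0}\Vert\alpha_{s,t}\Vert_{L^{2}}$ throughout $[0,T_{0}]$.

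Writing $\alpha_{s,t}=\beta+\gamma$ with $\beta$ the first-eigenspace component and $\gamma$ its orthogonal complement, the previous bound gives $\Vert\beta\Vert^{2}\leq c\epsilon_{0}^{2}\Vert\gamma\Vert^{2}$ for $\epsilon_{0}$ small, and with $\delta_{0}:=\delta_{1}$ a direct spectral computation produces
\[
\int|\Delta\alpha|^{2}-(K+2+\delta_{0})\int|\nabla\alpha|^{2}\geq-\lambda_{1}\delta_{0}\Vert\beta\Vert^{2}+\lambda_{2}(\lambda_{2}-\lambda_{1}-\delta_{0})\Vert\gamma\Vert^{2}\geq 0,
\]
which is (\ref{406}). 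Substituting into the identity (\ref{44}), bounding $|2H_{i}H_{j}H_{k}h^{kij}|\leq 2\Lambda\epsilon_{0}|H|^{2}$ and discarding the nonpositive $-|H|^{4}$ term, one obtains
\[
\frac{d}{dt}\int_{L_{s,t}}|H|^{2}d\mu\leq-2(K+2+\delta_{0})\int|H|^{2}+2(K+2)\int|H|^{2}+2\Lambda\epsilon_{0}\int|H|^{2}=-2(\delta_{0}-\Lambda\epsilon_{0})\int|H|^{2},
\]
which is (\ref{407}). The main obstacle is the propagation step: since $\lambda_{1}-(K+2)$ vanishes at the reference point, the dominant driving term for $a_{1}(s,t)$ cancels, and the growth of $a_{1}$ must instead be controlled by the $O(\Lambda\epsilon_{0})$ coupling to higher modes induced by the evolving metric. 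Making this uniform in $T_{0}$ requires a careful choice of a smoothly varying $\phi_{1}^{s,t}$ and delicate bookkeeping of how $\partial_{t}g$ mixes modes, since any persistent leakage into the first eigenspace would cause the Rayleigh estimate to degrade from $\lambda_{2}$ back down to $\lambda_{1}$.
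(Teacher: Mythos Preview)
Your derivation of (\ref{407}) from (\ref{406}) via the identity (\ref{44}) is correct and coincides with the paper's. The difficulty is entirely in (\ref{406}), and there your propagation step is not just delicate but actually fails as stated.

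The claim $|a_{1}(s,t)|\leq C'\epsilon_{0}\Vert\alpha_{s,t}\Vert_{L^{2}}$ cannot hold uniformly in $t$. Already for the linear model $\partial_{t}u=\Delta_{0}u+(K+2)u$ on the fixed $L_{0}$ (to which the true evolution of $\alpha_{s,t}$ is $O(\epsilon_{0})$-close), the first mode is stationary because $\lambda_{1}=K+2$, while every higher mode decays like $e^{-(\lambda_{j}-(K+2))t}$. Since your own computation gives $a_{1}(s,0)=O(s^{2})$ and $\Vert\gamma(0)\Vert\sim s$, the ratio $|a_{1}|/\Vert\gamma\Vert$ starts at $O(s)$ but grows like $e^{(\lambda_{2}-(K+2))t}$ and becomes $O(1)$ once $t\gtrsim(\lambda_{2}-(K+2))^{-1}\log(1/s)$. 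At that point $\alpha$ is dominated by its first-eigenfunction component, the Rayleigh quotient $\int|\Delta\alpha|^{2}/\int|\nabla\alpha|^{2}$ collapses to $\lambda_{1}=K+2$, and (\ref{406}) is violated. The $O(\Lambda\epsilon_{0})$ mode-mixing from $\partial_{t}g$ that you worry about is a lower-order effect; the real enemy is that the $O(s^{2})$ first-mode component, though initially tiny, does not decay at all. A Gronwall inequality cannot fix this, because the forcing is not what causes the failure---the unperturbed linear flow already drives the ratio to $1$.

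The paper avoids this by a contradiction/compactness argument rather than a direct estimate. One assumes (\ref{406}) fails along sequences $s_{i}\to 0$, $\delta_{i}\to 0$, $t_{i}\in[0,T_{0}]$; the flows $L_{s_{i},t}$ then converge to the stationary flow $L_{0}$, and after dividing $\alpha_{s_{i},t_{i}}$ by $s_{i}$ and passing to the limit one is left with the \emph{linearized} quantity $u(t):=\partial_{s}\alpha_{s,t}\big|_{s=0}$ on $L_{0}$. The crucial gain is that at this linearized level the first-mode component is \emph{exactly} zero: $u(0)=-(\Delta_{0}+(K+2))f_{0}\perp E_{\lambda_{1}}$, and $u$ satisfies $\partial_{t}u=\Delta_{0}u+(K+2)u$, under which orthogonality to $E_{\lambda_{1}}$ is preserved for all $t$. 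Essentiality of $X$ gives $u(0)\neq 0$, hence $u(t)\neq 0$ for all $t$, so its Rayleigh quotient is $\geq\lambda_{2}>K+2$ at every $t_{i}$, contradicting the limit of (\ref{409}). In short, dividing by $s_{i}$ before taking limits annihilates precisely the $O(s^{2})$ first-mode contamination that wrecks your direct argument; that is the step your proposal is missing.
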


\begin{proof}
Suppose that (\ref{406}) doesn't hold, there exist some constants
$s_{i}\rightarrow0$, $\delta_{i}\rightarrow0$ and $t_{i}\in\lbrack0,T_{0}]$
such that
\begin{equation}%
\begin{array}
[c]{c}%
|A_{s_{i}}|(t)\leq\Lambda,\text{\ }|H_{s_{i}}|(t)\leq\epsilon_{0},\text{
}\mathrm{for}\text{ }t\in\lbrack0,T_{0}]
\end{array}
\label{408}%
\end{equation}
and%
\begin{equation}%
\begin{array}
[c]{c}%
\int_{L_{s_{i},t_{i}}}\left\vert \Delta\alpha_{s_{i},t_{i}}\right\vert
^{2}d\mu_{s_{i},t_{i}}\leq(K+2+\delta_{i})\int_{L_{s_{i},t_{i}}}\left\vert
\nabla\alpha_{s_{i},t_{i}}\right\vert ^{2}d\mu_{_{s_{i},t_{i}}}.
\end{array}
\label{409}%
\end{equation}
By (\ref{408}), a sequence of the Legendrian mean curvature flow $L_{s_{i},t}
$, $t\in(0,T_{0})$, will converge to a limit Legendrian mean curvature flow
$L_{\infty,t}$ smoothly for $t\in(0,T_{0})$. Since the initial submanifolds
$L_{s_{i}}$ satisfies (\ref{404}), the limit flow has $L_{\infty
}(t)\rightarrow L_{0}$ in $C^{2,\alpha}$ as $t$ goes to zero. By (\ref{408})
again the mean curvature of $L_{\infty}(t)$, $t\in(0,T_{0})$, are identically
zero and by the uniqueness of mean curvature flow we have%
\[
L_{s_{i},t}\rightarrow L_{\infty,t}=L_{0},\text{\ }t\in\lbrack0,T_{0}].
\]

From (\ref{409}), we can take $s_{i}\rightarrow0$ and $\delta_{i}\rightarrow0$
to yield%
\[%
\begin{array}
[c]{c}%
\int_{L_{s_{i},t_{i}}}|\Delta\frac{1}{s_{i}}\alpha_{s_{i},t_{i}}|^{2}%
d\mu_{s_{i},t_{i}}\leq(K+2+\delta_{i})\int_{L_{s_{i},t_{i}}}|\nabla\frac
{1}{s_{i}}\alpha_{s_{i},t_{i}}|^{2}d\mu_{_{s_{i},t_{i}}},
\end{array}
\]
since $L_{0}$ is minimal, we have
\begin{equation}%
\begin{array}
[c]{c}%
\int_{L_{0}}|\Delta\frac{\partial\alpha_{s,t}}{\partial s}|_{(0,t_{i})}%
|^{2}d\mu_{0}\leq(K+2)\int_{L_{0}}|\nabla\frac{\partial\alpha_{s,t}}{\partial
s}|_{(0,t_{i})}|^{2}d\mu_{0}.
\end{array}
\label{410}%
\end{equation}

On the other hand, from (\ref{angle}) the Legendrian angle $\alpha_{s,t}$
satisfies
\[%
\begin{array}
[c]{c}%
\frac{\partial}{\partial t}\alpha_{s,t}=\Delta_{s,t}\alpha_{s,t}%
+(K+2)\alpha_{s,t}.
\end{array}
\]
Since $L_{0}$ is minimal, we can take $s_{i}\rightarrow0$ to get
\begin{equation}%
\begin{array}
[c]{c}%
\frac{\partial}{\partial t}\frac{\partial\alpha_{s,t}}{\partial s}%
|_{s=0}=\Delta_{0}\frac{\partial\alpha_{s,t}}{\partial s}|_{s=0}%
+(K+2)\frac{\partial\alpha_{s,t}}{\partial s}|_{s=0}.
\end{array}
\label{411}%
\end{equation}
By the eigenvalue decomposition as in Lemma \ref{L40}, we have
\[%
\begin{array}
[c]{c}%
\frac{\partial\alpha_{s,t}}{\partial s}|_{(s,t)=(0,0)}=-\Delta_{0}%
f_{0}-(K+2)f_{0}\text{ }\bot\text{ }E_{\lambda_{1}},
\end{array}
\]
where $E_{\lambda_{1}}$is the first eigenspace of the Laplacian operator
$\Delta_{0}$ on $L_{0}.$ For any function $\phi\in E_{\lambda_{1}}$ by
(\ref{411}) we have%
\[%
\begin{array}
[c]{ccc}%
\frac{\partial}{\partial t}\int_{L_{0}}\phi\frac{\partial\alpha_{s,t}%
}{\partial s}|_{s=0}d\mu_{0} & = & \int_{L_{0}}\phi\lbrack\Delta_{0}%
\frac{\partial\alpha_{s,t}}{\partial s}|_{s=0}+(K+2)\frac{\partial\alpha
_{s,t}}{\partial s}|_{s=0}]d\mu_{0}\\
& = & \int_{L_{0}}\left(  \Delta_{0}\phi+(K+2)\phi\right)  \frac
{\partial\alpha_{s,t}}{\partial s}|_{s=0}d\mu_{0}=0,
\end{array}
\]
which show that
\[%
\begin{array}
[c]{c}%
\frac{\partial}{\partial s}\alpha_{s,t}|_{s=0}\text{ }\bot\text{ }%
E_{\lambda_{1}},\text{\ \textrm{for}\ }t\in\lbrack0,T_{0}].
\end{array}
\]

Since $X$ is an essential Legendrian variation, we get
\[%
\begin{array}
[c]{c}%
\frac{\partial\alpha_{s,t}}{\partial s}|_{(s,t)=(0,0)}=-\Delta_{0}%
f_{0}-(K+2)f_{0}%
\end{array}
\]
is nonzero. Thus, by (\ref{411}) we can see that $\frac{\partial\alpha_{s,t}%
}{\partial s}|_{s=0}$ is nonzero and orthogonal to $E_{\lambda_{1}}$ for all
$t\in\lbrack0,T_{0}]$, which implies
\[%
\begin{array}
[c]{c}%
\int_{L_{0}}|\Delta\frac{\partial\alpha_{s,t}}{\partial s}|_{s=0}|^{2}d\mu
_{0}\geq\lambda_{2}\int_{L_{0}}|\nabla\frac{\partial\alpha_{s,t}}{\partial
s}|_{s=0}|^{2}d\mu_{0},
\end{array}
\]
where the second eigenvalue $\lambda_{2}$ is big than $\lambda_{1}=K+2$. This
is contradicts (\ref{410}), and (\ref{406}) is proved.

Now recall that by (\ref{44}) in the proof of Lemma \ref{11} with
$H_{s,t}=\nabla\alpha_{s,t},$ we have
\[%
\begin{array}
[c]{lll}%
\frac{1}{2}\frac{\partial}{\partial t}\int_{L_{s,t}}\left\vert H_{s,t}%
\right\vert ^{2}d\mu_{_{s,t}} & \leq & \int_{L_{s,t}}[-(\Delta\alpha
_{s,t})^{2}+(K+2+\Lambda\epsilon_{0})\left\vert H_{s,t}\right\vert ^{2}%
]d\mu_{_{s,t}}\\
& = & \int_{L_{s,t}}[-(\Delta\alpha_{s,t})^{2}+(K+2+\Lambda\epsilon
_{0})\left\vert \nabla\alpha_{s,t}\right\vert ^{2}]d\mu_{_{s,t}}\\
& \leq & -(\delta_{0}-\Lambda\epsilon_{0})\int_{L_{s,t}}\left\vert
H_{s,t}\right\vert ^{2}d\mu_{_{s,t}},
\end{array}
\]
which is the inequality (\ref{407}).
\end{proof}

In the following we prove Theorem \ref{Thm 4.0} by using the same argument as
in the proof of Theorem \ref{23}. Since $L_{0}$ is a smooth minimal Legendrian
submanifold, we can find $\kappa_{0},$ $r_{0}>0$ such that $L_{0} $ is
$2\kappa_{0}$-noncollapsed on the scale $r_{0}$. Thus we can choose
$\epsilon_{0}$ small enough such that $L_{s}$ is $\kappa_{0}$-noncollapsed on
the scale $r_{0}$ and $L_{s}\in\mathcal{A}(\kappa_{0},r_{0},\Lambda
_{0},\epsilon_{0})$ for some constant $\Lambda_{0}>0$, where $\mathcal{A}%
(\kappa,r,\Lambda,\epsilon)$ is the subspace of Legendrian submanifolds in $M$
defined as (\ref{LS}).

Consider the solution $L_{s,t}$ of the Legendrian mean curvature flow with the
initial data $L_{s}$. Using the Lemma \ref{8} and Lemma \ref{9}, we can get
the following result.

\begin{lemma}
\label{lemma 4.1} If the initial Legendrian submanifold $L_{s}\in
\mathcal{A}(\kappa,r,\Lambda,\epsilon)$, then there exists $\tau
=\tau(n,\Lambda,K_{1})$ such that $L_{s,t}\in\mathcal{A}(\frac{1}{2}%
\kappa,r,2\Lambda,2\epsilon)$ for $t\in\lbrack0,\tau]$.
\end{lemma}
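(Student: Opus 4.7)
The plan is to observe that Lemma \ref{lemma 4.1} is essentially a repetition of Lemma \ref{10} in the setting of the deformation family $L_{s,t}$; since the Legendrian mean curvature flow equation is the same regardless of whether the initial surface comes from a fixed $L_0$ or from a perturbation $L_s$, the combination of Lemma \ref{8} and Lemma \ref{9} applies verbatim. I will therefore run those two estimates in sequence and choose $\tau$ as the minimum of the time windows they produce.

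First, I would invoke Lemma \ref{9}: since $L_s \in \mathcal{A}(\kappa, r, \Lambda, \epsilon)$ means in particular $|A|(0)\le\Lambda$ and $|H|(0)\le\epsilon$, Lemma \ref{9} gives a time $T_1 = T_1(n,\Lambda,K_0,K_1)$ on which the bounds $|A|(t) \le 2\Lambda$ and $|H|(t) \le 2\epsilon$ persist. This handles the curvature parts of the $\mathcal{A}$-class definition.

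Next, I would invoke Lemma \ref{8} applied to the solution $L_{s,t}$. Using the doubled curvature bounds from the previous step, the exponent $E(t)=\int_0^t \max_{L_{s,r}}(|A||H|+|H|^2)\,dr$ satisfies $E(t) \le (2\Lambda\cdot 2\epsilon + 4\epsilon^2)t$, which is continuous in $t$ and vanishes at $t=0$. Consequently, one can choose $\tau \le T_1$ so small that $e^{-(n+1)E(\tau)} \ge \tfrac{1}{2}$, giving the noncollapsing bound $\mathrm{Vol}(B_t(p,\rho)) \ge \tfrac{1}{2}\kappa\,\rho^n$ for $\rho \le r$ on the interval $[0,\tau]$. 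Setting $\tau = \tau(n,\Lambda,K_1)$ to be the minimum of the time produced by Lemma \ref{9} and the noncollapsing time above then yields $L_{s,t} \in \mathcal{A}(\tfrac{1}{2}\kappa, r, 2\Lambda, 2\epsilon)$ for all $t \in [0,\tau]$.

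There is no real obstacle here: the estimates needed are already packaged in Lemma \ref{8} and Lemma \ref{9}, and $\tau$ depends only on $n$, $\Lambda$ and the ambient curvature bounds $K_0,K_1$ because those are the only parameters entering the Lemmas. The only subtle point to mention is that the time bound must be uniform in $s$, which is automatic because none of the constants $c_1(n), c_2(n,K_0), c_3(n,K_1)$ used inside Lemma \ref{9}, nor the geometric constant $n+1$ in the noncollapsing estimate, depend on the particular initial Legendrian submanifold chosen from $\mathcal{A}(\kappa,r,\Lambda,\epsilon)$. For this reason, the proof can safely be omitted or reduced to a reference to Lemma \ref{10}.
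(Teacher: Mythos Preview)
Your proposal is correct and matches the paper's own approach exactly: the paper does not give a separate proof for Lemma~\ref{lemma 4.1} but simply states that it follows from Lemma~\ref{8} and Lemma~\ref{9}, which is precisely what you spell out (and, as you note, is identical in content to Lemma~\ref{10}). The only cosmetic point is that your noncollapsing time bound appears to depend on $\epsilon$, but since $|H|\le \sqrt{n}\,|A|\le \sqrt{n}\,\Lambda$ one may absorb $\epsilon$ into a constant depending on $n$ and $\Lambda$, recovering the stated dependence $\tau=\tau(n,\Lambda,K_{1})$.
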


Then by applying the same argument as in the proof of Lemma \ref{16} and using
the results in Lemma \ref{lemma 4.0}, we can obtain the following crucial Lemma.

\begin{lemma}
\label{lemma 4.2} For any $\kappa_{0},$ $r_{0},$ $\Lambda_{0},$ $V_{0},$
$T_{0}>0$, there exists $\epsilon_{0}=\epsilon_{0}(\kappa_{0},r_{0}%
,\Lambda_{0},V_{0})$ and $\delta_{0}>0$ such that if the solution $L_{s,t},$
$t\in\lbrack0,T_{0}],$ of the Legendrian mean curvature flow satisfies
\[%
\begin{array}
[c]{l}%
1.\text{ }L_{s}\in\mathcal{A}(\kappa_{0},r_{0},\Lambda_{0},\epsilon
_{0})\text{\ and \textrm{Vol}}(L_{s})\leq V_{0},\\
2.\text{ }L_{s,t}\in\mathcal{A}(\frac{1}{3}\kappa_{0},r_{0},6\Lambda
_{0},2\epsilon_{0}^{\frac{1}{n+2}}),\text{ }t\in\lbrack0,T_{0}].
\end{array}
\]
Then we have\newline(1) The mean curvature vector satisfies
\[%
\begin{array}
[c]{c}%
\max_{L_{s,t}}|H_{s,t}|\leq\epsilon_{0}^{\frac{1}{n+2}}e^{-\frac{\delta_{0}%
}{n+2}t},\text{\ }t\in\lbrack\tau,T_{0}].
\end{array}
\]
(2) The second fundamental form
\[%
\begin{array}
[c]{c}%
\max_{L_{s,t}}|A_{s,t}|\leq3\Lambda_{0},\text{ }t\in\lbrack0,T_{0}].
\end{array}
\]
(3) $L_{s,t}$ is $\frac{2}{3}\kappa_{0}$-noncollapsed on the scale $r_{0}$ for
$t\in\lbrack0,T_{0}]$.\newline Thus, the solution $L_{s,t}\in\mathcal{A}%
(\frac{2}{3}\kappa_{0},r_{0},3\Lambda_{0},\epsilon_{0}^{\frac{1}{n+2}})$ for
$t\in\lbrack0,T_{0}]$, and by using Lemma \ref{lemma 4.1}, we obtain that
\[%
\begin{array}
[c]{c}%
L_{s,t}\in\mathcal{A}(\frac{1}{3}\kappa_{0},r_{0},6\Lambda_{0},2\epsilon
_{0}^{\frac{1}{n+2}}),\text{ }t\in\lbrack0,T_{0}+\delta]
\end{array}
\]
for some\ $\delta=\delta(n,\Lambda_{0},K_{1})>0.$
\end{lemma}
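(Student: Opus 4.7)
The plan is to adapt the proof of Lemma \ref{16} to this deformation setting, replacing the direct eigenvalue gap coming from $\lambda_1 \geq K+2+\delta_0$ by the inequality from Lemma \ref{lemma 4.0}, which is the crucial new ingredient at the limit eigenvalue $\lambda_1=K+2$. First, from assumption 2 together with Lemma \ref{lemma 4.0} applied to the solution $L_{s,t}$ (whose curvature $|A|$ is bounded by $6\Lambda_0$ and whose mean curvature is bounded by $2\epsilon_0^{1/(n+2)}$, both as small as we wish), we obtain a positive constant $\delta_0$ and the differential inequality $\frac{d}{dt}\int_{L_{s,t}}|H_{s,t}|^2\,d\mu_{s,t}\leq -2(\delta_0-6\Lambda_0\cdot 2\epsilon_0^{1/(n+2)})\int_{L_{s,t}}|H_{s,t}|^2\,d\mu_{s,t}$. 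Choosing $\epsilon_0$ small so that the bracket exceeds $\delta_0/2$, and using $\mathrm{Vol}(L_{s,t})\leq \mathrm{Vol}(L_s)\leq V_0$ (the volume is monotone by (\ref{V})), Gronwall gives the $L^2$ decay $\int_{L_{s,t}}|H_{s,t}|^2\,d\mu_{s,t}\leq V_0\,\epsilon_0^2\,e^{-\delta_0 t}$ on $[0,T_0]$.

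For part (1), I would next upgrade this $L^2$ bound to a pointwise bound. By Theorem \ref{12}(1) applied on the interval $[0,T_0]$ (where $|A|\leq 6\Lambda_0$ is bounded and $M$ has bounded geometry via $K_5$), the higher derivative $|\nabla A|$, and hence $|\nabla H|$, is uniformly controlled on $[\tau,T_0]$ by a constant depending on $(n,\Lambda_0,K_2,\tau)$. Combining this with the non-collapsing assumption and the Nash--Moser-type interpolation in Lemma \ref{13} applied to $S=H_{s,t}$ yields $\max_{L_{s,t}}|H_{s,t}|\leq \bigl(\tfrac{3}{\sqrt{\kappa_0}}+C_1\bigr)\bigl(V_0\epsilon_0^2 e^{-\delta_0 t}\bigr)^{1/(n+2)}$, which after a further shrinking of $\epsilon_0$ gives the claimed exponential decay $\max_{L_{s,t}}|H_{s,t}|\leq \epsilon_0^{1/(n+2)}e^{-\delta_0 t/(n+2)}$ on $[\tau,T_0]$.

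Part (2) follows by integrating the evolution inequality (\ref{65}) for $|A|$ in time. Since (\ref{65}) estimates $\frac{\partial}{\partial t}|A|\leq |\nabla^2 H|+c(n)|A|^2|H|+(K_0+3)|H|$, I bound $|\nabla^2 H|$ by applying Theorem \ref{12} to control $|\nabla^4 H|$, integrating by parts to turn $\int |\nabla^2 H|^2$ into $\int |H||\nabla^4 H|$, and then using Lemma \ref{13} once more to get $|\nabla^2 H|\lesssim (V_0\epsilon_0^{1/(n+2)}e^{-\delta_0 t/(n+2)})^{1/(n+2)}$. Integrating in $t$ and using that both $|H|$ and $|\nabla^2 H|$ decay like fixed positive powers of $\epsilon_0$ (with time integrals bounded by $1/\delta_0$), we obtain $|A_{s,t}|\leq 2\Lambda_0+O(\epsilon_0^{1/(n+2)^2})\leq 3\Lambda_0$ on $[0,T_0]$ provided $\epsilon_0$ is small. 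For part (3), the $\kappa$-noncollapsing estimate of Lemma \ref{8} reduces to showing $E(t)=\int_0^t \max_{L_{s,\sigma}}(|A||H|+|H|^2)\,d\sigma\leq \frac{1}{n+1}\log\frac{3}{2}$; splitting the integral as $[0,\tau]\cup[\tau,T_0]$ and using the exponential decay in (1) makes this integral $O(\epsilon_0^{1/(n+2)})$, which is acceptable after shrinking $\epsilon_0$.

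Putting the three parts together gives $L_{s,t}\in \mathcal{A}(\tfrac{2}{3}\kappa_0,r_0,3\Lambda_0,\epsilon_0^{1/(n+2)})$ on $[0,T_0]$, and then Lemma \ref{lemma 4.1} with parameters $(\tfrac{2}{3}\kappa_0,r_0,3\Lambda_0,\epsilon_0^{1/(n+2)})$ pushes the stronger enclosure in $\mathcal{A}(\tfrac{1}{3}\kappa_0,r_0,6\Lambda_0,2\epsilon_0^{1/(n+2)})$ to an interval $[0,T_0+\delta]$, closing the continuity argument. The main obstacle in implementing this plan is the step requiring Lemma \ref{lemma 4.0} to be applicable uniformly along the flow: one must be careful that the essential Legendrian variation assumption on $L_s$ (inherited from $X$ via $\|\varphi_s-\varphi_0\|_{C^3}\leq\epsilon_0$) persists quantitatively through the flow so that the quadratic-form coercivity $\int|\Delta\alpha_{s,t}|^2 \geq (K+2+\delta_0)\int|\nabla\alpha_{s,t}|^2$ holds with the same $\delta_0$ for all $t\in[0,T_0]$; all subsequent estimates are then routine interpolation and Gronwall.
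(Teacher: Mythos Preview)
Your proposal is correct and follows essentially the same approach as the paper: the paper itself does not write out a detailed proof of this lemma, stating only that one applies ``the same argument as in the proof of Lemma~\ref{16}'' with Lemma~\ref{lemma 4.0} supplying the $L^2$ decay in place of the direct use of $K+2<0$, and your write-up fills in precisely those details (Gronwall for $L^2$ decay via (\ref{407}), Theorem~\ref{12} plus Lemma~\ref{13} for the pointwise bound on $H$, integration of (\ref{65}) with the $|\nabla^2 H|$ interpolation for $|A|$, and Lemma~\ref{8} for noncollapsing). Your caveat about the uniform applicability of Lemma~\ref{lemma 4.0} along the flow is already addressed by the way that lemma is stated and proved in the paper: it is formulated for the whole flow $L_{s,t}$ on $[0,T_0]$ under the curvature bounds (\ref{405}), and its contradiction/compactness argument yields a single $\delta_0$ valid for all $t\in[0,T_0]$.
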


{\large Proof of Theorem \ref{Thm 4.0}:}

\begin{proof}
Suppose that $L_{s}\in\mathcal{A}(\kappa_{0},r_{0},\Lambda_{0},\epsilon_{0})$
for any positive constants $\kappa_{0},$ $r_{0},$ $\Lambda_{0}$ and
$\epsilon_{0}$ small enough. Set%
\[%
\begin{array}
[c]{c}%
t_{0}=\sup\{t>0|\text{ }L_{s,l}\in\mathcal{A}(\frac{1}{3}\kappa_{0}%
,r_{0},6\Lambda_{0},2\epsilon_{0}^{\frac{1}{n+2}}),\text{ }l\in\lbrack0,t)\}.
\end{array}
\]
In the following, we will show that $t_{0}=\infty$. Suppose that $t_{0}%
<\infty$. By Lemma \ref{lemma 4.2}, we know that%
\[%
\begin{array}
[c]{c}%
L_{s,t}\in\mathcal{A}(\frac{2}{3}\kappa_{0},r_{0},3\Lambda_{0},\epsilon
_{0}^{\frac{1}{n+2}}),
\end{array}
\]
for $t\in\lbrack0,t_{0})$ and $\epsilon_{0}=\epsilon_{0}(\kappa_{0}%
,r_{0},\Lambda_{0},V_{0},K_{5})>0$. By using Lemma \ref{lemma 4.2} again, the
solution $L_{s,t}$ can be extended to $[0,t_{0}+\delta]$ and $L_{s,t}$ is
contained in $\mathcal{A}(\frac{1}{3}\kappa_{0},r_{0},6\Lambda_{0}%
,2\epsilon_{0}^{\frac{1}{n+2}})$ which contradicts the definition of $t_{0}$.
So we know that $t_{0}=\infty$ and%
\[%
\begin{array}
[c]{c}%
L_{s,t}\in\mathcal{A}(\frac{1}{3}\kappa_{0},r_{0},6\Lambda_{0},2\epsilon
_{0}^{\frac{1}{n+2}}),
\end{array}
\]
for $t\in\lbrack0,\infty).$ By using Lemma \ref{lemma 4.2}, we obtain that the
mean curvature vector will decay exponentially to zero and the flow will
converge to a smooth minimal Legendrian submanifold. This completes the theorem.
\end{proof}

\end{document}